%% file: CMCLiouvilleMain.tex
\documentclass[12pt,reqno]{amsart}
\usepackage{hyperref}
\usepackage{upgreek}
\usepackage[mathscr]{euscript}

\usepackage{libertine}

\usepackage{enumitem}
\usepackage{xcolor} 
\usepackage{tikz}
\usetikzlibrary{shapes,arrows}

\newcommand\fsu{\mathfrak{su}}

\DeclareRobustCommand{\rchi}{{\mathpalette\irchi\relax}}
\newcommand{\irchi}[2]{\raisebox{\depth}{$#1\chi$}}

\newcommand\defem{\sl}

\newcommand\wbar{\overline{w}}
\newcommand\pbar{\overline{p}}

\newcommand\qbar{\overline{q}}

\def\Winf{W^{(\infty)}}
\def\Rinf{R^{(\infty)}}
\def\np{r}
\def\CCR{R}
\newcommand\scrU{\mathscr U}
\newcommand\bbar{\overline{b}}
\newcommand{\pibars}[1]{\overline{\pi^{#1}}}
\newcommand\sysJU{{\mathcal J}\vert_{\mathscr U}}

\newcommand\muG{\mu}

\newcommand\bq{\Pi}
\def\prW{\widehat{W}}
\def\prV{\widehat{\widetilde{V}}}
\def\prR{\widehat{R}}
\def\prRinf{\widehat{R}^{(\infty)}}

\def\rhoSO{\rho}
\def\rhoSU{\rho_{SU(2)}}

\newcommand\sfj{\mathrm j}

\newcommand\di{\partial}

\newcommand\momega{\upomega}

\newcommand\momegabar{\overline{\momega}}

\newcommand\Borel{\mathbb B}
\newcommand\R{\mathbb R}
\newcommand\CP{\mathbb C\mathrm{P}}
\newcommand\ri{\mathrm{i}}
\newcommand\re{\mathrm{e}}
\newcommand\meta{\upeta}
\newcommand\metabar{\overline{\upeta}}

\newcommand\F{{\mathscr F}}
\newcommand\Lie{{\mathcal L}}

\newcommand\calD{\mathcal D}

\newcommand\sysI{\mathcal I}
\newcommand\sysICMC{\sysI_{CMC}}
\newcommand\tilsysICMC{\widetilde{\sysI}_{CMC}}
\newcommand\sysMCMCpr{pr(\mathcal M_{CMC})}

\newcommand\rd{\operatorname{d}\!} 

\newcommand\sysJ{\mathcal J}
\newcommand\sysBpr{\mathcal J}

\newcommand\sysE{\mathcal E}
\newcommand\sysH{\mathcal H}
\newcommand\sysM{\mathcal M}
\newcommand\sysB{\mathcal B}
\newcommand\sysMCMC{\mathcal M_{CMC}}

\newcommand\tilsysMCMC{\widetilde{\mathcal M}_{CMC}}

\newcommand\tilV{\widetilde{V}}

\newcommand\muS{\mu}

\newcommand\sysEpr{pr(\mathcal E)}

\newcommand\diff{\text{diff}}
\newcommand\alg{\text{alg}}
\renewcommand\span{\operatorname{span}}
\newcommand\spanC{\operatorname{span}_{\mathbb C}}
\newcommand\spanR{\operatorname{span}_{\mathbb R}}

\newcommand\HT{{\mathrm H^3 }}
\newcommand\J{{\mathrm J}}

\newcommand\vecC{{\mathsf X}}  
\newcommand\vecCY{{\mathsf Y}}
\newcommand\hatvecCY{\widehat{\mathsf Y}}
\newcommand\Ofour{{\mathcal O(4)}}   

\newcommand\Cq{{\pi}} 

\newcommand\qpt{{\mathrm q}} 
\newcommand\upt{{\mathrm u}}

\newcommand\realversion{{\rm real version \ }}
\renewcommand{\Re}{\operatorname{Re}}
\renewcommand{\Im}{\operatorname{Im}}

\newcommand{\ve}{\mathsf{e}}

\newcommand{\uve}[1]{\underline{\ve_{#1}}}

\newtheorem{thm}{Theorem}[section]
\newtheorem{lem}[thm]{Lemma}
\newtheorem{corollary}[thm]{Corollary}
\newtheorem{remark}{Remark}

\newcommand\reals{{\rm I \! R }}  

\newcommand\C{{\mathbb C}}

\DeclareFontFamily{U}{mathx}{}
\DeclareFontShape{U}{mathx}{m}{n}{<-> mathx10}{}
\DeclareSymbolFont{mathx}{U}{mathx}{m}{n}
\DeclareMathAccent{\widehat}{0}{mathx}{"70}
\DeclareMathAccent{\widecheck}{0}{mathx}{"71}
\newcommand{\Cm}{{\C^{\times}}}

\begin{document}
\title[The Darboux Integrability of the Constant Mean Curvature One Equation]{On the Darboux Integrability of the Constant Mean Curvature One Equation for Surfaces in Hyperbolic 3 space}

\author{Mark E. Fels}
\address{Department of Mathematics and Statistics, Utah State University}
\email{mark.fels@usu.edu}
\author{Thomas A. Ivey}
\address{Dept. of Mathematics, College of Charleston}
\email{iveyt@cofc.edu}
\date{\today}

\begin{abstract}  We show, using group theoretic constructions, that the system of elliptic partial differential equations whose solutions determine the constant mean curvature immersions into hyperbolic 3 space are equivalent to the elliptic Liouville equation.  In particular, the Darboux integrability of each equation shows
that each of these equations admit an equivalent quotient representation which descends to an equivalence of the equations.  The explicit map providing the equivalence is given.
\end{abstract}
\maketitle

\input introCMC1.tex

\input section3.tex

\input section4.tex

\input appendix.tex

\input biblio.tex

\end{document}

%% file: introCMC1.tex
\section{Introduction}


One form of the elliptic Liouville equation is the PDE
\begin{equation}
U_{xx} + U_{yy} + 2  {\rm e}^U =0
\label{LiouEq}
\end{equation}
whose  general solution can be expressed in terms of an arbitrary holomorphic function $f(z)$ by
\begin{equation}
U =2 \log \frac {2|f'(z)|}{1+|f(z)|^2} .
\label{Liousol}
\end{equation}
The general solution to the PDE for surfaces of constant mean curvature one (CMC-1) in 
3-dimensional hyperbolic space 
$\HT$ also admits a closed form solution in terms a of holomorphic function; see, e.g., Theorem 1.1 in  \cite{LeviCMC}. The  fact that the general solution to the Liouville and the CMC-1 equations can be can be expressed in terms of holomorphic functions is directly related to the fact that both of these equations are Darboux integrable. Specifically, as shown in \cite{FelsIvey}, every Darboux integrable elliptic system admits what we call a {\it quotient representation} and this in this turn leads to the property that the solutions or integral manifolds of a Darboux integral elliptic system can be expressed in terms of holomorphic data. In this note we will show, using the quotient representation of these two equations, that they are equivalent (see Theorem \ref{LCMC}). This demonstrates how the quotient representation of Darboux integrable systems can be used not only to find solutions to differential equations in terms of holomorphic data but to also determine non-trivial equivalences and are fundamental in their geometric properties. Other applications of the quotient representation in the case of hyperbolic Darboux integrable systems can be found in \cite{AFB}, \cite{AFCP}; a detailed comparison of elliptic and hyperbolic systems and their quotient representations is given in \cite{FelsIveyArchive}.

In order to describe the quotient representation of a Darboux integrable elliptic system,  we must first define the quotient of an exterior differential system $\sysE$ on a manifold $N$ by a Lie group $G$ acting on $N$ by symmetries of
$\sysE$. Assume $\pi:N \to N/G$ is a smooth submersion of manifolds where $N/G$ is quotient space of orbits of $G$, The quotient $\sysE/G$ is the exterior differential system
\begin{equation}
\sysE/G = \{ \theta \in \Omega^*(N/G) \ | \  \pi^* \theta \in \sysE \ \}.
\label{Qsys}
\end{equation}
See \cite{AF1}, \cite{AFB}, \cite{AFV} for some properties and examples of quotient systems.

Suppose now that $(M,{\mathcal I})$ defines an elliptic Darboux integrable system.  Then the main result in \cite{FelsIvey} asserts that there exists a complex manifold $N$ and a holomorphic Pfaffian system ${\mathcal H}$ 
on $N$ that admits a complex Lie group of symmetries $K$ acting freely on $N$, and real form $G \subset K$ such that
$$
(M,{\mathcal I}) = (N/G, \sysE /G).
$$
where 
$\sysE$ is the \realversion of ${\mathcal H}$, i.e., the real Pfaffian which is generated by the real and imaginary parts of 1-forms in ${\mathcal H}$ and which has real rank twice the complex rank of ${\mathcal H}$.  
We say $(N,{\mathcal H}, K,G)$ defines the quotient representation of $(M,{\mathcal I}) $.   The quotient mapping 
$\pi:N \to N/G \equiv M$ takes holomorphic integral manifolds of ${\mathcal H}$ surjectively (locally) to integral manifolds in ${\mathcal I}$. The quotient representation leads to the closed form solution \eqref{Liousol} to the Liouville equation as well as the closed form solution to CMC-1 given in \eqref{IntroCMC} below (see also \cite{LeviCMC}).  
The equivalence of the Liouville equation and constant mean curvature one equation for surfaces in $\HT$ utilizes 
the following construction. 
Let $(N_1,{\mathcal H}_1, K_1,G_1)$ define the quotient representation of the  CMC-1 surface equation in $\HT$ where ${\mathcal I}_{CMC}=\sysE_1/G_1$ is the Darboux integrable Pfaffian system whose integral manifolds correspond to constant mean curvature immersion. Then
let $(N_2,{\mathcal H}_2, K_2,G_2)$
 define the quotient representation for the Darboux integrable  Pfaffian system  ${\mathcal I}_{Liou} $ representing Liouville's equation given in \cite{FelsIvey} and so ${\mathcal I}_{Liou} =\sysE_2/G_2$. 
 We show in Theorem \ref{EquivT} that $K_1=K_2=K$, $G_1=G_2=G$ and that there exists  a $K$-equivariant biholomorphic map $\Psi :N_1\to N_2$ such $\Psi^* {\mathcal H}_2= {\mathcal H_1}$. This induces a diffeomorphism $\psi:N_1/G \to N_2 /G$ such that $\psi^* {\mathcal E_2}/G = {\mathcal E_1}/G$ giving rise to the commutative diagram
\begin{equation} 
\begin{gathered}
\tikzstyle{line} = [draw, -latex', thick]
\begin{tikzpicture}
\node(SLCP) {$\left(\, N_1 ,\, \sysE_1\,  \right)$};
\node[right of=SLCP, node distance=60mm](B){$\left(\, N_2  ,\, \sysE_2 \, \right)$};
\node[below of=B, node distance=20mm](I2) {$\left(\, N_2/G ,\, \sysE_2 /G\, \right)$};
\node[left of=I2, node distance=60mm](I1) {$\left(\, N_1/G ,\,\sysE_1/G \,\right)$};
\path[line](SLCP) -- node[above] {$\Psi $} (B);
\path[line](B) -- node[left] {$\pi_2$} (I2);
\path[line](I1) -- node[below] {$\psi$} (I2);
\path[line](SLCP) -- node[left] {${\pi}_{1}$} (I1);
\end{tikzpicture}
\end{gathered}
\label{CD0}
\end{equation}
where $G$ is a real form of $K$, $\pi_1$ and $\pi_2$ are the quotient maps,  and where $\psi$ defines an equivalence of the systems. 
In particular, in Theorem \ref{EquivT}
we have $K=SL(2,\C)$, $G=SU(2)$, $\sysE_1 /G={\mathcal I}_{CMC}$,
$\sysE_2 /G={\mathcal I}_{Liou}$, and diagram \eqref{CD0} in this case is given in equation \eqref{CD1T1}. 

The equivalence of these two systems leads to a Weierstrass-type representation for CMC-1 surfaces in $\HT$.
To make this explicit, we represent hyperbolic 3-space as the quadric hypersurface in $\R^4$ given by 
\begin{equation}\label{defofH3}
\HT= \{ (x^0, x^1,x^2,x^3) \ | \  (x^0)^2 -(x^1)^2 -(x^2)^2-(x^3) ^2 = 1, \ x^0 >0\ \}, 
\end{equation}
endowed with a Riemannian metric of constant sectional curvature $-1$ given by the restriction of the quadratic form
\begin{equation}\label{defofeta}
\eta= -(\rd x^0)^2 +(\rd x^1)^2 +(\rd x^2)^2+(\rd x^3) ^2
\end{equation}
(cf. \cite{BryantCMC}). 
Then we have the following:


\begin{thm}\label{LCMC} Let $U(x,y)$ be a solution to the Liouville equation \eqref{LiouEq} then 
\begin{equation}
\begin{aligned}
x^0 & = &&\frac{1}{16}\left( (1+x^2+y^2)(U_x^2+U_y^2)+8(xU_x+yU_y)+16\right) {\rm e}^{-\frac{U}{2}} +\frac{1}{4}(1+x^2+y^2){\rm e}^{\frac{U}{2}} \\
x^1 &=&& - \frac{1}{8}\left(x(U_x^2+U_y^2)+4 U_x)  \right){\rm e}^{-\frac{U}{2}} -\frac{1}{2}x {\rm e}^{\frac{U}{2}}\\
x^2&=&&  \frac{1}{8}\left(y(U_x^2+U_y^2)+4 U_y)  \right){\rm e}^{-\frac{U}{2}} +\frac{1}{2}y {\rm e}^{\frac{U}{2}}\\
x^3 &=&&\frac{1}{16}\left( (1-x^2-y^2)(U_x^2+U_y^2)-8(xU_x+yU_y)-16\right) {\rm e}^{-\frac{U}{2}} +\frac{1}{4}(1-x^2-y^2){\rm e}^{\frac{U}{2}} 
\end{aligned}
\label{IntroCMC}
\end{equation}
defines an immersed CMC-1 hypersurface in $\HT$.  Conversely, an immersed CMC-1 surface in $\HT$ can be written locally as the image of a solution to the Liouville equation under the map in equation \eqref{IntroCMC}. 
\end{thm}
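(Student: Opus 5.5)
The approach is a direct one that uses Bryant's representation together with the Liouville solution formula, rather than the full quotient-representation machinery. I will then indicate how the result also follows from Theorem \ref{EquivT}.

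\emph{Step 1.} Write the general local solution of \eqref{LiouEq} via \eqref{Liousol} as $U=2\log\bigl(2|f'|/(1+|f|^2)\bigr)$ with $f$ holomorphic and $f'\neq 0$. With $z=x+\ri y$ we have $U_x-\ri U_y=2U_z$. A direct computation gives
\[
U_z=\frac{f''}{f'}-\frac{2\bar f f'}{1+|f|^2}.
\]
Substituting this into \eqref{IntroCMC} will express $(x^0,\dots,x^3)$ in terms of $z$, $f$, $f'$, $f''$ and their conjugates.

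\emph{Step 2.} Identify $\R^4$ with $2\times2$ Hermitian matrices $X$, so that $\det X=(x^0)^2-\sum (x^i)^2$ and $\HT=\{F F^*: F\in SL(2,\C)\}$. Bryant's theorem \cite{BryantCMC} says that CMC-1 immersions are locally $FF^*$, where $F$ is a holomorphic null curve in $SL(2,\C)$ solving $F^{-1}dF=\begin{pmatrix} g & -g^2\\ 1 & -g\end{pmatrix}\omega$. The plan is to exhibit $F$ explicitly in terms of $z$ and $f$, so that the Hermitian matrix built from \eqref{IntroCMC} factors as $FF^*$. The weights $\re^{\pm U/2}$ and the polynomial factors $1\pm(x^2+y^2)$ suggest the following ansatz: the entries of $F$ are affine in $z$, with coefficients built from $(f')^{-1/2}$, $f(f')^{-1/2}$ and $f''(f')^{-3/2}$, and the Schwarzian-type combination appears in the null-curve condition. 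Concretely, I would take $F=A(z)\,B(f)$, where $A$ is unipotent in $z$ and $B$ is the standard $SL(2,\C)$ lift of the Liouville solution (the $2\times2$ matrix whose action gives $f$, normalized by $\sqrt{f'}$). I would then check two things: that $FF^*$ reproduces \eqref{IntroCMC}, and that $F^{-1}F_z$ is nilpotent, i.e.\ has vanishing determinant. Nilpotency is exactly Bryant's null condition, so the image is a CMC-1 immersion; its regularity follows from $f'\neq0$.

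\emph{Step 3 (converse).} Given a CMC-1 immersion, Bryant gives locally a null holomorphic $F$ with $X=FF^*$. Choose a conformal coordinate $z$ and, using the $SL(2,\C)$ freedom $F\mapsto FA$ (which does not change $FF^*$ when $A\in SU(2)$), invert the factorization $F=A(z)B(f)$ to recover $f$. Then define $U$ by \eqref{Liousol}. This amounts to showing that the map $(z,f,f',f'')\mapsto F$ is onto null curves modulo $SU(2)$, which is a dimension and ODE count: null curves are determined by a holomorphic function plus constants.

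The main obstacle is Step 2, namely guessing the correct factorization $F$ and verifying that $FF^*$ matches \eqref{IntroCMC} exactly. Rather than guess, I would extract $F$ from the $SL(2,\C)$-equivariant biholomorphism $\Psi$ of Theorem \ref{EquivT}. Composing $\Psi$ with the quotient maps in \eqref{CD0} gives precisely the map $\psi$, and the formula \eqref{IntroCMC} is $\psi$ written in the coordinates $(x,y,U,U_x,U_y)$ followed by projection to $\HT$. Equivariance under $K=SL(2,\C)$ and $G=SU(2)$ guarantees that the result is well defined on the quotient and carries $\sysI_{Liou}$ to $\sysICMC$. The remaining checks are then routine symbolic verifications.
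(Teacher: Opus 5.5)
Your ansatz is right. Putting $w=f$ in \eqref{bigPhi} gives
\[
F=\begin{pmatrix}1&0\\-z&1\end{pmatrix}B,\qquad B=\begin{pmatrix}(f')^{1/2}-\tfrac12 ff''(f')^{-3/2}&-\tfrac12 f''(f')^{-3/2}\\ f(f')^{-1/2}&(f')^{-1/2}\end{pmatrix}.
\]
$FF^{*}$ equals the matrix in \eqref{LtoCMC}, because $j^2f$ and $\Sigma(\Pi_{\rho_{SU(2)}}(j^2f))$ lie in one $SU(2)$-orbit and $\Psi$ is equivariant. So checking the forward direction through Bryant's Theorem A is a legitimate, more elementary substitute for the paper's computation $\psi^{-1}=\Pi_{\Delta_{SU(2)}}\circ\Phi\circ\Sigma$. (Note that \eqref{IntroCMC} is $\psi^{-1}$ followed by projection, not $\psi$.)

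The step that fails is ``regularity follows from $f'\neq0$.'' A direct computation gives
\[
F^{-1}F_z=-\tfrac12\,S(f)\,B^{-1}\begin{pmatrix}0&1\\0&0\end{pmatrix}B,\qquad S(f)=\frac{f'''}{f'}-\frac32\left(\frac{f''}{f'}\right)^2=U_{zz}-\tfrac12U_z^2.
\]
So $F$ is a null \emph{immersion} only where the Schwarzian is nonzero. For $f(z)=z$, i.e.\ $U=2\log\frac{2}{1+x^2+y^2}$, you get $F\equiv I$. One checks directly that \eqref{IntroCMC} is then the constant map to $(1,0,0,0)$, and isolated zeros of $S(f)$ give branch points.

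The converse breaks in Step 3, because $z$ is not a conformal coordinate you are free to choose. The columns of $F_z$ are multiples of $(1,-z)^T$, so $[F_{z,11}:F_{z,21}]=[1:-z]$. This is Bryant's hyperbolic Gauss map $G$. It is unchanged by $F\mapsto FR$ for constant $R\in SU(2)$, and it is exactly the Darboux invariant of Lemma \ref{DI1def}; the paper's $\hat\phi_1$ in \eqref{IMCMC} has first column proportional to $(1,-z)^T$. Hence a CMC-1 surface can come from \eqref{IntroCMC} only where $dG\neq0$, and then $z=-1/G$ is forced.

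Horospheres violate this. For them $h_{ij}=\delta_{ij}$, so $d(\mathsf{e}_0+\mathsf{e}_3)=0$ by \eqref{deees} and $G$ is constant. Concretely, $F=\begin{pmatrix}1&0\\t&1\end{pmatrix}$ is a null immersion whose $FF^*$ is the horosphere $x^0+x^3=1$, and no piece of it is the image of a Liouville solution. Your dimension count cannot see this.

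These are defects of the statement, not only of your write-up. The paper's argument has the same hole: its equivalence $\psi$ of Monge--Amp\`ere systems does not carry the independence condition $\omega^1\wedge\omega^2\neq0$ of $\mathcal M_{CMC}$ to $dz\wedge d\bar z\neq0$ of $\mathcal M_{Liou}$. What can actually be proved is the theorem with two added hypotheses: $U_{zz}-\tfrac12U_z^2\neq0$ in the forward direction, and $dG\neq0$ (taking $z=-1/G$) in the converse.
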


The solutions to the CMC-1 given equation in Theorem \ref{LCMC} are obtained by applying the inverse of the map $\psi$ 
 in equation \eqref{CD1T1}  to solutions of the Liouville equation. The details are given in Corollary \ref{PT1}. Equation \eqref{IntroCMC} provides an alternative way to express the solutions to the CMC-1 equations in terms of  a holomorphic function by simply substituting for $U$ from equation \eqref{Liousol} in \eqref{IntroCMC} (or equation \eqref{RIM}). This produces an alternate Weierstrass-type representation of the solutions to the CMC-1 equation as given in \cite{LeviCMC}.


A similar result to Theorem \ref{LCMC} can be derived for CMC-1 surfaces in DeSitter space with $SU(2)$ being replaced with $SU(1,1)$ and the  Liouville equation \eqref{LiouEq} being replaced with the inequivalent ``minus'' form of the equation (see \cite{FelsIvey} for a discussion of the two inequivalent forms). 


\subsubsection*{Acknowledgements}
The authors would like to thank Ian Anderson, Andreas Malmendier and Peter Crooks of the department of mathematics and statistics at Utah  State University for useful discussions.

\subsection*{Group Action and Quotient Conventions}

Let $G$ act on the right on a set $X$ by $\mu:X\times G \to X$. Denote the equivalence classes of the orbits by $X/{\muG}$ and the projection $\pi_{\muG}: X \to X/{\mu}$. 
The orbit of $x\in X$ will be denoted $x \cdot G=\mu(x,G)$. 

Given a $G$-invariant function $f:X\to Y$ there exists a canonically defined function 
 $\lambda :X/{\mu}\to Y$ such that $f=\lambda \circ \pi_{\mu} $.  If $\lambda$ is a bijection then
we say that $f:X\to Y$ {\em represents} the quotient map. By an abuse of terminology we will often refer
to this as the quotient map, and denote it by $\Pi_{\mu}$.

Let $H\subset G$ be a closed Lie subgroup and let $G/H$ be the differentiable manifold of left cosets. We denote an
element of $G/H$ by $a \cdot H$ for $a\in G$, and we denote the quotient map to the space of cosets by $\pi_H: G \to G/H$. As above, we use $\Pi_H:G\to Y$ denote a representation of this quotient map.

An integrable distribution $\rchi^{}_{} \subset TM$ on the differentiable manifold $M$ defines the equivalence relation $x \cong y$ if $x$ and $y$ lie on the same maximal connected integral manifold of $\rchi^{}_{}$. We denote by $M/\rchi^{}_{}$ the set of equivalence classes and let $\pi_{\rchi^{}_{}}:M\to M/\rchi^{}_{}$ be quotient map to equivalence classes. Again we use $\Pi_{\rchi^{}_{}}:M \to Y$ for a representation of the quotient by an integral distribution. 

\subsection*{Differential Systems}

 We use the conventions in \cite{FelsIvey} for differential and Pfaffian systems. 
An {\defem exterior differential system} (EDS) on a smooth manifold $M$ is a graded ideal $\sysI$ 
(with respect to wedge product) inside the ring of differential forms on $M$, 
such that $\sysI$ is also closed under exterior differentiation.  
We assume that systems do not include 0-forms, and we assume that for $1\le k \le \dim M$ 
the $k$-forms of $\sysI$ span a vector sub-bundle $I_k$ of constant rank inside $\Lambda^k T^*M$.  

{\defem Pfaffian systems} are  generated by sections of a given sub-bundle $I=I_1$ of $T^*M$ and their exterior derivatives. (We will sometimes refer to $I$ itself as a Pfaffian system.)
The {\defem first derived system} $I'\subset T^*M$ of $I$ is spanned by sections $\theta$ of $I$ which satisfy $d\theta \in C^\infty( I \wedge T^*M)$.  When $M$ is a complex manifold, a {\defem holomorphic Pfaffian system} $\sysH$ is generated by holomorphic sections of a holomorphic sub-bundle $H \subset T^*_{1,0}M$. 

{\defem Monge-Amp\'ere systems} are systems $\sysI$ on a 5-dimensional manifold which are algebraically generated by a section of a given rank 1 sub-bundle $I_1 \subset T^*M$ (with Pfaff rank 5) and a pair of independent 2-forms spanning a rank 2 sub-bundle $I_2 \subset \Lambda^2 T^*M$.  See \cite{CfB2}. The CMC-1 and Liouville equations will be encoded by Monge-Amp\`ere systems while their prolongations are Pfaffian systems on 7-dimensional manifolds.



\section{Hyperbolic 3-Space and the CMC-1 System}
In this section we formulate the CMC-1 condition in $\HT$.  
Following Bryant \cite{BryantCMC} we use moving frames to describe an exterior differential system (EDS) with independence condition whose integral surfaces correspond to oriented surfaces in $\HT$ with constant mean curvature one.  In what follows, we take $\HT$ to be the hypersurface defined by \eqref{defofH3}, and use $\langle\, ,\, \rangle$ to denote the Minkowski inner product on $\R^4$ associated to the quadratic form $\eta$ in \eqref{defofeta}.

Let $\F$ denote the set of oriented bases $(\ve_0, \ve_1, \ve_2, \ve_3)$ of $\R^4$ which are orthonormal with respect to $\langle\, ,\, \rangle$. In detail, 
$\ve_0, \ve_1, \ve_2, \ve_3$ form the columns of a unimodular $4\times 4$ matrix, with 
$\langle \ve_i,\, \ve_j\rangle =\delta_{ij}$ for $i,j=1,2,3$, while 
$\langle \ve_i,\, \ve_0 \rangle=0$ and
$\langle \ve_0,\, \ve_0 \rangle=-1$. 
(Conventionally, $\ve_1, \ve_2, \ve_3$
are spacelike while $\ve_0$ is negative timelike.)
We also let $(\uve0, \uve1, \uve2, \uve3)$ denote the standard basis for $\R^4$, whose members satisfy the same relations.  
Again, by taking $\ve_0, \ve_1, \ve_2, \ve_3$ as columns of a matrix, it is easy to see that $\F$ is identified with $SO^+(3,1)$, the identity component of $SO(3,1)$.

For $0\le a,b \le 3$ define 1-forms $\omega^a, \omega^a_b = -\omega^b_a$ on ${\mathcal F}$ in the usual way:
\begin{equation}
\label{deees} \rd \ve_0 = \ve_a \omega^a, \qquad \rd \ve_a = \omega^a \ve_0 + \omega^b_a \ve_b, \qquad 1\le a,b \le 3.
\end{equation}
(The summation convention will be used throughout.) The 1-forms $\omega^1$, $\omega^2$, $\omega^3$
$\omega^2_1$, $\omega^3_1$, $\omega^3_2$ are pointwise linearly independent and define a coframe on $\F$.  Under the identification with $SO^+(3,1)$, these 1-forms are the components of the left-invariant Maurer-Cartan form, and satisfy structure equations
\begin{equation}\label{Fstructures}
\rd \omega^a = -\omega^a_b \wedge \omega^b, \qquad \rd \omega^a_b = -\omega^a_c \wedge \omega^c_b - \omega^a \wedge \omega^b,
\end{equation}
where $0\le c \le 3$ as well.  
For future reference, we let $\di_{\omega^a}$, $\di_{\omega^a_b}$ denote the dual left-invariant vector fields.

Note that the vector-valued function $\ve_0: \F \to \R^4$ takes value in $\HT$.  By taking $\ve_0$ as the $\R^4$-valued basepoint map, we see that $\F$ is the oriented orthonormal frame bundle of $\HT$. 
Any immersed oriented surface $s:M \to \HT$ equipped with a smooth oriented orthonormal frame $\ve_1, \ve_2$ for the tangent space to the surface gives a unique lift $\hat{s}$ to $\F$.  (For, if $\ve_1, \ve_2$ span the tangent spaces at $\ve_0$, there is a  unique $\ve_3$ -- the oriented unit normal to the surface -- such that $(\ve_0, \ve_1, \ve_2, \ve_3) \in SO^+(3,1)$; this gives the pointwise value of the lift.)  Since 
$\hat{s}^*\!\rd \ve_0$ is at every point in the span of $\ve_1, \ve_2$, by \eqref{deees} the image of $\hat s$ is an integral surface of the 1-form $\omega^3$ (i.e., $\hat{s}^* \omega^3=0$).  Since $s$ has rank two then $s^* \omega^1 \wedge\omega^2 \ne 0$ at each point of $M$.  

Of course, for a given immersed surface the choice of orthonormal frame is not unique because the orthonormal frame  can be modified by rotating $\ve_1, \ve_2$ through a smoothly-varying angle at each point.  Such rotations modify the lift $\hat{s}$ by motion along the orbits of the 1-parameter group generated by the vector field $\vecC=\di_{\omega^2_1}$.   In fact, under the identification $\F \cong SO(3,1)^+$ this action corresponds to right-multiplication by the subgroup $SO(2) \subset SO(3,1)^+$ defined by
\begin{equation}
SO(2)=\left\{ S_\theta = \left[ \begin{matrix} 1 & 0 &0 &0  \\ 0 & \cos \theta & - \sin \theta & 0 \\ 0 & \sin  \theta & \cos  \theta & 0  \\
0 & 0 & 0 & 1  
 \end{matrix} \right] \ , \ \theta \in [0, 2 \pi] \right\} 
\label{defSOC}
\end{equation}
and we refer to this as the {\em $SO(2)$-action} on $\F$.


We note that, since the structure equations imply that $\Lie_\vecC \omega^2_1=0$, the `horizontal' distribution $\Delta$  annihilated by $\omega^2_1$, 
\begin{equation}
\Delta=\{\, \omega^2_1\, \}^\perp= \span_{\R} \{ \partial_{\omega^1}, \partial_{\omega^2}, \partial_{\omega^3},\partial_{\omega^3_1}, \partial_{\omega^3_2} \} ,
\label{defADelta}
\end{equation}
is invariant under the $SO(2)$ action on $\F$.

Since 
$$\rd \omega^3 = -\omega^3_1 \wedge \omega^1 - \omega^3_2 \wedge \omega^2$$
from the structure equations, the fact that $\hat s^*\!\rd\omega^3=0$ implies that there are functions
$h_{ij}=h_{ji}$ for $1\le i,j \le 2$ such that $\hat s^*(\omega^3_i - h_{ij}\omega^j)=0$.  These $h_{ij}$
are the components of the second fundamental form of the immersed surface, and in particular the mean curvature is $\tfrac12 (h_{11} + h_{22})$.   It follows that if $M$ has mean curvature one then the image of the lift is an integral surface of the following EDS on $\F$
\begin{equation}
\sysM = \{ \omega^3, \omega^3_1 \wedge \omega^1 + \omega^3_2 \wedge \omega^2, \omega^3_1 \wedge \omega^2 - \omega^3_2 \wedge \omega^1 - 2 \omega^1 \wedge \omega^2 \}_\alg,
\label{CMCeds}
\end{equation}
with independence condition $\omega^1 \wedge \omega^2 \neq 0$.
Conversely, any integral manifold of $\sysM$ satisfying the independence condition defines a CMC-1 immersion into $\HT$ by applying the basepoint map $\ve_0$ to produce the immersed surface. 
The differential system $\sysM$ admits a Cauchy characteristic determined by the infinitesimal generator of the $SO(2)$ action on the right of $\F$ (see \eqref{defSOC}), so that the Cauchy characteristic distribution is 
\begin{equation}
\rchi_{\! \scriptstyle{\sysM}} = \span_\R \{ \ \vecC\ |\   \vecC=\di_{\omega^2_1} \  \} \ .
\label{chiM}
\end{equation}
We denote the reduced differential system 
\begin{equation}\label{defMcmc}
\tilsysMCMC = \sysM/\rchi^{}_{\!\! \scriptstyle{\sysM}} = \sysM/SO(2), 
\end{equation}
which is defined on the quotient manifold $\F/\rchi^{}_{\!\! \scriptstyle{\sysM}}=\F/SO(2)$.  (Since the 2-form $\omega^1 \wedge \omega^2$ is $SO(2)$-invariant,
the independence condition is well-defined on the quotient.) \footnote{Reduction of a system ${\mathcal I}$ on a manifold $M$ by Cauchy characteristics  $ \rchi^{}_{\!\scriptstyle{{\mathcal I}}}  $ see \cite{BCG3}, is a special case of \eqref{Qsys} which satisfies
$
\bq^* ({\mathcal I}/\rchi^{}_{\! \scriptstyle{{\mathcal I}}} ) = {\mathcal I}
$
where $\bq : M \to M/\rchi^{}_{\! \scriptstyle{{\mathcal I}}} $ is the quotient by the orbits of the maximal  connected integral manifolds of $\rchi^{}_{\!\scriptstyle{{\mathcal I}}} $. See Section 2.3 in \cite{AFCR}}
It is immediate from \eqref{CMCeds} that $\tilsysMCMC$ is a Monge-Amp\`ere system on the 5-dimensional quotient manifold $\F/\rchi^{}_{\!\! \scriptstyle{\sysM}}$.

\subsection{Elliptic Structure of the CMC System}

We now show that the differential system $\tilsysMCMC$ from equation \eqref{defMcmc}
is an elliptic decomposable differential system on $\F /SO(2)$
(see Definition 2.10 of \cite{FelsIvey}). Since $\tilsysMCMC$ is the reduction of $\sysM$ by its Cauchy distribution given in equation \eqref{chiM}, we will work with $\sysM$ on $\F$ from which this will follow by Cauchy reduction.

For convenience, we first introduce the complex-valued 1-forms
\begin{equation}
\momega = \omega^1 + \ri \omega^2, \qquad 
\meta = \omega^3_1 - \omega^1 + \ri( \omega^2 -\omega^3_2).
\label{defompi}
\end{equation}
It follows from \eqref{Fstructures} that these satisfy the structure equations
\begin{equation}
\begin{aligned}
\rd\omega^3 &=&& \frac{1}{2}(\momega \wedge  \meta  +\momegabar \wedge \metabar  )  ,\\
\rd \momega& = && ( \metabar +\momega)  \wedge \omega^3 - \ri \, \omega^2_1 \wedge \momega\, , \\
\rd \meta & = &&  (\omega^3 +\ri\, \omega^2_1 ) \wedge  \meta   . 
\end{aligned}
\label{EdsStreqs}
\end{equation}
In terms of these complex-valued 1-forms, the 2-form generators of $\sysM$ may be combined to give
$$
\omega^3_1 \wedge \omega^1 + \omega^3_2 \wedge \omega^2 + \ri (\omega^3_1 \wedge \omega^2 - \omega^3_2 \wedge \omega^1 - 2 \omega^1 \wedge \omega^2) \\
= \meta  \wedge \momega, 
$$
so that the complexified system with independence is 
\begin{equation}
\sysM \otimes \C =\{\ \omega^3,\ \meta \wedge \momega ,\ \metabar \wedge \momegabar\  \}_\alg,  \quad \momega \wedge \momegabar \neq 0. 
\label{eqMC}
\end{equation}

For a differential system $\sysI$ generated algebraically by 1-forms and 2-forms, the essential ingredient that makes $\sysI$ an elliptic decomposable differential system is a (sub-)complex structure $\J$ 
on the distribution $\calD$ annihilated by the 1-forms of $\sysI$, with respect to which the 2-form generators
of $\sysI \otimes \C$ can be taken to be pure $(2,0)$-forms and their complex conjugates. This structure determines and is uniquely determined by the splitting of $\calD \otimes \C$ into $-\ri$ and $+\ri$ eigenspaces of $\J$, labelled as  
$\calD_-$ and $\calD_+$ respectively; these are in turn determined by their annihilators which 
are {\em singular bundle} $V$ and its complex conjugate, respectively.
(The singular bundle is spanned by the 1-form generators of $\sysI$ and $(1,0)$-form factors of the $(2,0)$-forms; see Remarks 2.2 and 2.9 in \cite{FelsIveyArchive}).

In our case we give the system
$\tilsysMCMC$ on the 5-dimensional manifold $\F/SO(2)$ the structure of a decomposable
system by specifying its singular bundle.  (Notice that $\sysM$ can't be an elliptic decomposable system for dimensional reasons, since the distribution on the 6-dimensional manifold $\F$ that is annihilated by its single 1-form generator has odd rank.)  Noting that the generators of $\sysM \otimes \C$ can
be expressed in \eqref{eqMC} as decomposable 2-forms, and the factor $\momega$ of one of these 2-forms pulls back to the surface $M$ to be a $(1,0)$-form relative to the standard complex structure that rotates $\ve_1$ into $\ve_2$, we let 
\begin{equation} 
W = \spanC \{\, \omega^3,\, \meta, \momega\, \}.
\label{defW}
\end{equation}
With $\vecC$ as in equation \eqref{chiM},
\begin{equation*}
\Lie_\vecC \omega^3 = 0, \quad \Lie_\vecC \meta = \ri \meta, \quad \Lie_\vecC \momega = -\ri \momega.
\end{equation*}
Thus $W$ is  $SO(2)$ invariant and semi-basic for the action of $SO(2)$ on $\F$ and is hence  a basic sub-bundle. Moreover this also implies $\chi_M$ are Cauchy characteristics for $W$. 
Hence $W$ is the pullback via $\pi_{SO(2)}:\F \to \F/SO(2)$ of a well-defined rank 3 bundle 
$\tilV \subset T^* (\F/SO(2)) \otimes \C$ which is the singular system of $\tilsysMCMC$, and 
\begin{equation}
\tilV=W/\rchi^{}_{\! \scriptstyle{{\mathcal M}}} = W/SO(2)  \ . 
\label{defV}
\end{equation}

On the rank 4 distribution $\calD$ annihilating the 1-forms of $\tilsysMCMC$ 
on $\F/SO(2)$ it is possible to  define directly  the sub-complex structure by specifying it on a horizontal lift of $\calD$ given by
$$
\widehat\calD  = \span_{\R} \{ \partial_{\omega^1}, \partial_{\omega^2}, \partial_{\omega^3_1}, \partial_{\omega^3_2} \} = \Delta \cap \{\omega^3\} ^\perp
$$
where $\Delta$ is given in equation \eqref{defADelta}.  (Note that $\widehat\calD/\rchi^{}_{\! \scriptstyle{{\mathcal M}}}=\widehat\calD/SO(2)=\calD$.)  Letting 
$$
\widehat\calD_-  = 
(\Delta \otimes \C) \cap W ^\perp=\spanC
\{ \di_{\omega^3_1} - \ri\di_{\omega^3_2}, 
\di_{\omega^1} +\ri \di_{\omega^2} + \di_{\omega^3_1} + \ri\di_{\omega^3_2} \},$$
we see that this is the $-\ri$-eigenspace for the $\C$-linear extension of $\J:\widehat\calD \to \widehat\calD$ if and only if
$$
\begin{aligned}
\J( \partial_{\omega^3_1}) &= -\partial_{\omega^3_2}, & 
	\J( \partial_{\omega^1}+\di_{\omega^3_1}) &=\partial_{\omega^2}+ \partial_{\omega^3_2}, \\
\J( \partial_{\omega^3_2}) &= \partial_{\omega^3_1}, &
	\J( \partial_{\omega^2}+ \di_{\omega^3_2}) &= -\partial_{\omega^1}-\partial_{\omega^3_1}.
\end{aligned}
$$
The $SO(2)$-invariance of $W$ implies that $\widehat\calD_-$ and $\J$ are $SO(2)$-invariant and that these project by $\pi_{SO(2)}: \F \to \F/SO(2)$ to the corresponding objects for $\calD$.  Alternatively, a slight modification of Corollary 2.8 in \cite{FelsIveyArchive} can be used to show $\tilsysMCMC$ is an elliptic decomposable system.

An elliptic decomposable system is Darboux-integrable if the (complex) rank of $\tilV^{(\infty) } + \overline{\tilV}$ equals the (real) dimension of the underlying manifold.  The system $\tilsysMCMC$ fails to be Darboux-integrable since by equations \eqref{EdsStreqs}, \eqref{defW} and \eqref{defV}  
\begin{equation*}
\tilV^{(\infty)} =  W^{(\infty)}/ \rchi^{}_{\! \scriptstyle{{\mathcal M}}} = W^{(\infty)}/SO(2)= \spanC \{\; \meta\; \}/SO(2),
\end{equation*}
which has rank one while   $\overline{\tilV}$   has rank 3. In particular, the system $\tilsysMCMC$ has only one independent complex-valued Darboux invariant; see \eqref{deflam} and Lemma \ref{DI1def} below where we identify this invariant using a description of $\F/SO(2)$ as a manifold.


\subsection{Darboux Integrability of the Prolonged CMC-1 Equation for Surfaces in $\HT$}\label{DIPR}
Now we consider the prolongation of $\tilsysMCMC$.  


\begin{lem} \label{LPR} Let 
$\qpt \in \F/SO(2)$ and suppose $E \subset T_\qpt \F/SO(2)$ is a 2-dimensional  integral element of $\tilsysMCMC$ satisfying the independence condition. Then for each $\upt\in \Cq^{-1}_{SO(2)}(\qpt)$ there is a complex number $p$ such that
$$
\Cq_{SO(2)}^* E^\perp \vert_\upt =\spanR  \{ \omega^3_\upt, \Re(\meta_\upt-p \momega_\upt), \Im(\meta_\upt - \pbar \momega_\upt) \}.
$$
Moreover, $p$ varies smoothly along the fiber and satisfies $\Lie_{\vecC} p = 2\ri p$.
\end{lem}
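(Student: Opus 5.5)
The plan is to lift the integral element $E$ to $\F$ and read off $p$ from linear algebra on the lifted subspace. Fix $\upt\in\Cq^{-1}_{SO(2)}(\qpt)$ and let $\widehat E=(\rd\Cq_{SO(2)})_\upt^{-1}(E)\subset T_\upt\F$. This is a $3$-dimensional subspace containing $\vecC_\upt$, and its annihilator is exactly $\Cq_{SO(2)}^*E^\perp\vert_\upt$, which has rank $3$. The generators of $\sysM$ are basic for the $SO(2)$ action, and $\sysM$ is the pullback of $\tilsysMCMC$. Hence $\widehat E$ is annihilated by $\omega^3$, and the $2$-forms $\meta\wedge\momega$ and $\metabar\wedge\momegabar$ of \eqref{eqMC} vanish on $\widehat E$. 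It therefore suffices to produce $p\in\C$ such that $\omega^3$ and $\meta-p\momega$ vanish on $\widehat E$. Then the three real forms $\omega^3$, $\Re(\meta-p\momega)$ and $\Im(\meta-p\momega)$ annihilate $\widehat E$, and they are independent because $\omega^3,\meta,\momega$ are. By the rank count, they span the annihilator of $\widehat E$. (Note $\Im(\meta-p\momega)=-\Im(\metabar-\pbar\momegabar)$, which gives the conjugate form in the statement.)

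To find $p$, observe that $\momega$ and $\meta$ both vanish on $\vecC$. The independence condition $\momega\wedge\momegabar\neq0$ on $E$ then says that $\momega,\momegabar$ restrict to a basis of $(\widehat E/\spanR\{\vecC_\upt\})^*\otimes\C$. So on $\widehat E$ we can write $\meta\vert_{\widehat E}=p\,\momega+q\,\momegabar$ for unique $p,q\in\C$. Substituting into $\meta\wedge\momega\vert_{\widehat E}=0$ gives $q\,\momegabar\wedge\momega\vert_{\widehat E}=0$, hence $q=0$. This proves the first assertion.

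For smoothness, choose a smooth local section of $\widehat E$ along the fiber that is transverse to $\vecC$. For instance, take the horizontal lift (in $\Delta$) of a fixed vector $v\in E$ with $\momega(v)\neq0$. Then $p=\meta(\hat v)/\momega(\hat v)$ is manifestly smooth in $\upt$. For the equivariance, note that $\widehat E$, viewed as a subspace along the fiber, is invariant under the flow of $\vecC$, since it is the full preimage of $E$. Consequently $\Lie_\vecC(\meta-p\momega)$ again annihilates $\widehat E$. Using $\Lie_\vecC\meta=\ri\meta$ and $\Lie_\vecC\momega=-\ri\momega$, we get
$$\Lie_\vecC(\meta-p\momega)=\ri\meta-(\vecC p)\momega+\ri p\momega\equiv(2\ri p-\vecC p)\,\momega \mod \meta-p\momega .$$
Since $\momega\vert_{\widehat E}\neq0$, this forces $\vecC p=2\ri p$.

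The only delicate point is this last step. The claim that the lifted subspace field is flow-invariant must be phrased correctly: it is a subbundle along the fiber, invariant because it is the preimage of a fixed $E$. The Lie derivative computation should then be done with forms restricted to that subbundle. An alternative that avoids this subtlety is to compute $p$ directly from $p=\meta(\hat v)/\momega(\hat v)$. The horizontal lift $\hat v$ is $SO(2)$-equivariant, because $\Delta$ is invariant. One then applies the transformation rules $R_\theta^*\meta=\re^{\ri\theta}\meta$ and $R_\theta^*\momega=\re^{-\ri\theta}\momega$, which gives $p(\upt\cdot S_\theta)=\re^{2\ri\theta}p(\upt)$.
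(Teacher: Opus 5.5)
Your proposal is correct and is essentially the paper's argument: lift $E$ to $T_\upt\F$ (you use the full $3$-dimensional preimage, the paper a $2$-plane lift, which makes no difference since $\meta$ and $\momega$ annihilate $\vecC$), get $\meta=p\momega$ there from $\meta\wedge\momega=0$ and the independence condition, conclude by the rank count, and obtain $\vecC p=2\ri p$ from $\Lie_\vecC\meta=\ri\meta$, $\Lie_\vecC\momega=-\ri\momega$ and the $SO(2)$-invariance of $\Cq_{SO(2)}^*E^\perp$. Your extra details (the $q=0$ step, smoothness via the horizontal lift in $\Delta$, and the finite form $p(\upt\cdot S_\theta)=\re^{2\ri\theta}p(\upt)$) make explicit what the paper leaves implicit; note only that the printed $\Im(\meta_\upt-\pbar\momega_\upt)$ in the statement is a typo for $\Im(\meta_\upt-p\momega_\upt)$, which is what you actually prove.
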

\begin{proof} Let $\widehat E \subset T_\upt \F$ be any 2-plane such that $\pi_{SO(2)}{}_* \widehat E = E$.  Then
$\widehat E$ is an integral element of $\sysM$ satisfying the independence condition, so 
$\meta\vert_{\widehat E} = p \momega\vert_{\widehat E}$ for some complex number $p$.   
Moreover, if we modify $\widehat E$ by adding a vertical vector, $p$ is unchanged; hence $p$ a well-defined and smooth along the fiber.  Hence this equation implies
$$
\spanR\{ \omega^3_\upt, \Re(\meta_\upt-p \momega_\upt), \Im(\meta_\upt - \pbar \momega_\upt) \}
\subset \Cq^* E^\perp \vert_\upt,
$$
with equality since both sides have rank 3.  Along the fiber we compute
$$\Lie_{\vecC} (\meta - p \momega) = \ri \meta + (\ri p - \Lie_{\vecC} p) \momega.$$
But since $\Cq_{SO(2)}^* E^\perp$ is $SO(2)$-invariant, this must be a complex multiple of $\meta-p \momega$.  Hence $\Lie_{\vecC} p = 2\ri p$.
\end{proof}

\begin{remark} Suppose $s:M \to \F$ is an integral surface of $\sysM$ satisfying the independence condition. 
If we set $\omega^3_i = h_{ij} \omega^j$ for $1 \le i,j\le 2$ on the pullback to $M$, then $p$ pulls back to equal $h_{11} - \ri h_{12}-1$, and it follows that the $(2,0)$-part of the second fundamental
form of $M$ is $\tfrac12 p \momega \otimes \momega$.
\end{remark}

Matters being so, we let $\sysI$ be the real Pfaffian system on $\F \times \C$, with complex coordinate $p$ on the second factor, such that 
\begin{equation}
\begin{aligned}
\sysI \otimes \C &=&& \{ \omega^3, \meta - p \momega, \metabar  - \pbar \momegabar \}_\diff .
\end{aligned}
\label{prI}
\end{equation}
(By way of an abuse of notation, we pull 1-forms on $\F$ back to the product $\F \times \C$ without comment.) 
Since 
$$\rd\, (\meta - p \momega) \equiv (-\rd p + 2\ri p\, \omega^2_1) \wedge \momega$$
modulo the 1-forms of $\sysI \otimes \C$, the Cauchy characteristic distribution for $\sysI$ is
\begin{equation}
\rchi_{\scriptstyle{{\sysI}}} =\span_\R \{
\widehat \vecC  \mid \widehat \vecC= \di_{\omega^2_1}+2\ri (p\di_p - \pbar \di_{\pbar})\}.
\label{defchiI}
\end{equation}
The vector field $\widehat \vecC$ generates an extension of the $SO(2)$-action on $\F$ to $\F \times \C$, given by
\begin{equation}
\rho((F, p), S_\theta)= (F\cdot S_\theta,  {\rm e} ^ {2\ri \theta}p )
\label{defrho}
\end{equation}
where $F \in \F \cong SO(3,1)^+$, $S_\theta \in SO(2)$, and again we take $SO(2)$ to be the subgroup of matrices in $SO(3,1)^+$ given in equation \eqref{defSOC}.


Letting $\tilsysICMC=\sysI/\rchi^{}_{\scriptstyle{{\sysI}}}=\sysI/\rhoSO $ be the quotient
by this action,  
it then follows by Lemma \ref{LPR} that 
$\tilsysICMC$ is the prolongation of $\tilsysMCMC$.



As with our discussion of $\tilsysMCMC$ above, we give $\tilsysICMC$ the structure of an elliptic decomposable system by designating an $SO(2)$-invariant sub-bundle $\prW \subset T^* (\F \times \C) \otimes \C$ whose reduction $\prV=\prW/\rchi^{}_{\!\scriptstyle{{\sysI}}}=\prW/\rho$ is the singular bundle $\prV$ of $\tilsysICMC$:
\begin{equation}
\prW/\rchi^{}_{\!\scriptstyle{{\sysI}}} = \spanC \{\, \omega^3,\, \meta - p \momega, \metabar - \pbar \momegabar, \, \momega,\,
\rd p - 2\ri p \omega^2_1 \, \}/\rchi^{}_{\!\scriptstyle{{\sysI}}} = \prV
\label{defWpr}
\end{equation}
We computed the derived systems of $\prV$ from those of $\prW$, which are  
\begin{equation}
\begin{aligned}
&&\prW^{(1)}&= \spanC \{  \omega^3, \meta, \rd p-2\ri p \omega^2_1, \momega \}, \\
&&\prW^{(2)} &=\spanC  \{ \meta , \rd p-2p (\omega^3+\ri \omega^2_1), \momega \}\\
\prW^{(\infty)}&=&\prW^{(3)}&= \spanC \{ \meta , \rd p-2p (\omega^3+\ri \omega^2_1) \}
\end{aligned}
\label{flagWpr}
\end{equation}
Since the terminal derived system of $\prW$ (and hence $\prV$)  has rank two, $\tilsysICMC$ is Darboux-integrable. In the next section we will compute and interpret the Darboux invariants. 

We now make the following important observation based on Remark 3.3 in \cite{FelsIvey}, the Darboux integrability of $\tilsysICMC$, and the derived flag ranks $[5,4,3,2]$ of $\prW$ (and hence $\prV$).
\begin{corollary}\label{CQR} There exists (locally) a 5 dimensional complex manifold $N$ and holomorphic Pfaffian system $\sysH$ on $N$ with derived flag ranks $3,2,1,0$ with freely acting symmetry group $K$  such that $\tilsysICMC = \sysE/G$ where $\sysE$ is the real version of $\sysH$ and $G$ is a real form of $K$.
\end{corollary}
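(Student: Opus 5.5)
The plan is to apply the general quotient representation theorem of \cite{FelsIvey} to the Darboux integrable elliptic decomposable system $\tilsysICMC$ on the 7-dimensional manifold $(\F\times\C)/SO(2)$. The dimensions $\dim_\C N = 5$ and the derived flag ranks $3,2,1,0$ of $\sysH$ are then read off from the derived flag $[5,4,3,2]$ of $\prV$ recorded in \eqref{flagWpr}, via Remark 3.3 of \cite{FelsIvey}.

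\textbf{Step 1: hypotheses of the main theorem.} First I check that the main theorem of \cite{FelsIvey} applies. We must confirm that $\tilsysICMC$ is an elliptic decomposable system with singular bundle $\prV$ and that it is Darboux integrable.
\begin{itemize}
\item Ellipticity follows as for $\tilsysMCMC$. The bundle $\prW$ in \eqref{defWpr} is $SO(2)$-invariant under $\widehat\vecC$ and semi-basic for $\rho$, so it descends to $\prV$.
\item Darboux integrability is the rank count already observed. Since $\prV^{(\infty)}$ has rank $2$, the bundle $\prV^{(\infty)}+\overline{\prV}$ has rank $2+5=7$, which equals the real dimension. This requires $\prV^{(\infty)}\cap\overline{\prV}=0$, which I would verify pointwise using \eqref{flagWpr}.
\end{itemize}

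\textbf{Step 2: invoke the theorem.} The theorem then produces, locally, a complex manifold $N$, a holomorphic Pfaffian system $\sysH$, a complex Lie group $K$ acting freely by symmetries, and a real form $G\subset K$ with $\tilsysICMC=\sysE/G$.

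\textbf{Step 3: dimensions and ranks.} By the construction in \cite{FelsIvey}, $N$ is locally a product of the two integral-manifold quotients $M/\prV^{(\infty)}{}^\perp$ and its conjugate, fibred over a $K$-orbit. The invariants entering are:
\begin{itemize}
\item the rank of $\prV$, namely $5$;
\item the number of Darboux invariants, namely $\operatorname{rank}\prV^{(\infty)}=2$;
\item $\dim_\R M=7$.
\end{itemize}
The standard count is
$$\dim_\C N=\dim M-\operatorname{rank}\prV^{(\infty)}=7-2=5,$$
and
$$\dim G = 2\dim_\C N-\dim M = 3.$$
This gives $\dim_\C K=3$, consistent with $K=SL(2,\C)$ later. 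Remark 3.3 identifies the derived flag of $\sysH$ with that of $\prV$ modulo $\prV^{(\infty)}$. Subtracting $2$ from each entry of $[5,4,3,2]$ gives ranks $3,2,1,0$.

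\textbf{Main obstacle.} The main difficulty is this last identification: showing that the quotient by $G$ preserves the derived flag length, so that $\sysE/G$ has the same derived flag as $\prV$ shifted by the terminal system. I would handle it by pulling back along $\pi:N\to N/G$ and using that pullback commutes with exterior derivative. The $G$-action is free and transverse to $\sysE$, so $\pi^*$ carries each derived system of $\prV$ isomorphically onto the corresponding derived system of $\sysE$ plus the Darboux invariants. This is exactly the content cited from Remark 3.3, so the corollary follows once the rank counts above are confirmed.
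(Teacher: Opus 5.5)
Your proposal follows the paper's own argument: the paper likewise only combines the Darboux integrability of $\tilsysICMC$ with the Fels--Ivey quotient representation theorem and Remark 3.3 there, so that $\dim_\C N = 7-2 = 5$ and the ranks $3,2,1,0$ come from the flag $[5,4,3,2]$ of $\prV$ by subtracting $\operatorname{rank}\prV^{(\infty)}=2$. Two corrections, neither of which breaks the argument: first, the pointwise check $\prV^{(\infty)}\cap\overline{\prV}=0$ in your Step 1 holds only where $p\neq 0$ (at umbilic points $\meta=\meta-p\momega$ lies in $\prW^{(\infty)}\cap\overline{\prW}$), which is harmless because the statement is local; second, $N$ should be pictured locally as $K$ times a complex $2$-dimensional factor carrying the Darboux invariants (as with $SL(2,\C)\times\Ofour$), not as a product with its conjugate, and this is what your count $5=3+2$ actually uses.
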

Locally there is only one possibility for $(N,\sysH)$ (see \cite{Yama}) and it is shown in Appendix \ref{A3} that $K$ is $SL(2,C)$ and $G$ is $SU(2)$. According to  Lie (see \cite{PurplePete}) there are only 3 possible local actions of $SL(2,\C)$ on $N$ and in Section \ref{findQR} we identify the $SL(2,\C)$ action for this case and give a global construction of $N$, $\sysH$, $K$ and $G$ satisfying Corollary \ref{CQR}.


Summarizing this section, we have the commutative diagram of differential systems   with horizontal maps being reduction and vertical being projection with $\tilde \pi_1$ being induced from $\pi_1:\F \times \C \to \F$ using the fact that the action $SO(2)$ in \eqref{defrho} is fiber preserving and giving,
\begin{equation} 
\begin{gathered}
\tikzstyle{line} = [draw, -latex', thick]
\begin{tikzpicture}
\node(SLCP) {$\left(\, \F \times \C ,\ {\sysI}\,  \right)$};
\node[right of=SLCP, node distance=60mm](B){
$\left(\, (\F\times \C)/\rchi^{}_{\!\scriptstyle{{\sysI}}},\  \tilsysICMC\, \right)$};
\node[below of=B, node distance=20mm](I2) {$\left( \, \F/\rchi^{}_{\!\! \scriptstyle{{\sysM}}}   ,\ \tilsysMCMC \,  \right)$};
\node[left of=I2, node distance=6 0mm](I1) {$\left( \, \F   ,\ {\mathcal M} \,  \right)$};
\path[line](SLCP) -- node[above] {$\pi_{\rchi^{}_{\!\scriptstyle{{\sysI}}}}$} (B);
\path[line](B) -- node[left] {$\tilde \pi_1$} (I2);
\path[line](I1) -- node[below] {$ \pi_{\rchi^{}_{\!\! \scriptstyle{{\sysM}}}} $} (I2);
\path[line](SLCP) -- node[left] {$\pi_1$} (I1);
\end{tikzpicture}
\end{gathered}
\label{findH1}
\end{equation}
where $\pi_{\rchi^{}_{\!\scriptstyle{{\sysI}}}}$ denotes the quotient map by the Cauchy characteristics of $\sysI$ whose maximal integral manifolds  are the $SO(2)$ orbits given by the action $\rho$ in equation \eqref{defrho} (so that  $\pi_{\rchi^{}_{\!\scriptstyle{{\sysI}}}}=\pi_\rho$).  We also have $ \pi_{\rchi^{}_{\!\! \scriptstyle{{\sysM}}}}=\pi_{SO(2)}$. 

Summarizing  from \eqref{eqMC}, \eqref{defW}, \eqref{prI}, \eqref{defWpr} the systems satisfy,
\begin{equation*}
\begin{aligned}
\sysI \otimes \C &=&& \{ \omega^3, \meta - p \momega, \metabar  - \pbar \momegabar \}_\diff , 
&\prW &=& &\spanC \{ \omega^3, \meta - p \momega, \metabar - \pbar \momegabar, 
\rd p - 2\ri p \omega^2_1, \momega \}\\
\sysM \otimes \C &=&& \{\ \omega^3,\ \meta \wedge \momega ,\ \metabar \wedge \momegabar\  \}_\alg ,
&W &=& &\spanC \{ \omega^3, \meta, \momega\},
\end{aligned}
\end{equation*}
and
\begin{equation}
\sysM =  \pi_{\rchi^{}_{\!\! \scriptstyle{{\sysM}}}}^*  \tilsysMCMC, \ 
W = \pi_{\rchi^{}_{\!\! \scriptstyle{{\sysM}}}}^* \tilV, \quad 
\sysI =\pi_{\rchi^{}_{\!\scriptstyle{{\sysI}}}} ^*\, \tilsysICMC\ , \ \prW=\pi_{\rchi^{}_{\!\scriptstyle{{\sysI}}}}^*\ \prV \ .
\label{defPB1}
\end{equation}
where $\tilsysICMC$ is the prolongation of $\tilsysMCMC$.


%% file: section3.tex
\section{The Lifted Systems on $SL(2,\C)$}

In this section we recall the action of $SL(2,\C)$ on $\R^4$ preserving the Minkowski inner product, which induces a double cover of the frame bundle $\F=SO(3,1)^+$ by $SL(2,\C)$.  Pulling systems $\tilsysMCMC$ and $\tilsysICMC$
back via this double cover will  simplify the computations as well as align them with references \cite{BryantCMC}, \cite{Lee}, \cite{Small}. The lifting will also allow us to determine the Darboux invariants as well as setting the stage for finding the quotient representation of $\tilsysICMC$ in Section \ref{findQR}, and then showing the equivalence to the elliptic Liouville system in Section \ref{findEQ}.

Following \cite{BryantCMC}, we  identify $\R^4$ with the vector space of Hermitian-symmetric $2\times 2$ matrices as follows:
\begin{equation}
\sfj: (x^0, x^1, x^2, x^3)^T \mapsto \begin{pmatrix} x^0 + x^3 &  x^1 + \ri x^2 \\ x^1 - \ri x^2 & x^0 - x^3 \end{pmatrix}
\label{defj}
\end{equation}
(see (1.6) in \cite{BryantCMC}).  Since this identification carries the Minkowski quadratic form to $-1$ times the determinant, the action of $SL(2, \C)$ on $\R^4$ defined by 
\begin{equation}\label{SL2onR4}
A \cdot {\bf x} = \sfj^{-1} \left( A \sfj({\bf x}) A^\dagger\right), \qquad A \in SL(2,\C),
\end{equation}
where $\dagger$ denotes the conjugate-transpose, preserves the Minkowski inner product (see (1.7) in \cite{BryantCMC}).
Again letting $\uve0, \uve1, \uve2, \uve3$ denote the standard basis vectors for $\R^4$, it is well-known that a double cover homomorphism $\sigma: SL(2,\C) \to \F=SO(3,1)^+$ is given by
\begin{equation*}
\sigma: A \mapsto (A \cdot \uve0, A \cdot \uve1, A \cdot \uve2, A \cdot \uve3) \in \F.
\end{equation*}


Composing $\sigma$ with the basepoint map $\ve_0$ gives a mapping 
\begin{equation}\label{SL2H3}
A \mapsto A \cdot \uve0 = \sfj^{-1} \left(A A^\dagger\right)
\end{equation}
which is surjective onto $\HT$.  Note that to check the second defining condition for $\HT$ in equation \eqref{defofH3} with 
\begin{equation}
A= \left[ \begin{matrix} a & b \\ c & d \end{matrix} \right]  \ , \quad ad-bc =1
\label{Adef}
\end{equation}
and writing $A \cdot \uve0 = (x^0,x^1,x^2,x^3) \in \R^4$ we have 
$x^0= \frac{1}{2}(|a|^2+|b|^2 +|c|^2+|d|^2) >0$.
Since the isotropy of $\uve0$ under the action \eqref{SL2onR4} is $SU(2)$, then 
\eqref{SL2H3} represents the  coset map  $\pi_{SU(2)}:SL(2,\C) \to SL(2,\C)/SU(2)$ 
and we denote this  by
\begin{equation}
\Pi_{SU(2)}(A) = \sfj^{-1}(  A A^\dagger).
\label{defpisu2}
\end{equation}
To summarize, the restriction of the $SL(2,\C)$ action \eqref{SL2onR4} on $\R^4$ to hyperbolic space 
realizes $\HT$ as an $SL(2,\C)$ homogeneous space, with isotropy $SU(2)$ at
$\uve0=(1,0,0,0)$.  This gives the diffeomorphism $\lambda : SL(2,\C)/SU(2) \to \HT$ such that $\Pi_{SU(2)}(A)= \lambda( A\cdot SU(2))$.

Since the center $\{ \pm I\}$ of $SL(2,\C)$ is the kernel of the homomorphism $\sigma$ and lies in $SU(2)$, there is a well-defined map $\Pi_{SO(3)}: SO(3,1)^+ \to \HT$ such that 
$$
\Pi_{SU(2)} = \Pi_{SO(3)} \circ \sigma \ .
$$
Here, $SO(3)$ is regarded as the subgroup of $SO(3,1)^+ \cong \F$ defined by 
\[ \left\{ \begin{bmatrix} 1 & 0 \\ 0 & M \end{bmatrix} \bigg \vert M \in GL(3,\R), M^{-1} = M^t \right \}\]
(this is also the fiber of $\F$ over $\uve0$). In fact  $\Pi_{SO(3)}$ is just the basepoint map on $\F$.
We use the notation $\Pi_{SO(3)}$ for the above map because it represents the
coset map $\pi_{SO(3)}: SO(3,1)^+ \to SO(3,1)^+/ SO(3)$.  (This follows from the fact
that $\sigma(SU(2)) = SO(3)$, so that $\sigma$ induces a natural diffeomorphism $SL(2,\C) / SU(2) \cong SO(3,1)^+/ SO(3)$.)

\medskip

For the rest of this article  we will make frequent reference to the following
subgroups of $SL(2,\C)$:
\begin{equation}
\begin{aligned}
\Borel &=&& \left\{\ \left[ \begin{matrix} a & b \\ 0 & a^{-1} \end{matrix} \right] \bigg \vert \ a\in \Cm, b \in \C \ \ \right\}\\
S^1&=&&\left\{ \left[ \begin{matrix} {\rm e}^{{\bf i} \theta} &0 \\ 0  &  {\rm e}^{-{\bf i} \theta} \end{matrix} \right] \bigg\vert \ \theta\in[0,2\pi] \ \right\} = SU(2) \cap \Borel .
\end{aligned}
\label{SO2sub}
\end{equation}



A relation between the three quotient spaces $SL(2,\C)/S^1, SL(2,\C)/SU(2)$ and $SL(2,\C)/\Borel$ is given in the next lemma.  For that we will use the representation $\Pi_\Borel:SL(2,\C) \to \CP^1 $ of the quotient map $\pi_\Borel:SL(2,\C)\to SL(2,\C)/\Borel$ given by the holomorphic function
\begin{equation}
\begin{aligned}
\Pi_\Borel: \begin{bmatrix} a & b \\ c & d \end{bmatrix}\mapsto [a,c], 
\label{defPiB}
\end{aligned}
\end{equation}
where the row vector on the right are homogeneous coordinates on $\CP^1$.

\begin{lem} 
The map $\Pi_{S^1}: SL(2,\C) \to \HT \times \CP^1$ given by
\begin{equation}
\Pi_{S^1}(A)= \left(\, \Pi_{SU(2)}(A),\, \Pi_{\Borel}(A)\,\right),
\label{defpis1}
\end{equation}
where $\Pi_{SU(2)}$ and $\Pi_{\Borel}$ are given in equations \eqref{defpisu2} and \eqref{defPiB} respectively, represents the quotient $\pi_{S^1}:SL(2,\C) \to SL(2,\C)/S^1$. 
\end{lem}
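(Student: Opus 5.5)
To show that $\Pi_{S^1}$ represents the quotient $\pi_{S^1}$, we check that it is $S^1$-invariant and that the induced map $\lambda: SL(2,\C)/S^1 \to \HT\times\CP^1$ is injective, i.e. its fibers are exactly the cosets $A\cdot S^1$. (We do not need surjectivity onto $\HT\times\CP^1$; that comes out as a bonus.)

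\emph{Invariance.} For $S\in S^1$, $\Pi_{SU(2)}(AS)=\sfj^{-1}(ASS^\dagger A^\dagger)=\Pi_{SU(2)}(A)$ because $S^1\subset SU(2)$. Also $\Pi_\Borel(AS)=\Pi_\Borel(A)$ because $S^1\subset\Borel$. Concretely, right multiplication by $\mathrm{diag}(\re^{\ri\theta},\re^{-\ri\theta})$ scales the first column $(a,c)$ by $\re^{\ri\theta}$, so $[a,c]$ is unchanged.

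\emph{Injectivity.} Suppose $\Pi_{S^1}(A)=\Pi_{S^1}(A')$. Equality of the first components says $AA^\dagger=A'A'^\dagger$. Hence $A^{-1}A'$ is unitary, and since it has determinant one, $A'=AU$ with $U\in SU(2)$. This is just the statement, already recorded above, that $\Pi_{SU(2)}$ represents $\pi_{SU(2)}$.

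Equality of the second components says that $A$ and $A'$ have proportional first columns, so $A'=AB$ with $B\in\Borel$. This is because $[a,c]$ is exactly the image of the coset $A\Borel$, as $\Borel$ is the stabilizer of the line spanned by $(1,0)^T$.

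So $U=A^{-1}A'=B$ lies in $SU(2)\cap\Borel$. We then check that this intersection is $S^1$, the equality asserted in \eqref{SO2sub}. An upper triangular unitary matrix with determinant one must be diagonal: the columns are orthonormal, the first column is $(a,0)$, so orthogonality forces $b=0$. Unit norm then gives $|a|=1$. Hence $A'\in A\cdot S^1$, so the fibers of $\Pi_{S^1}$ are exactly the $S^1$-cosets and $\lambda$ is a bijection onto its image.

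\emph{Surjectivity (remark).} Given a point of $\HT$ and a line $[v]\in\CP^1$, we pick any $A$ with $AA^\dagger$ the corresponding positive Hermitian matrix. We then right-multiply by some $U\in SU(2)$ whose first column is proportional to $A^{-1}v$; this fixes the second component to $[v]$ without changing the first. This shows $\lambda$ is a bijection onto all of $\HT\times\CP^1$.

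\emph{Smoothness.} If a diffeomorphism is wanted rather than merely a bijection, note that $\Pi_{S^1}$ is a smooth $S^1$-invariant submersion. Both factors are submersions, and their kernels meet exactly in the $S^1$ directions. Checking this at the identity is a Lie algebra computation: $\fsu(2)\cap\mathfrak b$ is the Lie algebra of $S^1$, and $\dim_\R SL(2,\C)-1=5=3+2$.

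The only real content of the argument is the step $SU(2)\cap\Borel=S^1$ together with correctly identifying the fibers of each factor; neither is an obstacle.
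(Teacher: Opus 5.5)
Your argument is correct. It is essentially the paper's homogeneous-space proof written in terms of fibers: your surjectivity step is the paper's transitivity argument (move the point of $\HT$ first, then use $SU(2)$ to fix the line), your injectivity step is its isotropy computation $SU(2)\cap\Borel=S^1$, and your rank count replaces the paper's appeal to the standard fact that $G/G_x\to G\cdot x$ is a diffeomorphism. One correction to your framing: under the paper's convention, ``represents'' means the induced $\lambda$ is a bijection onto $\HT\times\CP^1$, so surjectivity is a required step rather than a bonus --- and your surjectivity paragraph does supply it.
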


\begin{proof} We show, using a homogeneous space argument, that the canonical map as described in the Introduction, $\lambda:SL(2,\C)/S^1 \to \HT\times \CP^1$ defined by
\begin{equation}
\lambda(\pi_{S^1}(A) )  = (\, \Pi_{SU(2)}(A), \Pi_\Borel(A)\, ) =\Pi_{S^1}(A) \\
\label{deflam0}
\end{equation}
where $A \in SL(2,\C)$ is a bijection. To prove this we first we show that the diagonal left action of $SL(2, \C)$ on $\HT\times \CP^1$ is transitive. Any point in $(x,y) \in \HT \times \CP^1$ can be mapped to $\left((1,0,0,0),[1,0]\right)$ by an element of $SL(2,\C)$ acting diagonally by first mapping $x\to(1,0,0,0)$. Then using the $SU(2)$ isotropy of $(1,0,0,0)\in \HT$  we  map $y\to[1,0]$ which shows transitivity which implies $\HT\times \CP^1$ is an $SL(2,\C)$ homogeneous space for the diagonal left $SL(2,\C)$ action. 

The isotropy subgroup of $\left((1,0,0,0),[1,0]\right)=(\Pi_{SU(2)}(I_2) ,\Pi_{B}(I_2)) $ is the intersection $S^1$ given in equation \eqref{SO2sub}. The map $\lambda$ in equation \eqref{deflam0} is none other than the standard map representing $\HT\times \CP^1$  with basepoint $\left((1,0,0,0), [1,0]\right)$, as an $SL(2,\C)$ homogeneous space with $S^1$ isotropy, and as such is an $SL(2,\C)$ equivariant diffeomorphism. 
\end{proof}


The fact that $\sigma : S^1 \to SO(2)$ with $S^1$ and $SO(2)$ given in equations \eqref{SO2sub} and \eqref{defSOC}  leads to the following. 

\begin{corollary}
The quotient map $\Pi_{S^1}$ induces the $SO(2)$ invariant map $\Pi_{SO(2)}: SO(3,1) \to \HT \times \CP^1$ such that
\begin{equation} 
\begin{gathered}
\tikzstyle{line} = [draw, -latex', thick]
\begin{tikzpicture}%
\node(SLCP) {$SL(2,\C) $};
\node[below of=SLCP, node distance=20mm](I2) {$SO(3,1)^+$};
\node[right of=I2, node distance=50mm](B){$\HT \times \CP^1$};
\path[line](SLCP) -- node[left]  {$\sigma$} (I2);
\path[line](I2) -- node[below] {$\Pi_{SO(2)}$} (B);
\path[line](SLCP) -- node[above] {$\Pi_{S^1}$} (B);
\end{tikzpicture}
\end{gathered}
\label{CDSL2modS}
\end{equation}
is a commutative diagram and $\Pi_{SO(2)}$ represent the quotient map $\pi_{SO(2)}:\F\to \F/SO(2)$. 
\end{corollary}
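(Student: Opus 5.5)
Since $\sigma$ is a surjective homomorphism with kernel $\{\pm I\}$, the map $\Pi_{SO(2)}$ will be defined by descent: for $F\in SO(3,1)^+$ choose $A$ with $\sigma(A)=F$ and set $\Pi_{SO(2)}(F)=\Pi_{S^1}(A)$. The plan has three steps: well-definedness and smoothness, then $SO(2)$-invariance, then the fact that $\Pi_{SO(2)}$ represents the quotient $\pi_{SO(2)}$.

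\emph{Well-definedness and smoothness.} The only other preimage of $F$ is $-A$. Since $-I\in S^1$ (take $\theta=\pi$), and $\Pi_{S^1}$ is constant on right $S^1$-cosets by the preceding Lemma, we get $\Pi_{S^1}(-A)=\Pi_{S^1}(A)$. This can also be checked directly: $(-A)(-A)^\dagger=AA^\dagger$ and $[-a,-c]=[a,c]$. Smoothness follows because $\sigma$ is a covering map, hence a local diffeomorphism, so $\Pi_{SO(2)}$ locally equals $\Pi_{S^1}\circ\sigma^{-1}$. Commutativity of diagram \eqref{CDSL2modS} holds by construction.

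\emph{$SO(2)$-invariance.} Because $\sigma$ is a homomorphism, $\sigma(A)\sigma(B)=\sigma(AB)$. We need $\sigma(S^1)=SO(2)$, with $SO(2)$ as in \eqref{defSOC}. One checks from \eqref{SL2onR4} and \eqref{defj} that $B=\mathrm{diag}(\re^{\ri\theta},\re^{-\ri\theta})$ fixes $\uve0$ and $\uve3$, since it conjugates diagonal Hermitian matrices trivially. It multiplies the off-diagonal entry $x^1+\ri x^2$ by $\re^{2\ri\theta}$, so $\sigma(B)=S_{2\theta}$, and $S^1$ maps onto $SO(2)$. Then for $S=\sigma(B)\in SO(2)$ we have $\Pi_{SO(2)}(FS)=\Pi_{S^1}(AB)=\Pi_{S^1}(A)=\Pi_{SO(2)}(F)$.

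\emph{Representing the quotient.} The remaining and main point is to show that the induced map $\F/SO(2)\to \HT\times\CP^1$ is a bijection, i.e.\ that the fibers of $\Pi_{SO(2)}$ are exactly the $SO(2)$-orbits. Suppose $\Pi_{SO(2)}(F)=\Pi_{SO(2)}(F')$ with lifts $A,A'$. By the preceding Lemma, $\lambda$ is a bijection, so $A'=AB$ for some $B\in S^1$. Applying $\sigma$ gives $F'=F\sigma(B)\in F\cdot SO(2)$. Surjectivity comes from surjectivity of $\Pi_{S^1}$ together with that of $\sigma$.

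To conclude that this bijection is a diffeomorphism, I would compare dimensions. $\Pi_{SO(2)}$ is a submersion because $\Pi_{S^1}$ is one (it is the quotient map of a homogeneous space) and $\sigma$ is a local diffeomorphism. Both $\F/SO(2)$ and $\HT\times\CP^1$ are $5$-dimensional, so the induced bijection is a bijective submersion between manifolds of the same dimension, hence a diffeomorphism. Alternatively, note that it is equivariant for $SO(3,1)^+\cong SL(2,\C)/\{\pm I\}$, acting on the left on both sides.
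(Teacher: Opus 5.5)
Your proposal is correct and follows essentially the same route as the paper. The paper's proof just asserts that the diffeomorphism $\lambda: SL(2,\C)/S^1\to \HT\times\CP^1$ descends through $\sigma$ to a diffeomorphism $\delta: SO(3,1)^+/SO(2)\to \HT\times\CP^1$ with $\delta(\sigma(A)\cdot SO(2))=\Pi_{S^1}(A)$. You supply the details it leaves implicit: $\ker\sigma=\{\pm I\}\subset S^1$, $\sigma(S^1)=SO(2)$ via $\sigma(\mathrm{diag}(\re^{\ri\theta},\re^{-\ri\theta}))=S_{2\theta}$, and the bijectivity plus equal-dimension submersion argument that makes the induced map a diffeomorphism.
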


\begin{proof}
The diffeomorphism $\lambda:SL(2,\C)/S^1\to \HT \times \CP^1$ in equation  \eqref{deflam} induces a diffeomorphism $\delta:SO(3,1)^+/SO(2) \to \HT\times \CP^1$ defined by
\begin{equation}
\delta \left( \sigma(A) \cdot SO(2)\right) = \Pi_{SO(2)}(\sigma(A)) = \lambda(A\cdot S^1)=\Pi_{S^1}(A)\ , \quad A\in SL(2,\C)
\label{deflam}
\end{equation}
so that $\Pi_{SO(2)}$ represents the quotient map $\pi_{SO(2)}$. 
\end{proof}






\subsection{Lifting  $\sysM$ to $SL(2,\C)$ and the first Darboux invariant} 
In this section we describe the pullback of $\sysM$ under the double cover $\sigma : SL(2, \C) \to \F$. We begin by specifying bases for the left-invariant vector fields and 1-forms on $SL(2,\C)$.
Let
\begin{equation}
E_1=\left[ \begin{matrix} 1 & 0 \\ 0 & -1  \end{matrix} \right], \ E_2 =\left[ \begin{matrix} 0 & 1 \\ 0 & 0  \end{matrix} \right], E_3=\left[ \begin{matrix} 0 & 0 \\ 1 & 0  \end{matrix} \right] 
\label{sl2basis}
\end{equation}
be a basis for $T_e SL(2,\C)$ and let $Z_i$ be the corresponding holomorphic left invariant vector fields on $SL(2,\C)$. On the open set dense subset of $SL(2,\C)$ given by $a\neq 0$ in equation \eqref{Adef}, these vector fields are given in the resulting coordinates $(a,b,c)$ by
\begin{equation}
Z_1  =   a \partial_a - b \partial_b +c \partial_c \ , \quad
Z_2  =  a \partial_b\ , \quad Z_3  =  b \partial_a +  \frac{1+bc}{a} \partial_ c .
\label{defZ}
\end{equation}
The vector fields $Z_i$ are the dual basis to the left invariant 1-forms $\alpha^i$ defined as components of the Maurer-Cartan form:
\begin{equation*}
A^{-1} dA =\left[ \begin{matrix} \alpha^1 & \alpha^2 \\ \alpha^3 & -\alpha^1 \end{matrix} \right] 
\end{equation*}
with $A$ given by equation \eqref{Adef}.  Either by computing the bracket relations of the $Z_i$, or directly from the coordinate expressions of the $\alpha_i$, we obtain the structure equations
\begin{equation}
\rd \alpha^1 = - \alpha^2 \wedge \alpha^3\ , \ \rd \alpha^2 = -2 \alpha^1 \wedge \alpha^2 \ , \ \rd \alpha^3 = 2 \alpha^1 \wedge \alpha^3 \ .
\label{dalphaeq}.
\end{equation}

Following \cite{BryantCMC} -- see equation (1.9) in that paper --
we express the Maurer-Cartan form as
\begin{equation}
\left[ \begin{matrix} \alpha^1 & \alpha^2 \\ \alpha^3 & -\alpha^1 \end{matrix} \right] = \tfrac12 \sigma^* \begin{pmatrix} \omega^3 + \ri \omega^2_1 & \metabar+2 \momega \\  - \meta & - (\omega^3 + \ri \omega^2_1)\end{pmatrix},
\label{gdg}
\end{equation}
where $\momega, \meta$ are as in \eqref{defompi}.
Let ${\mathcal B}=\sigma^* \sysM$.  Since \eqref{gdg} implies
\begin{equation}
\sigma^*\omega^3= \alpha^1+\overline{\alpha^1},\  \sigma^*\meta=-2 \alpha^3  \ , \ \sigma^* \momega= \alpha^2 +\overline{\alpha^3},
\label{pbmc}
\end{equation}
then from equations \eqref{eqMC} and \eqref{pbmc} we have
\begin{equation}
\begin{aligned}
{\mathcal B}\otimes \C = \sigma^*(\sysM\otimes \C) &=&&
\{\ \alpha^1+\overline{\alpha^1}, \ \alpha^3 \wedge ( {\alpha^2} + \overline{\alpha^3}),\ \overline{\alpha^3} \wedge ( \overline{\alpha^2} + {\alpha^3}) \ \}_\alg, 
\end{aligned}
\label{MAonSL2}
\end{equation}
while the independence condition pulls back to 
 $( {\alpha^2} + \overline{\alpha^3})\wedge ( \overline{\alpha^2} + {\alpha^3})\neq 0$.
The first structure equation in \eqref{EdsStreqs} gives the structure equation for $\sysB\otimes \C$
\begin{equation}
\rd\,  (\alpha^1 + \overline{\alpha^1}) =\tfrac{1}{2}\left(\alpha^3 \wedge ( {\alpha^2} + \overline{\alpha^3})+ \overline{\alpha^3} \wedge ( \overline{\alpha^2} + {\alpha^3})\right)=
 \tfrac{1}{2} (\Omega +\overline{\Omega}).
\label{dal1al1_}
\end{equation}
where  $\Omega=\alpha^3 \wedge ( {\alpha^2} + \overline{\alpha^3})$ and equations \eqref{dalphaeq} are used. This explicitly shows that $\sysB \otimes \C$ is algebraically generated as given in equation \eqref{MAonSL2}. We also define the singular bundle $\CCR$ associated to $\sysB$ such that $\sigma^* W = \CCR$.

Any integral surface  $s : M \to SL(2,\C )$  of ${\mathcal B}$ satisfying the independence condition defines a CMC-1 hypersurface in $\HT$ through composition with $\pi_{SU(2)}$ defined in \eqref{defpisu2}, and conversely, any CMC-1 hypersurface lifts (locally) to an integral manifold of $\sysB$. 

The system ${\mathcal B}$ in equation \eqref{MAonSL2} admits the Cauchy characteristic distribution
\begin{equation*}
\rchi^{}_{\! \scriptstyle{\sysB}} = \span_\R \{ \vecCY \ | \ \vecCY = \ri(Z_1 -\overline{Z_1}) \}
\end{equation*}
where  $\vecCY$ which is the infinitesimal generator of the one parameter subgroup $S^1$ in equation \eqref{SO2sub},
acting on $SL(2,\C)$ on the right. Note that as expected $\rchi^{}_{\scriptstyle{\sysB}} $ pushes forward by $\sigma$  to the Cauchy characteristic of $\sysM$ in equation \eqref{chiM},
\begin{equation}
\sigma_* \vecCY = 2 \partial_{\omega^2_1}.
\label{o12inZ}
\end{equation}

\def\sysMCM{{\sysM}_{CMC}}
\def\Vs{{V}}
\def\Vtinf{\Vs^{(\infty)}}
Let $\sysMCM$ be the Monge-Amp\`ere differential system on $\HT \times \CP^1$ defined by
$\delta^* \sysMCM = \tilsysMCMC$ where $\delta$ is the diffeomorphism given in equation \eqref{deflam}.  Immersed 2 dimensional integral manifolds of $\sysMCM$ (satisfying the independence condition) projected into $\HT$ are CMC-1 hypersurfaces.  (We also use diffeomorphism $\delta$ to define the singular bundle $\Vs$ for $ \sysMCM$ such  that $\delta^* \Vs = \tilV$.)
Then 
systems $\sysB$, $\sysM$ and $\sysMCM$ are related by the following commutative diagram, 
\begin{equation} 
\begin{gathered}
\tikzstyle{line} = [draw, -latex', thick]
\begin{tikzpicture}%
\node(SLCP) {$(\, SL(2,\C), \, \sysB \, )  $};
\node[below of=SLCP, node distance=20mm](T){$(\, SO(3,1)^+, \, \sysM \, )$};
\node[right of=T, node distance=60mm](B){$(\, \HT \times \CP^1, \, \sysMCM \, )$};
\path[line](SLCP) -- node[left] {$\sigma$} (T);
\path[line](SLCP) -- node[above] {$\ \ \Pi_{\rchi^{}_{\! \scriptstyle{\sysB}}} $} (B);
\path[line](T) -- node[below] {$\Pi_{\rchi^{}_{\!\! \scriptstyle{\sysM}}} $} (B);
\end{tikzpicture}
\end{gathered}
\label{CDEDSL1}
\end{equation}
which extends  \eqref{CDSL2modS} to include the corresponding systems and where the maps $\Pi_{\rchi^{}}$ are quotients by the corresponding Cauchy characteristics and where $\Pi_{\rchi^{}_{\! \scriptstyle{\sysB}}}= \Pi_{S^1}$ with $\Pi_{S^1}$ being given in equation \eqref{CDSL2modS}.
%
%
On account diagram \eqref{CDEDSL1}, and equations \eqref{defW} and \eqref{pbmc}, we note that for the singular system $\Vs$ of $\sysMCM$ the following holds,
\begin{equation}
\begin{alignedat}{4}
\Pi_{\rchi^{}_{\! \scriptstyle{\sysB}}}^* \Vs &= \CCR &= &\sigma^* W  
&&= \span_\C \{\, \alpha^1 + \overline{\alpha^1}, \,  \alpha^3,\, {\alpha^2} + \overline{\alpha^3} \,  \} \\
\Pi_{\rchi^{}_{\! \scriptstyle{\sysB}}}^* \Vtinf &= \Rinf &= &\sigma^* \Winf &&= \span_\C \{\, \alpha^3\, \} .
\end{alignedat}
\label{RandRinf}
\end{equation}


\medskip

We now identify the Darboux invariants for $\sysMCM$ by the following lemma.

\begin{lem} \label{DI1def} Let $\pi_2:\HT \times \CP^1\to \CP^1$ be the projection to the second factor in equation \eqref{deflam} and let $f:U \to \C$ be any holomorphic function defined on an open set $U \subset \CP^1$. Then $f \circ \pi_2 $  is a holomomorphic Darboux invariant for $\sysMCM$.
\end{lem}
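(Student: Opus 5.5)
The plan is to check the Darboux-invariant condition upstairs on $SL(2,\C)$, where everything is explicit, and then push the conclusion down to $\HT\times\CP^1$. We need $\rd(f\circ\pi_2)$ to be a section of $\Vtinf$; equivalently, $f\circ\pi_2$ is a first integral of $\Vtinf$, holomorphic in the sense that its differential has no component in $\overline{\Vtinf}$. Since $\Pi_{\rchi^{}_{\! \scriptstyle{\sysB}}}=\Pi_{S^1}$ is a submersion and $\Pi_{S^1}^*\Vtinf=\Rinf$ by \eqref{RandRinf}, it suffices to show
\[
\Pi_{S^1}^*\,\rd(f\circ\pi_2)=\rd(f\circ\pi_2\circ\Pi_{S^1})\in\Gamma(\Rinf)=\Gamma(\spanC\{\alpha^3\}).
\]
Injectivity of $\Pi_{S^1}^*$ on 1-forms then gives $\rd(f\circ\pi_2)\in\Gamma(\Vtinf)$.

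By \eqref{defpis1}, $\pi_2\circ\Pi_{S^1}=\Pi_\Borel$, which is the holomorphic map $A\mapsto[a,c]$. So I only need to show that $\rd(g\circ\Pi_\Borel)$ lies in $\spanC\{\alpha^3\}$ for any holomorphic $g$ on $\CP^1$.

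\emph{Chart $a\neq0$.} Use the affine coordinate $z=c/a$ and the coordinates $(a,b,c)$ with $Z_i$ as in \eqref{defZ}. Then $Z_1(c/a)=c/a-c/a=0$ and $Z_2(c/a)=0$, while $Z_3(c/a)=(1+bc)/a^2-bc/a^2=1/a^2$. Since $z$ is holomorphic, it is also annihilated by $\overline{Z_i}$. As $\{Z_i,\overline{Z_i}\}$ is dual to $\{\alpha^i,\overline{\alpha^i}\}$, this gives $\rd z=a^{-2}\alpha^3$, and hence $\rd(g(z))=g'(z)\,a^{-2}\alpha^3$.

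\emph{Chart $c\neq0$.} Here I would avoid a second coordinate computation and argue invariantly. The fibers of $\Pi_\Borel$ are the left cosets $A\Borel$, and these are exactly the integral manifolds of the left-invariant distribution spanned by $Z_1,Z_2$, the Lie algebra of $\Borel$. The conjugates $\overline{Z_i}$ kill every holomorphic function. So $\rd(g\circ\Pi_\Borel)$ annihilates $Z_1,Z_2,\overline{Z_1},\overline{Z_2},\overline{Z_3}$ and is therefore a multiple of $\alpha^3$ everywhere. This argument also covers the first chart.

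Holomorphicity of the invariant is immediate, since $\rd(f\circ\pi_2)$ pulls back to a multiple of $\alpha^3$, a $(1,0)$-form. Because $\Vtinf$ has rank one, $f\circ\pi_2$ is a genuine Darboux invariant.

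The only real obstacle is bookkeeping: making sure the Darboux-invariant definition in use is "$\rd I\in V^{(\infty)}$" rather than its conjugate, which depends on the choice of $\meta$ versus $\metabar$ in \eqref{pbmc}. Because $\sigma^*\meta=-2\alpha^3$ and $\Rinf=\spanC\{\alpha^3\}$, the computation above matches that convention.
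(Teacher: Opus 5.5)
Your proposal is correct and follows essentially the same route as the paper. Both arguments pull back along $\Pi_{S^1}$ to $SL(2,\C)$, use $\pi_2\circ\Pi_{S^1}=\Pi_\Borel$, and observe that a holomorphic $\Pi_\Borel$-basic 1-form annihilates $Z_1,Z_2$ and the $\overline{Z_i}$, so it is a multiple of $\alpha^3$, which spans $\Rinf$; your explicit check $\rd(c/a)=a^{-2}\alpha^3$ is a consistent confirmation but is not needed once the invariant argument is given.
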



\begin{proof} On $SL(2,\C)$ we have  
$\Rinf = \span_{\C} \{\, \alpha^3\, \}$ from \eqref{RandRinf}, and we will show that $d(  f \circ \pi_2 \circ \sigma ) \in \Rinf$, which would in turn imply by definition that $d(f \circ \pi_2)\in \Vtinf$. 

We first note that $\pi_2 \circ (\Pi_{SU(2)}\times \Pi_\Borel) = \Pi_\Borel:SL(2,\C) \to \CP^1$.   The vector fields  $Z_1$ and $Z_2$ are vertical for $\Pi_\Borel$ and  $\alpha^3(Z_1)=0, \alpha^3 (Z_2) = 0$ so that  $\alpha^3$ spans the $\Pi_\Borel$ semi-basic 1-forms on $SL(2,\C)$.

The function $f \circ \pi_2 \circ (\Pi_{SU(2)}\times \Pi_\Borel) = f \circ \Pi_\Borel $ is $\Pi_\Borel$-basic and holomorphic (since $\Pi_\Borel$ is holomorphic) on $SL(2,\C)$. 
Then
$d(f \circ \pi_2 \circ (\Pi_{SU(2)}\times \Pi_\Borel)) $ is $\Pi_\Borel$-basic and a holomorphic 1-form on $SL(2,\C)$, and is therefore a multiple of $ \alpha^3$ at each point, and thus lies in $\Rinf$.
\end{proof}

Alternatively, Lemma \ref{DI1def} can be stated as the pullback bundle identity $\Vtinf= \pi_2^*  (T_{1,0} \CP^1) $.



\subsection{Lifting the Prolongation.}
Define $\widehat\sigma: SL(2,\C)\to \F \times \C$ by $\widehat\sigma(A,z) = (\sigma(A), z)$.
let ${\sysJ}=\widehat{ \sigma}^* \sysI$ denote the pullback of $\sysI$ from $\F\times \C$  to $SL(2,\C)\times \C$. Computing using \eqref{prI} and \eqref{pbmc}, we have
\begin{equation}
\begin{aligned}
{\sysJ}\otimes \C &=&& \widehat \sigma^* (\sysI\otimes \C) \\ &=&&
\{\ \alpha^1+\overline{\alpha^1}, -2 \alpha^3 - p ( \alpha^2 + \overline{ \alpha^3}) , \, -2 \overline{\alpha^3} - \overline{p} ( \overline{\alpha^2} + { \alpha^3})  \ \} _{\diff}\, ,
\end{aligned}
\label{MAonSL2J}
\end{equation}
where $(\alpha^2 +\overline{\alpha^3})\wedge( \overline{\alpha^2} + { \alpha^3})$ is the independence condition.
This system has a Cauchy characteristic distribution $\rchi^{}_{\!\!\scriptstyle{{\sysJ}}}$ such that $\widehat \sigma_* \rchi^{}_{\!\!\scriptstyle{{\sysJ}}}= \rchi^{}_{\scriptstyle{{\sysI}}}$. Using equations \eqref{defchiI} and \eqref{o12inZ}, we obtain
\begin{equation}
\begin{aligned}
\rchi^{}_{\!\!\scriptstyle{{\sysJ}}}
& =&& {\rm span}_\R \{\frac{\ri}{2}(Z_1 -\overline{Z_1}) + 2\ri(p\partial_p -\pbar \partial_{\pbar}) \}\\
&=&&
 {\rm span}_\R \{\hatvecCY \ | \hatvecCY =\ri(Z_1-\overline{Z_1}) + 4\ri(p\partial_p -\pbar \partial_{\pbar}) \}.
 \end{aligned}
\label{chiI1}
\end{equation}
The infinitesimal generator of the action $p\mapsto \re^{4 \ri \theta} p $ is (with $p=r+\ri s$) is
$$
4 \ri (p \partial_p - \overline{p} \partial _{\overline p})
$$
Therefore the vector field $\hatvecCY$ in equation \eqref{chiI1}  is the infinitesimal generator of the action of $S^1$ in equation \eqref{SO2sub} on $SL(2,\C)\times \CP^1$ given by
\begin{equation}
\mu\left((A, p ); \left[ \begin{matrix} {\rm e}^{{\bf i} \theta} &0 \\ 0  &  {\rm e}^{-{\bf i} \theta} \end{matrix} \right]\right) \to  \left( A \left[ \begin{matrix} {\rm e}^{{\bf i} \theta} &0 \\ 0  &  {\rm e}^{-{\bf i} \theta} \end{matrix} \right], {\rm e}^{4{\bf i} \theta} p\right) . 
\label{S14A}
\end{equation}

Let $\pi_{\muS}: SL(2,\C) \times \C \to (SL(2,\C)\times \C) /\muS =(SL(2,\C)\times \C)/\rchi^{}_{\!\! \scriptstyle{{\sysJ}}}$ be the quotient map identifying the $S^1$ orbits of $\mu$.  To represent this quotient space we first let $\tau:(SL(2,\C) \times \C)\times \Borel \to SL(2,\C) \times \C$ be the action of the subgroup $\Borel$ in equation \eqref{SO2sub} given by
\begin{equation}
\tau((A,p), B_0 )= (A B_0, a^4 p), \quad\text{where } B_0=\left[ \begin{matrix} a & b \\ 0 & a^{-1} \end{matrix} \right] \in \Borel .
\label{deftau}
\end{equation}
As is well known (see for example \cite{wiki:Borel-Weil-Bott_theorem}) , $\Ofour \cong (SL(2,\C)\times \C)/\tau$ 
and so we let $\Pi_\tau: SL(2,\C) \times \C \to \Ofour$ represent $\pi_\tau:SL(2,\C) \times \C \to (SL(2,\C)\times \C)/\tau$.
We now have  the following lemma.

\begin{lem}  
The left $SL(2,\C)$ equivariant map $\Pi_{\muS}:SL(2,\C) \times \C \to \HT \times \Ofour$ given by
\begin{equation}
\Pi_{\muS}(A, p)=( \Pi_{SU(2)}(A), \Pi_{\tau}(A,p))
\label{Pimu}
\end{equation}
represents the quotient $\pi_{\muS}: SL(2,\C) \times \C \to (SL(2,\C)\times \C) /\muS$. 
\end{lem}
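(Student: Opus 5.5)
The plan is to mimic the homogeneous space argument used for $\Pi_{S^1}$ in \eqref{defpis1}, now with the fiber $\C$ carried along. First I check that $\Pi_\muS$ is $\muS$-invariant. Write $R_\theta=\mathrm{diag}(\re^{\ri\theta},\re^{-\ri\theta})\in S^1$. Since $R_\theta\in SU(2)$, we have $\Pi_{SU(2)}(AR_\theta)=\sfj^{-1}(AR_\theta R_\theta^\dagger A^\dagger)=\Pi_{SU(2)}(A)$. Since $R_\theta\in\Borel$ with diagonal entry $a=\re^{\ri\theta}$, the action \eqref{deftau} gives $\tau((A,p),R_\theta)=(AR_\theta,\re^{4\ri\theta}p)=\mu((A,p),R_\theta)$, so $\Pi_\tau$ is constant on $\mu$-orbits. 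Left $SL(2,\C)$-equivariance is clear: the left action $A\mapsto A_0A$ commutes with both right actions, and $\Pi_{SU(2)}$ is equivariant by \eqref{SL2onR4}. Consequently there is a canonical map $\lambda:(SL(2,\C)\times\C)/\muS\to\HT\times\Ofour$ with $\Pi_\muS=\lambda\circ\pi_\muS$, and it remains to show that $\lambda$ is a bijection. I will also show it is a diffeomorphism.

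For surjectivity, take $(x,\xi)\in\HT\times\Ofour$ and write $\xi=\Pi_\tau(A,p)$. I will use the $\Borel$-freedom in $\tau$ to move $A$ within its coset $A\Borel$ until $\Pi_{SU(2)}(AB_0)=x$. This is possible because the map $\Borel\to\HT$, $B_0\mapsto\Pi_{SU(2)}(AB_0)$, is onto. Indeed, $SL(2,\C)=\Borel\cdot SU(2)$ (Iwasawa/Gram--Schmidt), so $\Borel$ acts transitively on $\HT\cong SL(2,\C)/SU(2)$. Equivalently, $\Pi_{SU(2)}(AB_0)=\sfj^{-1}(AB_0B_0^\dagger A^\dagger)$, and $B_0B_0^\dagger$ ranges over all positive definite Hermitian matrices of determinant one. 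Replacing $(A,p)$ by $\tau((A,p),B_0)$ then gives a preimage of $(x,\xi)$.

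For injectivity, suppose $\Pi_\muS(A,p)=\Pi_\muS(A',p')$. Equality of the $\Ofour$ components gives $(A',p')=(AB_0,a^4p)$ for some $B_0\in\Borel$, using the fact that $\tau$ is free, which holds since right multiplication on $SL(2,\C)$ is free. Equality of the $\HT$ components gives $B_0\in SU(2)$, hence $B_0\in SU(2)\cap\Borel=S^1$ by \eqref{SO2sub}. The two points therefore lie on the same $\muS$-orbit.

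The step I expect to need the most care is promoting the bijection to a diffeomorphism, so that $\Pi_\muS$ genuinely represents the smooth quotient. I would argue this either by a dimension and rank count or by the homogeneous bundle structure. On the dimension side, both spaces have real dimension $8-1=7$, and $\Pi_\muS$ is a submersion. To see the latter, note that its kernel at a point contains the $\hatvecCY$ direction, and the differential restricted to $T SL(2,\C)$ modulo the fibers is surjective by the analogous statement for $\Pi_{S^1}$ in \eqref{defpis1}. On the bundle side, $\HT\times\Ofour\to\HT\times\CP^1$ is the associated line bundle over the homogeneous space already identified with $SL(2,\C)/S^1$, and $\lambda$ covers the diffeomorphism of \eqref{deflam0} while being linear on fibers. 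A smooth bijective submersion between manifolds of equal dimension is a diffeomorphism, which completes the argument.
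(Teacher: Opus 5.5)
Your proof is correct. Its overall structure matches the paper's: $\mu$-invariance, the canonical map $\widehat\lambda$, and injectivity via $\Borel\cap SU(2)=S^1$. It differs in how you prove surjectivity and in how far you take the conclusion.

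\textbf{Surjectivity.} The paper fixes the $\HT$-component. It picks $A_0$ over $h_0$ and notes that every $A_0U$ with $U\in SU(2)$ lies over the same point. It then uses that $\Pi_\tau$ restricted to $SU(2)\times\C$ already covers $\Ofour$ (the model $S^3\times_{S^1}\C$), transported by $\Pi_\tau(A_0U,p)=A_0\cdot\Pi_\tau(U,p)$. You instead fix the $\Ofour$-component and slide along the $\Borel$-orbit to reach any point of $\HT$. You use $SL(2,\C)=\Borel\cdot SU(2)$, made concrete by the upper-triangular Cholesky factorization of $A^{-1}\sfj(x)(A^{\dagger})^{-1}$. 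Both arguments rest on the same decomposition read from opposite sides: $SU(2)$ is transitive on $\CP^1=SL(2,\C)/\Borel$, and $\Borel$ is transitive on $\HT=SL(2,\C)/SU(2)$. Yours is more explicit and avoids appealing to the $S^3\times_{S^1}\C$ model.

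\textbf{Smoothness.} Your last step goes beyond the paper. By the paper's own convention, \emph{represents} only requires $\widehat\lambda$ to be a bijection. Yet the paper later uses $\widehat\lambda$ as a diffeomorphism to define $\sysICMC$. Your rank count therefore supplies a step the paper leaves implicit. Composing $\Pi_\mu$ with ${\rm Id}\times{\bf q}$ gives the submersion $\Pi_{S^1}$, and the $\partial_p$ directions map isomorphically onto the fibres of $\Ofour\to\CP^1$.

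\textbf{One correction.} In the injectivity step, what you need is that the fibres of $\Pi_\tau$ are exactly the $\tau$-orbits. That is just the definition of $\Pi_\tau$ representing $\pi_\tau$; freeness of $\tau$ plays no role in that deduction.
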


\begin{proof} In order to prove the lemma first note the $\Pi_{\muS}$ is $\mu$ invariant, and then we show that  the canonical map
$\widehat \lambda:  (SL(2,\C)\times \C) /\muS \to \HT \times \Ofour$ given by
\begin{equation}
\begin{aligned}
\widehat \lambda\left(\, \bq_{\muS}(A,p))\,  \right) &=&&\left( \Pi_{SU(2)}(A) , \Pi_{\tau}(A,p)  \right) \\
\end{aligned}
\label{defhlam1}
\end{equation} 
is a bijection.
For injectivity, suppose  $\widehat \lambda(\bq_{\muS}( A,p)) = \widehat \lambda (\bq_{\muS}(Y,q))$; 
we show $ \bq_{\muS}( A,p)=\bq_{\muS}( Y,q)$. In other words, there exists $S_\theta\in S^1$ such that $(Y,q) = (AS_\theta, {\rm e}^{4{\bf i} \theta} p)$. The condition $\Pi_{\tau}( Y, q) = \Pi_{\tau}(A,p)$ implies by equation \eqref{deftau} that there exists $B_0 \in B$ such that
\begin{equation}
(Y,q) = ( A B_0,   a^4  p)
\label{Eq1}
\end{equation}
with $B_0$ as in equation \eqref{deftau}. The condition $\Pi_{SU(2)}(Y)=\Pi_{SU(2)}(A)$ implies that
\begin{equation}
Y  = A C \text{ for some } C \in SU(2).
\label{Eq2}
\end{equation}
In order for equations \eqref{Eq1} and \eqref{Eq2} to hold we have $B_0=C \in SU(2) \cap B = S^1$. Therefore with $B_0\in S_1$, equation \eqref{Eq1} shows that  $\bq_{\muS}( A,p)=\bq_{\muS}( Y,q)$.

In order to show that $\widehat \lambda$ is surjective, we show that $\Pi_{\muS}$ is surjective. Let $h_0\in \HT$ and let $A_0\in SL(2,\C)$ satisfy $\Pi_{SU(2)}(A_0) = h_0$. Then $\Pi_{SU(2)}(A_0U) =h_0$ for all $U\in SU(2)$. We then claim the mapping $\widehat\Pi_{\tau}: SU(2) \times \C \to \Ofour$, defined by
$$
\widehat\Pi_{\tau}(U,p) =\Pi_{\tau}(A_0U,p) ,\qquad U \in SU(2), \ p \in \C,
$$
is surjective. Since $\Pi_{\tau}$ is equivariant $\Pi_{\tau}(A_0U,p)= A_0 \cdot \Pi_{\tau}(U,p)$ and
 $\Pi_{\tau}$ restricted to $SU(2)\times \C$ is surjective (being the representation of $\Ofour$ as an $S^1$ quotient of $S^3\times \C$) 
 we have $\widehat\Pi_{\tau}$ is surjective. Therefore $\widehat \lambda$ is surjective.
\end{proof}

Since the orbits of $\mu$ are the maximal connected integral manifolds of $\rchi^{}_{\!\!\scriptstyle{{\sysJ}}}$ then  
$\Pi_{ \rchi^{}_{\!\! \scriptstyle{{\sysJ}}}}=\Pi_\muS$ represents $\pi_{ \rchi^{}_{\!\!\scriptstyle{{\sysJ}}}}$. This also produces a representation of $\pi_{ \rchi^{}_{\scriptstyle{{\sysI}}}}$.

\begin{corollary} The map $\Pi_{ \rchi^{}_{\scriptstyle{{\sysI}}}}: \F \times \C \to \HT \times \Ofour$ defined by requiring 
$\Pi_{ \rchi^{}_{\scriptstyle{{\sysI}}}} \circ \widehat \sigma = \Pi_{ \rchi^{}_{\!\!\scriptstyle{{\sysJ}}}}$ 
is a representation of $\pi_ {\rchi^{}_{\! \scriptstyle{\sysI}}}: \F \times \C \to (\F \times \C) / \rchi^{}_{\! \scriptstyle{\sysI}}$.
\end{corollary}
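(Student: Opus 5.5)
The plan mirrors the proof of the earlier corollary that produced $\Pi_{SO(2)}$ from $\Pi_{S^1}$. First I will check that the defining relation $\Pi_{\rchi_{\sysI}} \circ \widehat\sigma = \Pi_{\rchi_{\sysJ}}$ determines a well-defined map on $\F\times\C$. The map $\widehat\sigma(A,p)=(\sigma(A),p)$ is a surjective two-to-one cover, and its fibers are $\{(A,p),(-A,p)\}$. Since $-I\in S^1$ (take $\theta=\pi$ in \eqref{SO2sub}), the action $\mu$ in \eqref{S14A} sends $(A,p)$ to $(-A,\re^{4\ri\pi}p)=(-A,p)$. Hence $\Pi_\muS=\Pi_{\rchi_{\sysJ}}$, being $\mu$-invariant, is constant on the fibers of $\widehat\sigma$, and so it descends uniquely to a smooth map $\Pi_{\rchi_{\sysI}}:\F\times\C\to\HT\times\Ofour$. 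Smoothness follows because $\widehat\sigma$ is a local diffeomorphism.

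Next I will show that $\Pi_{\rchi_{\sysI}}$ is invariant under the action $\rho$ of \eqref{defrho}. This uses the fact that $\sigma$ restricted to $S^1$ maps onto $SO(2)$ with $\sigma(\mathrm{diag}(\re^{\ri\theta},\re^{-\ri\theta}))=S_{2\theta}$, which is consistent with $\sigma_*\vecCY=2\di_{\omega^2_1}$ in \eqref{o12inZ}. Under this identification, the $p$-factor $\re^{4\ri\theta}$ in $\mu$ matches the factor $\re^{2\ri(2\theta)}$ in $\rho$. In other words, $\widehat\sigma(\mu((A,p),S_\theta))=\rho(\widehat\sigma(A,p),S_{2\theta})$, so $\widehat\sigma$ intertwines the two actions. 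Invariance of $\Pi_\muS$ under $\mu$ then gives invariance of $\Pi_{\rchi_{\sysI}}$ under $\rho$.

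It remains to show that the induced map $\widehat\delta:(\F\times\C)/\rho\to\HT\times\Ofour$ is a bijection. Surjectivity follows from surjectivity of $\Pi_\muS$, which was established in the preceding lemma. For injectivity, suppose $\Pi_{\rchi_{\sysI}}(F,p)=\Pi_{\rchi_{\sysI}}(F',p')$, and choose lifts $(A,p)$ and $(A',p')$ under $\widehat\sigma$. By the preceding lemma these lifts lie in the same $\mu$-orbit, so $(A',p')=(AS_\theta,\re^{4\ri\theta}p)$ for some $S_\theta\in S^1$. Applying $\widehat\sigma$ and using the intertwining relation, $(F',p')$ lies in the $\rho$-orbit of $(F,p)$.

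Finally, since $\rchi_{\sysI}=\widehat\sigma_*\rchi_{\sysJ}$ and the $\rho$-orbits are the maximal connected integral manifolds of $\rchi_{\sysI}$, the quotient by $\rho$ is exactly $\pi_{\rchi_{\sysI}}$. The only step needing genuine care is the factor-of-two bookkeeping in the intertwining relation: the $\re^{4\ri\theta}$ versus $\re^{2\ri\theta}$ scalings and the fact that the kernel $-I$ acts trivially on $p$. The rest is formal.
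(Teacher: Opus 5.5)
Your proposal is correct and follows the route the paper intends. The paper gives no separate proof: it treats the corollary as immediate from the preceding lemma together with the fact that $\widehat\sigma_*$ carries the Cauchy characteristics of $\mathcal J$ to those of $\mathcal I$, just as in the earlier corollary producing $\Pi_{SO(2)}$ from $\Pi_{S^1}$. Your checks make that implicit descent explicit, and they are accurate: $-I\in S^1$ acts trivially on $p$, and $\sigma(\mathrm{diag}(\re^{\ri\theta},\re^{-\ri\theta}))=S_{2\theta}$, so $\widehat\sigma$ intertwines $\mu$ with $\rho$.
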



The system $\tilsysICMC=\sysI/\rchi^{}_{\! \scriptstyle{\sysI}}$ on $(\F \times \C) / \rchi^{}_{\! \scriptstyle{\sysI}}$ was shown in Section \ref{DIPR} to be a Darboux integrable elliptic decomposable system. The system $\sysICMC$ on $\HT\times \Ofour$ defined by $ \widehat \lambda ^* \sysICMC= \tilsysICMC$ with the diffeomorphism $ \widehat \lambda $ in equation \eqref{defhlam1} of course has the same properties. Furthermore since $\tilsysICMC$ is the prolongation of  $\tilsysMCMC$, we have $\sysICMC$ is the prolongation of $\sysMCMC$.  The corresponding singular system $V$ for $\sysICMC$ is defined by $\widehat \lambda^* V = \prV $.

Let $\widehat{R} = \Pi_{ \rchi^{}_{\!\!\scriptstyle{{\sysJ}}}}^* \widehat{V}$ which on account of 
$\Pi_{ \rchi^{}_{\!\!\scriptstyle{{\sysJ}}}}=\Pi_{ \rchi^{}_{\scriptstyle{{\sysI}}}}\circ\sigma$ and equation \eqref{defPB1} satisfies $\widehat{R}  = \widehat \sigma^* \widehat{W}$.  
Therefore from equation \eqref{defWpr} and \eqref{pbmc}, 
$$
\widehat{R} = \widehat \sigma^* \widehat{W} =\spanC \{  \alpha^1 + \overline{\alpha^1}, \,  
-2 \alpha^3-p( {\alpha^2} + \overline{\alpha^3}),
-2 \overline{\alpha^3}-\pbar( \overline{\alpha^2} + {\alpha^3}),
 {\alpha^2} + \overline{\alpha^3}, \rd p-2 p(\alpha^1-\overline{\alpha^1})\}.
$$
We also have from equation \eqref{flagWpr} and \eqref{pbmc},
\begin{equation}
\prRinf=\sigma^* \prW^{(\infty)}= \Pi_{ \rchi^{}_{\!\!\scriptstyle{{\sysJ}}}}^*\widehat{V}^{(\infty)} =  \spanC \{ \alpha^3 ,  \rd p -4p \alpha^1 \}
\label{defhatRinf}
\end{equation}
which will allow us to determine the Darboux invariants for $\sysICMC$ in a similar way to those of $\sysMCMC$ in Lemma \ref{DI1def}.

\begin{lem} 
Let $\pi_2:\HT \times \Ofour\to \Ofour$ be the projection to the second factor in equation \eqref{deflam} and let $f:U \to \C$ be a holomorphic function defined on an open set $U \subset \Ofour$. Then $f \circ \pi_2 $  is a holomomorphic Darboux invariant for $\sysICMC$.
\end{lem}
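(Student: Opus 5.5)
The argument follows the proof of Lemma \ref{DI1def}. We work upstairs on $SL(2,\C)\times\C$ and show that $\rd(f\circ\pi_2\circ\Pi_{\muS})$ lies in $\prRinf=\spanC\{\alpha^3,\ \rd p-4p\alpha^1\}$ from \eqref{defhatRinf}. Since $\Pi_{\muS}$ is a submersion with $\Pi_{\muS}^*\widehat V^{(\infty)}=\prRinf$, this gives $\rd(f\circ\pi_2)\in \widehat V^{(\infty)}$, so $f\circ\pi_2$ is a Darboux invariant for $\sysICMC$. The function is holomorphic on $\HT\times\Ofour$ in the relevant sense because $\pi_2\circ\Pi_{\muS}=\Pi_\tau$ is holomorphic.

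\textbf{Step 1.} Observe from \eqref{Pimu} that $\pi_2\circ\Pi_{\muS}=\Pi_\tau$. Hence $F:=f\circ\pi_2\circ\Pi_{\muS}=f\circ\Pi_\tau$, and since $\Pi_\tau$ is a holomorphic submersion, $F$ is a holomorphic function on an open subset of the complex manifold $SL(2,\C)\times\C$. Consequently $\rd F$ is a $(1,0)$-form, a combination of $\alpha^1,\alpha^2,\alpha^3,\rd p$ with no conjugate terms.

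\textbf{Step 2.} Identify the fibres of $\Pi_\tau$. They are the orbits of the holomorphic right $\Borel$-action $\tau$ in \eqref{deftau}. Differentiating $\tau$ along the generators $E_1,E_2$ of the Lie algebra of $\Borel$, with $a=e^t$ for $E_1$, gives the holomorphic vertical vector fields
\[
T_1=Z_1+4p\,\partial_p,\qquad T_2=Z_2 .
\]
Because $F$ is $\Borel$-invariant, we get $T_1F=T_2F=0$. Together with $\overline{\partial}F=0$, this says that the $(1,0)$-form $\rd F$ annihilates $T_1$ and $T_2$.

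\textbf{Step 3.} Among $(1,0)$-forms, those annihilating $T_1$ and $T_2$ form a rank-two space. We check that $\alpha^3$ and $\rd p-4p\alpha^1$ span it. Indeed $\alpha^3(Z_1)=\alpha^3(Z_2)=0$ and $\alpha^3(\partial_p)=0$, so $\alpha^3$ kills both fields. Also $(\rd p-4p\alpha^1)(Z_1+4p\partial_p)=-4p+4p=0$ and $(\rd p-4p\alpha^1)(Z_2)=0$. The two forms are clearly independent. Therefore $\rd F\in\spanC\{\alpha^3,\rd p-4p\alpha^1\}=\prRinf$, which completes the argument.

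The main point needing care is the matching of the weight in Step 2: the exponent $a^4$ in $\tau$ must produce exactly the coefficient $4p$ that appears in $\rd p-4p\alpha^1$. This should be verified directly from \eqref{deftau}. One should also note that $\widehat V^{(\infty)}$ is the reduction of $\prRinf$ by the Cauchy characteristics, so membership upstairs descends, exactly as in the proof of Lemma \ref{DI1def}.
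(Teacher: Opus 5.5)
Your proof is correct and follows the paper's argument: you pass to $SL(2,\C)\times\C$, use $\pi_2\circ\Pi_{\mu}=\Pi_\tau$, and observe that $\prRinf$ consists of the $\Pi_\tau$-semi-basic $(1,0)$-forms, so that $\rd(f\circ\Pi_\tau)\in\prRinf$. Your explicit computation of the $\tau$-generators $Z_1+4p\,\partial_p$ and $Z_2$, together with the rank count, makes precise what the paper only asserts in one line: that $\prRinf$ is the \emph{entire} $\Pi_\tau$-semi-basic $(1,0)$ bundle, not merely contained in it.
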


\begin{proof} As in the proof in Lemma \ref{DI1def} we work on $SL(2,\C) \times \C$ and 
show that $f \circ \pi_2$ is a Darboux invariant for $\sysICMC$ if and only if the $(1,0)$ form $d(f \circ \pi_2 \circ (\Pi_{SU(2)}\times \bq_{\tau} )$  on $SL(2,\C)\times \C$
lies in  $\prRinf$.

Since $\pi_2 \circ (\Pi_{SU(2)}\times \bq_{\tau})  = \bq_{\tau}$ we have 
$f \circ \pi_2 \circ (\Pi_{SU(2)}\times \bq_{\tau})=f \circ \bq_{\tau}$. On the other hand the bundle $\prRinf$ in equation \eqref{defhatRinf}
is $\bq_{\tau}$ semi-basic, and so $d(f \circ \bq_{\tau} )$ lies in $\prRinf$ at each point.
\end{proof}


We now summarize the relations between our various differential systems by the following commutative
diagram which provide an explicit representation of  the diagram \eqref{findH1} augmented with the maps $\widehat \sigma$ and $\sigma$,
\begin{equation*} 
\begin{gathered}
\tikzstyle{line} = [draw, -latex', thick]
\begin{tikzpicture}
\node(SLCP) {$\left( SL(2,\C)\! \times\! \C ,\sysJ,   \prR  \right)$};
\node[right of=SLCP, node distance=55mm](T1){$\left( \F\! \times\! \C   , \sysI, \prW   \right)$};
\node[right of=SLCP, node distance=113mm](T2){$\left( \HT\! \times\! \Ofour  , \sysICMC ,\widehat V\, \right)$};
\node[below of=SLCP, node distance=20mm](B1) {$\left( SL(2,\C) ,{\mathcal B},  \CCR \right)$};
\node[below of=T1, node distance=20mm](B2) {$\left( \F  , \sysM ,W \right)$};
\node[below of=T2, node distance=20mm](B3) {$\left( \HT\times \CP^1  , \sysM_{CMC}, V  \right)$};

\draw [->, line width=0.3mm] (SLCP) to [out=20, in=160] node[midway, sloped, above] {$\Pi_ {\rchi^{}_{\!\!\scriptstyle{\sysJ}}}$}(T2);
\draw [->, line width=0.3mm] (B1) to [out=340, in=200] node[midway, sloped, above] {$ \Pi_{\rchi^{}_{\! \scriptstyle{\sysB}}}$}(B3);

\path[line](T1) -- node[below] {$\Pi_{\rchi^{}_{\! \scriptstyle{\sysI}}}$} (T2);
\path[line](B2) -- node[below] {$\Pi_{\rchi^{}_{\!\! \scriptstyle{\sysM}}}$} (B3);

\path[line](SLCP) -- node[below] {$\widehat \sigma
$} (T1);
\path[line](SLCP) -- node[left] {${\pi_1}$} (B1);
\path[line](T1) -- node[left] {$\pi_{1}$} (B2);
\path[line](T2) -- node[left] {${\rm Id} \times {\bf q}$} (B3);
\path[line](B1) -- node[below] {$\sigma$} (B2);
\end{tikzpicture}
\end{gathered}
\end{equation*}
The map ${\bf q}: \Ofour \to \CP^1$ is the standard bundle projection map while all the maps of the form $\Pi_{\rchi^{}}$ are reduction by the corresponding Cauchy characteristics. For convenience we also note that 
$\Pi_{\rchi^{}_{\!\scriptstyle{\sysB}}}=\Pi_{S^1}$ with $\Pi_{S^1}$ given in equation \eqref{CDSL2modS} and that $\Pi_{\rchi^{}_{\!\! \scriptstyle{\sysJ}}}=\Pi_{\mu}$ with $\Pi_{\mu}$  in equation \eqref{Pimu}.


%% file: section4.tex
\section{The Quotient Representation of $\sysMCMC$ and ${\mathcal I}_{CMC}$.}\label{findQR}

In Theorem A of Bryant's original paper \cite{BryantCMC}, he shows how to lift integral manifolds of $\sysMCMC$ (or $\tilsysMCMC$) which correspond with CMC-1 surfaces in $\HT$  to holomorphic immersed curves in $SL(2,\C)$ whose tangent space lies in the null cone of the Killing form using an equation of Lie type for SU(2).  The  process of lifting integral manifolds using equations of Lie type is used in the so-called reconstruction problem for lifting integral manifolds in quotient systems, see Section 6 in \cite{AF1}. In particular, lifting integral manifolds from a quotient system $\sysI/G$ to the original system $\sysI$ requires solving a differential system of Lie type on the Lie group $G$. We now identify the differential system alluded to in Bryant \cite{BryantCMC} for which $\sysM_{CMC}$ is the $SU(2)$ quotient.

\subsection{The Quotient Representation}

Let  ${\mathcal H}$ be the holomorphic  rank 2 (over $\C$) Pfaffian system on $SL(2,\C) \times \CP^1$ given in
homogeneous coordinates $(p,q) $ for $\CP^1$  by
\begin{equation}
\sysH= {\it span}_\C \{  \ pq \alpha^2 - q^2 \alpha^1, \  pq \alpha^3 +p^2 \alpha^1,\  q^2 \alpha^3 +p^2 \alpha^2\ \} \ .
\label{Hdef}
\end{equation}
On the open set $U =\{ (A,[p,q]) \in SL(2,\C)\times \CP^1 , \ q\neq 0 \}$ let $u= pq^{-1}$ be the corresponding local coordinate on $\CP^1$. The system $\sysH$ from equation \eqref{Hdef} then takes the form,
\begin{equation}
{\mathcal H}\vert_U  = \spanC \{ \eta^1= \alpha^1 - u \alpha^2,\ \eta^2= \alpha^3 + u^2  \alpha^2 \}_{{\rm diff}}.
\label{HU}
\end{equation}
The Pfaffian system $\sysH$ is invariant under the diagonal free right action of $SL(2,\C)$ on $SL(2,\C) \times \CP^1$ given by
\begin{equation}
\Delta( (A,[p,q]) ; C) = (AC, [p,q] C ) , \quad C\in SL(2,\C). 
\label{SL2action}
\end{equation}

The construction of the system $\sysH$ is given in Appendix \ref{A2} where it is shown (see Theorem \ref{defsysH})
that the holomorphic integral curves of $\sysH$ which are transverse to the projection $\pi:SL(2,\C) \times \CP^1\to SL(2,\C)$ 
 project to holomorphic curves in $SL(2,\C)$ whose tangent vector lie in the null cone with respect to the Killing form on $T_{1,0}SL(2,\C)$ as in \cite{BryantCMC}.  This leads to have the quotient representation theorem for $\sysMCMC$. 

\begin{thm} \label{TMAreduction} Let $\sysE$ be the real verson of ${\mathcal H}$ on $SL(2,\C)\times \CP^1$ in equation \eqref{Idef}, and let  $\Delta_{SU(2)}$ be the restriction of the right diagonal action of $SL(2,\C)$ in equation \eqref{SL2action} to $SU(2)$. Then
\begin{equation}
\sysE/{\Delta_{SU(2)}} =  \sysM_{CMC}.
\label{MAreduction}
\end{equation}
\end{thm}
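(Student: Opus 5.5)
The plan is to represent the quotient $(SL(2,\C)\times\CP^1)/\Delta_{SU(2)}$ explicitly and compare it with the representation $\Pi_{S^1}$ of $SL(2,\C)/S^1\cong\HT\times\CP^1$ from the earlier lemma. The action $\Delta_{SU(2)}$ is free, and $SU(2)$ acts transitively on $\CP^1$ by $[p,q]\mapsto[p,q]C$. Hence every orbit meets the slice $\Sigma=SL(2,\C)\times\{[0,1]\}$. The stabilizer of $[0,1]$ in $SU(2)$ is the diagonal subgroup $S^1$ of \eqref{SO2sub}, so orbits meet $\Sigma$ exactly in right $S^1$-orbits. I will therefore use the $SU(2)$-invariant map
$$\Phi(A,[p,q])=\Pi_{S^1}\bigl(A\,C_{[p,q]}\bigr),$$
where $C_{[p,q]}\in SU(2)$ is any element with $[p,q]C_{[p,q]}=[0,1]$ (the choice is unique up to $S^1$). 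Concretely, $\Phi(A,[p,q])=\bigl(\Pi_{SU(2)}(A),\,[A(q,-p)^T]\bigr)$ up to the conventions of \eqref{defPiB}; note that $\Pi_{SU(2)}(AC)=\Pi_{SU(2)}(A)$ for $C\in SU(2)$. Then $\Phi\vert_\Sigma=\Pi_{S^1}$, and $\Phi$ represents the quotient by $\Delta_{SU(2)}$. The claim \eqref{MAreduction} then becomes: a form $\theta$ on $\HT\times\CP^1$ satisfies $\Phi^*\theta\in\sysE$ if and only if $\theta\in\sysMCMC$. Because $\Phi$ restricted to $\Sigma$ is $\Pi_{S^1}=\Pi_{\rchi_\sysB}$, it suffices to compute, along $\Sigma$, the $\Delta_{SU(2)}$-semibasic forms of $\sysE$. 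I will then compare their pullbacks to $\Sigma\cong SL(2,\C)$ with $\sysB=\Pi_{\rchi_\sysB}^*\sysMCMC$ in \eqref{MAonSL2}.

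For the 1-forms, I work in the chart $u=p/q$, where $\sysH$ is spanned by $\eta^1=\alpha^1-u\alpha^2$ and $\eta^2=\alpha^3+u^2\alpha^2$, as in \eqref{HU}. Let $\xi=\begin{bmatrix} \ri\theta & \beta\\ -\bbar & -\ri\theta\end{bmatrix}\in\fsu(2)$. At $u=0$ its infinitesimal generator has $\alpha$-components $(\ri\theta,\beta,-\bbar)$ and a $\partial_u$-component that is a nonzero multiple of $\bbar$. Evaluating the real and imaginary parts of $\eta^1,\eta^2$ on these generators gives $\eta^1+\overline{\eta^1}\mapsto \ri\theta-\ri\theta=0$. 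By contrast, $\eta^2\mapsto-\bbar$, $\ri(\eta^1-\overline{\eta^1})\mapsto-2\theta$, and $\overline{\eta^2}\mapsto-\beta$ are independent functionals on $\fsu(2)$. So the semibasic part of $\sysE$ in degree one is exactly the real line spanned by $\eta^1+\overline{\eta^1}$. This form is $SU(2)$-invariant, since $\sysE$ is invariant and the semibasic line is one-dimensional with the invariant real structure. On $\Sigma$ it restricts to $\alpha^1+\overline{\alpha^1}$, which by \eqref{MAonSL2} is the pullback of the contact 1-form of $\sysMCMC$.

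For the 2-forms, I compute $\rd\eta^2\equiv \alpha^3\wedge(\alpha^2+\dots)$ modulo $\sysH$, using \eqref{dalphaeq} together with $\rd u$ terms. I then look for combinations $\theta$ in the degree-2 part of $\sysE$, namely $\eta^i\wedge(\cdot)$, their conjugates, $\rd\eta^i$ and $\rd\overline{\eta^i}$, for which $\iota_\xi\theta$ vanishes for all $\xi\in\fsu(2)$. On $\Sigma$, the term involving $\rd u$ must be eliminated using $\eta^2$ and $\overline{\eta^2}$, whose values on generators are $-\bbar$ and $-\beta$. The expected outcome is that the semibasic 2-forms restrict to $\Sigma$ as combinations of $\Omega=\alpha^3\wedge(\alpha^2+\overline{\alpha^3})$, $\overline\Omega$, and multiples of $\alpha^1+\overline{\alpha^1}$. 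This is consistent with \eqref{dal1al1_}. Together with the 1-form computation, this gives $\Phi^*\sysMCMC\subset\sysE$ restricted to $\Sigma$, and by invariance everywhere. I also need the reverse inclusion, namely that the quotient contains nothing more. For this I will use a rank count: $\sysE/\Delta_{SU(2)}$ has rank 1 in degree one by the computation above, and in degree two the semibasic part is spanned by $\Omega$, $\overline\Omega$ and multiples of the 1-form.

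I expect the degree-two computation to be the main obstacle. The generators have $\partial_u$-components, so $\rd\eta^2$ must be corrected carefully by multiples of $\eta^2$ and $\overline{\eta^2}$ wedged with suitable 1-forms to become semibasic. If that bookkeeping becomes messy, I would fall back on the structural route. Here the quotient is generated by invariant semibasic forms (as in \cite{AF1}). Since $\alpha^1+\overline{\alpha^1}$ generates the quotient 1-forms, its exterior derivative, computed in \eqref{dal1al1_}, already lies in $\sysE/\Delta_{SU(2)}$. It then remains to show that the individual forms $\Omega$ and $\overline\Omega$, rather than just their sum, lie in the quotient. I would obtain these as the pullback to $\Sigma$ of $\rd\eta^2$ with the $\rd u$ term removed, and compare the result with $\sysB$.
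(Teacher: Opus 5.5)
Your strategy is sound, and it organizes the argument differently from the paper.

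The paper works on the whole chart $q\neq 0$. It takes $\zeta(A,u)=A\,R(u)$ with $R(u)\in SU(2)$ and $[u,1]R(u)=[0,1]$, so that $\Pi_{\Delta_{SU(2)}}=\Pi_{S^1}\circ\zeta$, and checks $\zeta^*\sysB\subset\sysE$ by a $u$-dependent computation. It then gets the reverse inclusion from Corollaries 7.3--7.4 of \cite{AF1}: the quotient is algebraically generated by one 1-form and two 2-forms, as is $\sysMCMC$.

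You instead work only along $\Sigma=\{u=0\}$, where $R(0)=I$. Semibasic forms are determined by their pullback to $\Sigma$, because $T\Sigma$ and the orbit directions together span the tangent space ($SU(2)$ is transitive on $\CP^1$). Combined with $SU(2)$-invariance, this propagates the result off $\Sigma$. You get the reverse inclusion by computing the semibasic part of $\sysE$ directly. Your route is lighter and self-contained; the paper's gives formulas on the whole chart. Degrees $\ge 3$ need no separate check, since both $\sysE$ and $\sysMCMC$ contain all forms of degree $\ge 3$.

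Your degree-one computation is correct. One small point: $\eta^1+\overline{\eta^1}$ is semibasic only at $u=0$ and is not $SU(2)$-invariant. This is harmless because you only use it on $\Sigma$.

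The degree-two step, however, is aimed at the wrong form. Modulo $\sysH$ no $\alpha^3$ term survives, since $\alpha^3\equiv -u^2\alpha^2$. In fact $\rd\eta^2\equiv 2u\,\rd u\wedge\alpha^2$ mod $\sysH$, and at $u=0$ one has exactly $\rd\eta^2=2\eta^1\wedge\eta^2$. So $\rd\eta^2$ contributes nothing along $\Sigma$, and your fallback would produce $2\alpha^1\wedge\alpha^3$ rather than $\Omega$.

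The form you need is $\rd\eta^1$, and its $\rd u$ term must be \emph{replaced}, not deleted. At $u=0$, $\rd\eta^1=\alpha^2\wedge(\rd u-\alpha^3)$. Both $\rd u-\eta^2$ and $\alpha^2+\overline{\eta^2}$ annihilate the $\fsu(2)$ generators there, so
\[
\rd\eta^1\equiv -(\rd u-\eta^2)\wedge(\alpha^2+\overline{\eta^2}) \quad \text{modulo } \overline{\eta^2}.
\]
The right-hand side is semibasic and pulls back to $\Sigma$ as $\alpha^3\wedge(\alpha^2+\overline{\alpha^3})=\Omega$. Deleting $\rd u$ instead gives $\alpha^3\wedge\alpha^2$, which is not in $\sysB$.

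With this correction, the semibasic part of the degree-two piece of $\sysE$ along $\Sigma$ is $(\eta^1+\overline{\eta^1})\wedge(\text{semibasic 1-forms})$ plus the span of the form above and its conjugate. This restricts isomorphically onto the degree-two part of $\sysB$, giving both inclusions as you intended. It is the same identity the paper uses, namely $\rd\eta^1\equiv-\rd u\wedge\alpha^2$ mod $\sysH$.
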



\begin{proof}  In order to prove this theorem we represent the quotient map by $\Pi_{\Delta_{SU(2)}}: SL(2,\C) \times \CP^1 \to \HT \times \CP^1$ as in equation \eqref{PiAp} from Lemma \ref{Lpsimap}. The action $\Delta_{SU(2)}$  of $SU(2)$ is easily checked to be transverse to real rank 4 Pfaffian system $\sysE$. The system $\sysE$ is generated by 4 independent 1-forms and a pair of independent 2-forms,
and so by Corollaries 7.3 and 7.4  in \cite{AF1}, the quotient system $\sysE/\Delta_{SU(2)}$ is generated algebraically by a one-form and a pair of 2-forms. Since ${\mathcal M}_{CMC}$ has this property, the proof of \eqref{MAreduction} requires we show
\begin{equation}
\Pi_{\Delta_{SU(2)}} ^*  ({\mathcal M}_{CMC} ) \subset \sysE .
\label{PBP1}
\end{equation}
In order to verify equation \eqref{PBP1}, we need use the following characterizing property of $\sysM_{CMC}$,
\begin{equation*}
\Pi_{S^1}^* ({\mathcal M}_{CMC} ) =\sysB ,
\end{equation*}
where $\Pi_{S^1}:SL(2,\C) \to \HT \times \CP^1$ is given in equation \eqref{defpis1} and the $\Pi_{S^1}$ quotient is by Cauchy characteristics $\rchi^{}_{\! \scriptstyle{{\sysB}}}$.

Therefore we need to find a map $\zeta: SL(2,\C)\times \CP^1 \to SL(2,\C)$ such that
\begin{equation}
\Pi_{\Delta_{SU(2)}}= \Pi_{S^1} \circ \zeta
\label{simpPi}
\end{equation}
and 
\begin{equation}
\zeta ^* \sysB \subset \sysE .
\label{PBP2}
\end{equation}
With $\zeta$ satisfying equations \eqref{simpPi} and \eqref{PBP2},  we have 
$$\Pi_{\Delta_{SU(2)}}^* \sysMCMC = \zeta^*\circ \Pi_{S^1}^* \ \sysMCMC = \zeta^*\ \sysB \subset \sysE.$$ 
Thus equations \eqref{simpPi} and \eqref{PBP2}  holding proves Theorem  \ref{TMAreduction}. 

In terms of a diagram we construct a function $\zeta $  to make the commutative diagram 
\begin{equation} 
\begin{gathered}
\tikzstyle{line} = [draw, -latex', thick]
\begin{tikzpicture}
\node(SLCP) {$  SL(2,\C) \times \CP^1  $};
\node[right of=SLCP, node distance=80mm](B) {$\left(SL(2,\C), \sysB=\Pi_{S^1}^* \sysMCMC \right)$};
\node[right of=I1, node distance=80mm](I2) {$\left(\HT \times \CP^1, \sysMCMC \right)$};
\path[draw , dashed,->](SLCP) -- node[above] {$\zeta $} (B);
\path[line](B) -- node[right] {$ \Pi_{S^1}$} (I2);
\path[line](SLCP) -- node[right] {\raisebox{10pt}{$\Pi_{\Delta_{SU(2)}}$}} (I2);
\end{tikzpicture}
\end{gathered}
\label{CD1}
\end{equation}
in order to compute $\Pi_{\Delta_{SU(2)}}^* \sysMCMC$.

A function $ \zeta$ can be constructed by using equation \eqref{defgam} (or equation \eqref{defalpha}) in Lemma \ref{PreLpsimap} which is used to construct the the quotient map $\Pi_{\Delta_{SU(2)}}$  in Lemma \ref{Lpsimap}. To be specific, let $U=\{ (A,[p,q] ) \in SL(2,\C) \times \CP^1 \ , \ q \neq 0\}$ and let $u=pq^{-1}$ be the corresponding local coordinate on $\CP^1$. We then let $\zeta:U \to SL(2,\C)$ be 
$$
\zeta(A,u) = A R(u)
$$
where  $R(u)$ satisfies $ [u,1] \cdot R(u) = [0,1], \ R(u) \in SU(2)$ on $U$. This is the $AR$ term in the right hand side of equation \eqref{defgam} (or equation \eqref{defalpha}). Solving$ [u,1] \cdot R(u) = [0,1]$ for $R(u)$ gives,
\begin{equation}
 \zeta( A, u ) =  A \ \left( \frac{1}{\sqrt{1+|u|^2}} 
 \left[ \begin{matrix} 1 & \bar u \\ -u & 1 \end{matrix} \right]\ \right).
 \label{deltamap}
 \end{equation}
The proof of Lemma \ref{Lpsimap} shows $\Pi_{\Delta_{SU(2)}}(A,u) = \Pi_{S^1}\circ \zeta(A,u)$ on $U$. Hence condition \eqref{simpPi} is satisfied.

We now show $\zeta^* \sysB \subset \sysE$ (equation \eqref{PBP2}) holds where $\zeta $ is given in equation \eqref{deltamap}, $\sysB$ is determined by equation \eqref{MAonSL2}, and $\sysE$ is determined by the real version of $\sysH$ from equation \eqref{HU}. Computing using equations \eqref{deltamap} and \eqref{gdg}  we have
 $$
 \zeta^* (\alpha^1 + {\overline \alpha^1}) = \frac{1-|u|^2}{1+|u|^2}( \eta^1 +\overline{\eta^1}) - \frac{1}{1+|u|^2}( \bar u \eta^2 + u \overline{\eta^2} ).
$$
where $\eta^1$ and $\eta^2$ are given in equation \eqref{HU}.  Thus $\zeta^*  (\alpha^1 + {\overline \alpha^1})\in \sysE$.  Since $\zeta^* \rd \, (\alpha^1 +{\overline \alpha^1})\in \sysE$ equation \eqref{dal1al1_} shows $\zeta^*(\Omega+\overline{\Omega}) \in \sysE$ where $\Omega= \alpha^3 \wedge (\alpha^2 + \overline{\alpha^3})$.

To finish showing $\zeta^* \sysB \subset \sysE$ we need to show $\zeta^*(\ri(\Omega-\overline{\Omega})) \in \sysE$. Using $\Omega= \alpha^3 \wedge (\alpha^2 + \overline{\alpha^3})$, and equations \eqref{gdg}, \eqref{deltamap}, we find
\begin{equation}
\zeta^*(\ri( \Omega-\overline{\Omega} ))= \ri(-2 \rd u \wedge \alpha^2  -2 \rd \bar u \wedge \overline{\alpha^2})     \qquad \mod \ \sysE \ .
\label{ppo1}
\end{equation}
Equation \eqref{HU} can be written
\begin{equation*}
\rd  \eta^1 = - \rd u \wedge \alpha^2 \qquad \mod \sysH
\end{equation*}
which together with equation \eqref{ppo1} gives
$$
\zeta^* ({\ri} (\Omega-  \overline{\Omega})) = 2{\ri}  \rd\, (\eta^1 - \overline {\eta^1}) \quad \mod  \sysE .
$$
Since ${\ri}  \rd \, (\eta^1 - \overline {\eta^1}) \in \sysE$, we have $\zeta^* ({\ri} (\Omega-  \overline{\Omega})) \in \sysE$ and so $\zeta ^* \sysB \subset \sysE$ and  $ \sysE/\Delta_{SU(2)} = {\mathcal M}_{CMC}$.
 \end{proof}

By prolongation $ \sysICMC = pr(\sysE)/{\widehat \Delta}_{SU(2)}$ where the prolongation $pr(\sysE)$ and the diagonal action $\widehat \Delta$ of $SU(2)$  are defined on $SL(2,\C) \times \Ofour$. The system  $pr(\sysE)$ is the real version of the prolongation of $\sysH$ in equation \eqref{HU} which is given locally by 
$$
pr( \sysH)\vert_U= \{\ \alpha^1 - u \alpha^2,\  \alpha^3 + u^2  \alpha^2  ,\ du - s \alpha^2\ \}
$$
where $U$ us the open set $(A,u,s)\subset SL(2,\C)\times \Ofour $ with $s$ the fiber coordinate of $\Ofour$.  The map $\widehat \zeta:U \to SL(2,\C) \times \C$ (extending $\zeta$ above) is given in coordinates by,
$$
\widehat \zeta( A, u, s ) = \left(\zeta(A,u), p= \frac{2s}{(1+|u|^2)^2} \right)
$$
and $\widehat \zeta^* \sysBpr\subset \sysEpr$. In otherwords $\sysEpr/{\widehat \Delta}_{SU(2)} = \sysMCMCpr$

\medskip

\section{Equivalence of CMC-1 equation to Liouville's equaiton}\label{findEQ}

The Pfaffian system ${\mathcal H}$ in equation \eqref{Hdef} has derived system
\begin{equation}
\sysH ' = {\rm span}_{\C } \{ \theta = \ 2pq \alpha^1 -p^2 \alpha^2 + q^2 \alpha^3 \ \}  ={\rm span}_{\C } \{ 2u \alpha^1 - u^2 \alpha^2 + \alpha^3 \}_{u=pq^{-1}}.
\label{defHp}
\end{equation}
leading to the ranks of the derived flag of $\sysH$ being $[2,1,0]$. 
According to  \cite{Yama}, $\sysH$ is (locally) equivalent to $(J^2(\CP^1 , \CP^1 ),{\mathcal C})$ where $J^2(\CP^1 , \CP^1 )$ is the space of 2-jets of maps from $\CP^1$ to $\CP^1$ and ${\mathcal C}$ is the standard contact system \cite{BCG3}.

The derived system $\sysH'$ admits the complex Cauchy characteristic distribution,
\begin{equation}
\rchi^{}_{\scriptstyle{{\mathcal H}'}}= {\rm span}_{\C} \{ pq  P_1 + q^2 P_2  -p^2  P_3 \}
\label{defchi}
\end{equation}
where 
$$
P_1 = Z_1 + p \partial_p - q\partial_q, \quad
P_2  =  Z_2+p\partial_q \quad
P_3  =  Z_3 + q \partial_p
$$
are the infinitesimal generators of $SL(2,\C )$ acting diagonally on the right on $SL(2,\C ) \times \CP^1$ corresponding to the $E_i$ basis in equation \eqref{sl2basis}. The quotient of the derived system $\sysH'$ by $\rchi^{}_{\scriptstyle{{\mathcal H}'}}$ makes $(SL(2,\C)\times \CP^1)/\rchi^{}_{\scriptstyle{{\mathcal H}'}}$ a complex 3-dimensional contact manifold with holomorphic contact structure.

The action of $SL(2,\C)$ on $SL(2,\C ) \times \CP^1$ projects to $(SL(2,\C)\times \CP^1)/\rchi^{}_{\scriptstyle{{\mathcal H}'}}$ and preserves the contact structure. Equation \eqref{defchi}  shows that $\rchi^{}_{\scriptstyle{{\mathcal H}'}}$ lies in the tangent space to the orbits of $SL(2,\C)$ on $SL(2,\C)\times \CP^1$ so that $SL(2,\C )$ orbits on the complex 3-dimensional contact manifold $(SL(2,\C)\times \CP^1)/\rchi^{}_{\scriptstyle{{\mathcal H}'}}$ have 2 complex dimensions. Locally there is only one such example (see \cite{PurplePete}) given by the prolongation of the $SL(2,\C )$ action 
\begin{equation}
(z,w)\cdot A =\left(z, \frac{ aw +c}{bw + d}\right), \quad ad-bc=1.
\label{SL2Jaction}
\end{equation}
to the 1-jets of holomorphic functions from $\CP^1$ to $\CP^1$ denoted by to $J^1(\CP^1 , \CP^1)$. The  prolongation of this $SL(2,\C)$ action to $J^2(\CP^1, \CP^1)$ (the 2-jets) is then given in standard local jet-coordinates $(z,w,w_z,w_{zz})$ by,
\begin{equation}
\rho\left(\, (z,w,w_z,w_{zz}), \, A\right)= \left(\, z ,\ \frac{aw+c}{bw+d}, \frac{1}{(bw+d)^2} w_z ,\  \frac{w_{zz}}{(bw+d)^2}- \frac{2bw_z^2}{(bw+d)^3}\,  \right)
\label{jetA}
\end{equation}
This action is not effective due to the center of $SL(2,\C)$ acting trivially. However $PSL(2,\C)$ acts freely and effectively on the subset of 2 jets of immersions which we denote by $J_0^2(\CP^1 , \CP^1)\subset J^2(\CP^1 , \CP^1)$.  These observations lead to the the following theorem.

\begin{thm}\label{EquivT} The Pfaffian system ${\mathcal H}$  is  $SL(2,\C)$ equivariantly equivalent to the jet space of holomorphic immersions, $(J_0^2(\CP^1 , \CP^1), {\mathcal C})$  where $SL(2,\C)$ acts on $J_0^2(\CP^1 , \CP^1) $ via prolongation of the action in equation \eqref{SL2Jaction} and given locally in equation \eqref{jetA}. 
In particular, the map  $\Psi: SL(2,\C) \times \CP^1 \to J_0^2(\CP ^1 , \CP^1)$ given by
\begin{equation}
\Psi(A,[p,q]) = \rho\left(\, ([p,q]A^{-1}, [p,q] A^{-1},1,0) , \, A\, \right)
\label{defPsi}
\end{equation}
is an $SL(2,\C)$ equivariant local diffeomorphism, and $\Phi^* {\mathcal C} = {\mathcal H}$.
\end{thm}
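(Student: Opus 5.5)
The plan is to exploit equivariance throughout, so that every local verification reduces to a computation along the slice $\{A=I\}$. I will check three things in order: first, that $\Psi$ is $SL(2,\C)$-equivariant; second, that it is a local diffeomorphism; third, that $\Psi^*{\mathcal C}={\mathcal H}$. (I read the $\Phi$ in the statement as $\Psi$.)

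\textbf{Equivariance and geometric meaning.} For equivariance I will use that $\rho$ in \eqref{jetA} is a right action, being the prolongation of the right action \eqref{SL2Jaction}. Then
$$\Psi(AC,[p,q]C)=\rho\big(([p,q]A^{-1},[p,q]A^{-1},1,0),AC\big)=\rho(\Psi(A,[p,q]),C),$$
since $[p,q]C(AC)^{-1}=[p,q]A^{-1}$. This is exactly equivariance with respect to the diagonal action $\Delta$ of \eqref{SL2action}.

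It also helps to record what $\Psi$ means. The point $([p,q]A^{-1},[p,q]A^{-1},1,0)$ is the $2$-jet of the identity map at $z_0=[p,q]A^{-1}$. Hence $\Psi(A,[p,q])$ is the $2$-jet at $z_0$ of the M\"obius map $z\mapsto z\cdot A$. In particular $z=[p,q]A^{-1}$, and the $w$-coordinate is $z_0\cdot A=[p,q]$, so $w=u$ in the chart $q\neq 0$. Since the $2$-jet of a M\"obius map is the $2$-jet of an immersion, the image lies in $J^2_0(\CP^1,\CP^1)$.

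\textbf{Local diffeomorphism.} Both spaces have complex dimension $4$. By equivariance it suffices to show that $d\Psi$ is injective at points $(I,[u,1])$.

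The orbit directions are the $P_i$ generators. Their image is the differential of the orbit map of $\rho$ at the jet $(u,u,1,0)$, which is injective because $PSL(2,\C)$, and hence $SL(2,\C)$ locally, acts freely on $J^2_0$.

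The remaining direction is $\partial_u$ along the slice $A=I$, where $\Psi(I,u)=(u,u,1,0)$. This moves the $z$-coordinate, and $z$ is invariant under $\rho$. So $\partial_u$ maps transversally to the orbit directions, and $d\Psi$ has full rank.

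\textbf{Pullback of the contact system.} The contact system is ${\mathcal C}=\{\rd w-w_z\rd z,\ \rd w_z-w_{zz}\rd z\}$. Write $w=F(z,A)=z\cdot A$, so that $\Psi=(z,F,F_z,F_{zz})$ evaluated at $z=u\cdot A^{-1}$. Then
$$\rd w-w_z\rd z=\rd_A F,\qquad \rd w_z-w_{zz}\rd z=\rd_A F_z,$$
which are the variations in $A$ at fixed $z$. Both are therefore combinations of the Maurer--Cartan forms $\alpha^i$ with coefficients polynomial in $z$; there is no $\rd u$ term.

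Both $\Psi^*{\mathcal C}$ and ${\mathcal H}$ are $SL(2,\C)$-invariant rank-$2$ Pfaffian systems, so equality only needs checking at $A=I$, where $z=u$. Differentiating $(az+c)/(bz+d)$ at the identity, I expect $\Psi^*(\rd w-w_z\rd z)$ to be $\pm(2u\alpha^1-u^2\alpha^2+\alpha^3)$, which is the generator $\theta$ of $\sysH'$ in \eqref{defHp}. Differentiating $F_z=1/(bz+d)^2$ at the identity should give a multiple of $\alpha^1-u\alpha^2=\eta^1$, modulo $\theta$. Together these span ${\mathcal H}\vert_U$ from \eqref{HU}, since $\eta^2=\theta-2u\eta^1$.

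\textbf{Expected obstacle.} The main difficulty is convention bookkeeping. The row-vector right action $[p,q]\mapsto[p,q]A$ has to match the affine formula in \eqref{SL2Jaction}, and the identification of $z\mapsto z\cdot A$ with the left-invariant $\alpha^i$ has to hold with the correct signs. If a sign or transpose goes wrong, the first contact form will land on a conjugate of $\theta$ rather than $\theta$ itself. I will therefore first verify the $\theta$ identification carefully, and then check that the second form lies in ${\mathcal H}$, not merely in $\sysH'$ plus something extra.
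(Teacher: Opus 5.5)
Your proposal is correct. The equivariance step is the same as the paper's; the other two steps take a different route.

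For $\Psi^*\mathcal C=\mathcal H$, the paper works in a global chart using the explicit formula \eqref{defLocPsi}. It obtains $\Psi^*(\rd w-w_z\rd z)=2u\alpha^1-u^2\alpha^2+\alpha^3$ and $\Psi^*(\rd w_z-w_{zz}\rd z)=2(a-ub)^2(\alpha^1-u\alpha^2)$ at every point where $q\neq0$. You instead use the identities $\rd w-w_z\rd z=\rd_AF$ and $\rd w_z-w_{zz}\rd z=\rd_AF_z$, together with the invariance of both systems, to reduce to the slice $A=I$. There your predicted forms come out exactly as $\theta$ (with the $+$ sign) and $2(\alpha^1-u\alpha^2)$.

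For the local diffeomorphism, the paper only asserts that $\Psi$ is two-to-one ``and therefore an immersion,'' which is not a valid inference on its own. Your argument genuinely supplies this step. Freeness of the $PSL(2,\C)$ action on $J^2_0$ makes $d\Psi$ injective on orbit directions, and the $\rho$-invariant coordinate $z$ shows that $d\Psi(\partial_u)$ is transverse to them.

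What each route buys:
\begin{itemize}
\item The paper's computation gives explicit pullbacks at every point, plus a coordinate form of $\Psi$ and its inverse $\Phi$ that is reused in Corollary \ref{PT1}. The cost is heavier algebra.
\item Your route is shorter and explains the answer: $\Psi(A,u)$ is the 2-jet of $z\mapsto z\cdot A$, and the contact forms are the $A$-variations at fixed $z$.
\item Your route does rely on several facts you should state explicitly: invariance of $\mathcal C$ under prolongation, invariance of $\mathcal H$ under $\Delta$, the fact that every $\Delta$-orbit meets $\{I\}\times\CP^1$, and a density or second-chart argument to cover $u=\infty$.
\end{itemize}
Your remark that the coefficients are ``polynomial in $z$'' holds only on the slice. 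In fact, since $w=u$ does not depend on $A$, left-invariance gives $\rd_AF=\theta$ everywhere directly.

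One caution if you cross-check against the paper. From the definition you get $z=[u,1]A^{-1}=(ud-c)/(a-ub)$, which is the opposite sign of the $z$-entry printed in \eqref{defLocPsi}. The inverse $\Phi$ in \eqref{bigPhi} agrees with your sign. With the printed sign, $\Psi^*(\rd w-w_z\rd z)$ would pick up a $2\,\rd u$ term, so trust your own derivation.
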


The map $\Psi: SL(2,\C) \times \CP^1 \to J_0^2(\CP ^1 , \CP^1)$  in equation \eqref{defPsi} is given in the local jet coordinates $(z,w,w_z,w_{zz})$  by,
\begin{equation}
\Psi(A, u ) = ( z=  \frac{c-u d}{a-ub}, w= u  ,  w_z=(a-ub)^2 ,w_{zz}= -2 b (a-ub)^3)\quad ad-bc=1,
\label{defLocPsi}
\end{equation}
where  $u=pq^{-1}$ is a local coordinate on the domain $\CP^1$ of $\Psi$ and  $(z,w)$ are local coordinates on the image $\CP^1 \times \CP^1$ of $\Psi$.


\begin{proof} The map $\Psi$ in equation \eqref{defPsi}  is checked to be equivariant by,
$$
\begin{aligned}
\Psi( AA', [p,q]A') &=&&\!\! \!\! \rho\left(\, ([p,q]A'(AA')^{-1}, [p,q]A'(AA')^{-1}, 1,0) , \,  AA' \right) \\
&=&&\! \!\!\! \rho\left(\, \rho\left(\, ([p,q]A, [p,q]A^{-1}, 1,0) ,\, A\, \right),\,  A'\, \right)   \\
&=&&\!\! \!\! \rho \left( \, \Psi(A, [p,q]),\, A' \right) 
\end{aligned}
\quad A, A' \in SL(2,\C).
$$
While it is easy to check that $\Psi$ is two-to-one (due to the center of $SL(2,\C)$), and is therefore an immersion and hence a local diffeomorphism. To finish the proof we now check $\Phi^* {\mathcal C} = {\mathcal H} $.

Using the local coordinates in equation \eqref{defLocPsi}, and   \eqref{defHp} we have
$$
\Psi^* (dw-w_z dz) =  2u \alpha^1 -u^2 \alpha^2 +\alpha^3 \in H'.
$$
While from equations \eqref{defLocPsi} and \eqref{HU} we have,
$$
\Psi^*( dw_z -w_{zz} dz) = 2(a-ub)^2 (\alpha^1-u \alpha^2) \in H .
$$
These coordinates are defined on an open dense subset for the domain of $\Psi$, and therefore $\Psi^* {\mathcal C} = {\mathcal H}$. 
\end{proof}


The de-prolongation of part 2 in Example 4 of \cite{FelsIvey}  (or see \cite{FelsIveyArchive}) shows  that the Monge Amp\'ere form of the Liouville equation given in equation \eqref{LiouEq} is the local quotient of the real version of the contact system ${\mathcal C}$ on $J^2_0(\CP^1, \CP^1 )$ by $SU(2)$. That is,
\begin{equation}\label{defMliou}
{\mathcal M}_{Liou} =\sysE_2/\rhoSU=\sysE_2 /SU(2)
\end{equation}
where $\rhoSU$ the action of $SU(2)$ on $J^2_0(\CP^1, \CP^1 )$ given by restricting \eqref{SL2Jaction} to the subgroup $SU(2)$,  $\sysE_2$ is the real version of ${\mathcal C}$, and ${\mathcal M}_{Liou}$ is the Monge Amp\'ere system for the Liouville equation given by
\begin{equation}
{\mathcal M}_{Liou} =\{ dU-U_z dz - U_{\bar z} d{\bar z},\ dU_z\wedge dz + d U_{\bar z}  \wedge d{\bar z}  , \ {\bf i}(
dU_z\wedge d z - (d{ U_{\bar z}}     -  {\rm e}^U d{z}) \wedge d{\bar z}) \}_\alg
\label{defMLiou}
\end{equation}
in coordinates  $(z,U,U_z)$ where $z,U_z$ are complex, and $U$ is real. Integral manifolds of ${\mathcal M}_{Liou} $ in equation \eqref{defMLiou} in the form $U=U(z,\bar z)$ safisfy the Liouville equation.
$$
2U_{z \bar z} +   {\rm e}^U =0 \ .
$$
See Remark 4.12 in Section 4.2 of \cite{AFCR} for the commutativity of de-prolongation and quotients.

The second part of example 4 from \cite{FelsIvey} demonstrates equation \eqref{defMliou} explicitly by working on the open set of $J^2_0(\CP^1 , \CP^1)$ with homogeneous coordinates $[p,q],[r,s]$ on the domain and target $\CP^1$ respectively  and inhomogeneous local coordinates $z=pq^{-1}$, $w=rs^{-1}$. The quotient map ${\bq}_{\rhoSU}:J^2_0(\CP^1, \CP^1 ) \to J^2_0(\CP^1, \CP^1 )/\rhoSU$ is then expressed in local coordinates by
\begin{equation}
{\bq}_{\rhoSU}=\left(z=z, \ U= 2\log \frac{2 |w_z|}{1+|w|^2}, \ U_z=\frac{w_{zz}}{w_z}-2\frac{\overline{w} w_z}{1+|w|^2} \right).
\label{Lsln}
\end{equation}
We can verify $ {\mathcal M}_{Liou} = \sysE_2/{\bq}_{\rhoSU}$ in equation \eqref{defMliou} by checking ${\bq}_{\rhoSU}^* {\mathcal M}_{Liou} \subset {\mathcal C }$ in local coordinates using equations \eqref{Lsln} and \eqref{defMLiou}.

Finally combining example 4 from \cite{FelsIvey} as summarized above,   Theorem \ref{TMAreduction}, and Theorem \ref{EquivT} we have the commutative diagram as described in equation \eqref{CD0},
\begin{equation} 
\begin{gathered}
\tikzstyle{line} = [draw, -latex', thick]
\begin{tikzpicture}
\node(SLCP) {$\left( SL(2,\C) \times \CP^1 , {\sysE}_1 \right)$};
\node[right of=SLCP, node distance=60mm](B) {$\left( J^2_0(\CP^1 , \CP^1)  , \sysE_2 \right)$};
\node[below of=SLCP, node distance=20mm](I1) {$\left( \HT  \times \CP^1 , {\mathcal M}_{CMC}\right)$};
\node[right of=I1, node distance=60mm](I2) {$\left( J^1(\reals^2, \reals ) , {\mathcal M}_{Liou}\right)$};
\path[line](SLCP) -- node[above] {$\Psi $} (B);
\path[line](B) -- node[left] {$\bq_{\rhoSU}$} (I2);
\path[line](I1) -- node[below] {$\psi$} (I2);
\path[line](SLCP) -- node[left] {$\Pi_{\Delta_{SU(2)}}$} (I1);
\end{tikzpicture}
\end{gathered}
\label{CD1T1}
\end{equation}
where $\Psi$ and $\psi$ are equivalences and $\Psi$ is $SL(2, \C)$ equivariant. The systems satisfy
${\mathcal M}_{CMC}= \sysE_1/\bq_{\Delta_{SU(2)}}$ where
$\sysE_1$ is the real version of ${\mathcal H}$ 
and  ${\mathcal M}_{Liou}= \sysE_2/\bq_{\rhoSU}$ 
where $\sysE_2$ is the real version of the contact system ${\mathcal C}$.

Finally, in order to prove Theorem \ref{LCMC} which maps solutions of Liouville's equation to CMC-1 hypersurfaces, we
let    $\Phi:J_0^2(\CP ^1 , \CP^1) \to SL(2,\C) \times \CP ^1 $  be the (right) inverse of $\Psi$ which by equation \eqref{defLocPsi} is given in coordinates by,
\begin{equation}
\Phi( z, w, w_z, w_{zz}) = \left( \frac{1}{2 w_z^{3/2}}\left[ \begin{matrix} 
 2 w_z^2-w w_{zz} & -w_{zz},\\
  z w w_{zz} +2 w w_z  -2 z w_z^2 &   (z w_{zz} + 2w_z) \end{matrix} \right] , \ 
u = w   \right).
\label{bigPhi}
\end{equation}
We use equation \eqref{bigPhi} to show the following.

\begin{corollary}\label{PT1} Let $U(z,\bar z)$ be a solution to the Liouville equation, then $ \tilde \phi:\C \to \HT $ 
\begin{equation}
\begin{aligned}
\tilde \phi\left(z,U \, \right) &=&&\sfj^{-1}\left( \frac{1}{2} \left[  \begin{matrix} {\mathrm e}^{\frac{U}{2}}+|U_z|^2 {\mathrm e}^{-\frac{U}{2}} &
-\bar z{\mathrm e}^{\frac{U}{2}} -   ( \bar z U_{\bar z}  +2)U_z {\mathrm e}^{-\frac{U}{2}}
\\
- z {\mathrm e}^{\frac{U}{2}} -  (z{U_z}  +2)U_{\bar z }{\mathrm e}^{-\frac{U}{2}} 
&   |z|^2 {\mathrm e}^{\frac{U}{2}}+  |zU_z  +2|^2  {\mathrm e}^{-\frac{U}{2}} \end{matrix} \right]  \ \right)
\end{aligned}
\label{LtoCMC}
\end{equation}
where $\sfj$ is defined in equation \eqref{SL2H3} is a CMC-1 surface in $\HT $.
\end{corollary}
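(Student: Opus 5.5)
The plan is to read the formula \eqref{LtoCMC} off the commutative diagram \eqref{CD1T1}. A solution of the Liouville equation is pushed up to a holomorphic integral curve of $\sysE_2$, carried across by $\Phi$, and pushed down by $\Pi_{\Delta_{SU(2)}}$. The only genuine work is identifying the resulting map in terms of $U$ and $U_z$.

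First, I lift $U$. Locally, any solution $U$ of the Liouville equation is of the form \eqref{Liousol}, that is $U = 2\log \frac{2|w_z|}{1+|w|^2}$ for a holomorphic $w(z)$ with $w_z\neq 0$. Hence its 1-jet graph equals $\bq_{\rhoSU}\circ j^2w$, by \eqref{Lsln}. The curve $j^2 w$ is an integral manifold of ${\mathcal C}$, and so of $\sysE_2$, lying in $J^2_0(\CP^1,\CP^1)$. By Theorem \ref{EquivT}, with $\Phi$ a local inverse of $\Psi$, the curve $\Phi\circ j^2 w$ is an integral curve of ${\mathcal H}$, hence of $\sysE_1$. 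By Theorem \ref{TMAreduction}, its image under $\Pi_{\Delta_{SU(2)}}$ is an integral surface of $\sysMCMC$, and its $\HT$-component is a CMC-1 surface. The independence condition comes from $w_z\neq0$ and the fact that $\Phi$ is a local diffeomorphism; I take it as part of the equivalence.

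Second, I compute the $\HT$-component. Since $\Pi_{\Delta_{SU(2)}}=\Pi_{S^1}\circ\zeta$ with $\zeta(A,u)=AR(u)$ and $R(u)\in SU(2)$, the $\HT$-component equals $\Pi_{SU(2)}(AR(u))=\sfj^{-1}(AR R^\dagger A^\dagger)=\sfj^{-1}(AA^\dagger)$. Here $A$ is the matrix in \eqref{bigPhi} evaluated on $j^2w$. The sign ambiguity of $w_z^{3/2}$ is harmless, because $-A$ gives the same $AA^\dagger$.

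Third, I express $AA^\dagger$ in terms of $U$ and $U_z$. A priori, $AA^\dagger$ is a function of $(z,w,w_z,w_{zz})$. Because $\Phi$ is right $SL(2,\C)$-equivariant (the equivariance of $\Psi$ in Theorem \ref{EquivT}), replacing $w$ by $w\cdot C$ with $C\in SU(2)$ replaces $A$ by $AC$. This leaves $AA^\dagger$ unchanged, so $AA^\dagger$ descends to the quotient coordinates $(z,U,U_z)$ of \eqref{Lsln}. To find the descended function, I evaluate at a point after using $SU(2)$ to normalize $w=0$ there. In that normalization
$$e^{U/2}=2|w_z|,\qquad U_z=\frac{w_{zz}}{w_z},\qquad A=\frac{1}{2w_z^{3/2}}\begin{bmatrix} 2w_z^2 & -w_{zz}\\ -2zw_z^2 & zw_{zz}+2w_z\end{bmatrix}.$$
Multiplying out $AA^\dagger$ and substituting $|w_z|=\tfrac12 e^{U/2}$ and $w_{zz}=U_z w_z$ should give each entry of the matrix in \eqref{LtoCMC}. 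For example, the $(1,1)$ entry is
$$\frac{4|w_z|^4+|w_{zz}|^2}{4|w_z|^3}=|w_z|+\frac{|U_z|^2}{4|w_z|}=\tfrac12\bigl(e^{U/2}+|U_z|^2e^{-U/2}\bigr),$$
and the remaining entries follow the same way.

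The main obstacle is this last step. It requires justifying that the normalization at a single point is legitimate: $SU(2)$ acts transitively on the target $\CP^1$, and the descended function is well defined by the invariance argument. It also requires keeping track of conjugations in the off-diagonal entries so that they match the stated $\bar z U_{\bar z}$ and $zU_z$ terms exactly. If the normalization step causes trouble, I would fall back on a direct computation of $AA^\dagger$ for general $w$, substituting $\bar w$ from the $U_z$ relation.
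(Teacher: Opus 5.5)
Your derivation of the formula is correct and is essentially the paper's argument. Normalizing $w=0$ with $SU(2)$ is the paper's cross-section $\Sigma$; the paper also uses the residual $S^1$ to make $w_z=\tfrac12{\mathrm e}^{U/2}$ real, which produces \eqref{IMCMC}. Your entries of $AA^\dagger$ agree with \eqref{LtoCMC}. Lifting $U$ to $j^2w$ via \eqref{Liousol}, instead of applying $\psi^{-1}$ to the $1$-jet of $U$, is a cosmetic difference, since $\Pi_{\Delta_{SU(2)}}\circ\Phi\circ j^2w=\psi^{-1}\circ j^1U$.

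The step that is wrong is your claim that the independence condition follows from $w_z\neq 0$ and $\Phi$ being a local diffeomorphism. The equivalence $\psi$ does not match the independence conditions of the two systems. Along $A(z)=\pi_1\circ\Phi\circ j^2w$, the generators of $\mathcal H$ force $A^{-1}A_z=fM$ with $M=\left(\begin{smallmatrix} w&1\\ -w^2&-w\end{smallmatrix}\right)$. Differentiating the entries $b,d$ of \eqref{bigPhi} gives $f=-\frac{1}{2w_z}S(w)$, where $S(w)=\frac{w_{zzz}}{w_z}-\frac32\bigl(\frac{w_{zz}}{w_z}\bigr)^2$ is the Schwarzian. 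Since $M$ is nonzero and nilpotent, $d(AA^\dagger)=A\bigl(fM\,dz+(fM)^\dagger d\bar z\bigr)A^\dagger$ has real rank $2$ exactly where $f\neq 0$. That is, the map is immersive exactly where $S(w)=U_{zz}-\tfrac12U_z^2\neq 0$.

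For M\"obius $w$ this fails identically. For example, $w=z$ gives $\Phi\circ j^2w\equiv(I,z)$. So the Liouville solution $U=2\log\frac{2}{1+|z|^2}$ is sent by \eqref{LtoCMC} to the single point $(1,0,0,0)$.

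The paper's proof also passes over this point; it only invokes the equivalence $\psi$. What the argument actually shows is that \eqref{LtoCMC} is the $\HT$-projection of an integral surface of $\sysMCMC$. It is a CMC-1 immersion away from the zeros of the holomorphic function $U_{zz}-\tfrac12U_z^2$. Those zeros are isolated branch points unless the function vanishes identically, in which case the image is a point. You should impose this as a nondegeneracy hypothesis rather than derive it from $w_z\neq 0$.
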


\begin{proof} 
Let $\phi= \psi^{-1}: J^1(\R^2,\R)\to \HT  \times \CP^1$ from equation \eqref{CD1T1}. By the equivalence of the systems $\sysMCMC$ and ${\mathcal M}_{Liou}$ the projection in $\HT$ of $\phi$ given by $\phi_1=\pi_1 \circ \phi$ evaluated on a solution to the Liouville equation is a CMC-1 surface. The corollary will then follow by writing $\phi_1$ in coordinates and evaluating on a solution to the Liouville equation.

We choose the local  cross-section $\Sigma: J^2_0(\CP^1,\CP^1)/\rhoSU \to J^2_0(\CP^1,\CP^1)$ to the $SU(2)$ action in \eqref{jetA} (restricted to $SU(2)$)
\begin{equation*}
\Sigma:=\left( z=z,\ w=0, w_z=\frac{1}{2} {\rm e}^{\frac{U}{2}},\ w_{zz}=\frac{U_z}{2}{\rm e}^{\frac{U}{2}} \ \right)
\end{equation*}
 so that $\Pi_{\rhoSU} \circ \Sigma ={\rm Id}$ with $\Pi_{\rhoSU}$ in equation \eqref{Lsln}.
We then have from the diagram in equation \eqref{CD1},  $\phi = \psi^{-1} = \Pi_{\Delta_{SU(2)}} \circ \Psi^{-1} \circ \Sigma= \Pi_{\Delta_{SU(2)}} \circ \Phi \circ \Sigma$ where $\Pi_{\Delta_{SU(2)}}$ is given in equation \eqref{PiAp}. 

By identifying $J^2_0(\CP^1,\CP^1)/\rhoSU=J^1(\reals^2,\reals)$ as done in equation \eqref{Lsln} with local coordinates $(z,U,U_z)$, we let $\hat \phi =  \pi_1\circ \Phi \circ \Sigma :J^1(\reals^2, \reals) \to SL(2,\C) $ which with $\Phi$ in equation \eqref{bigPhi} gives

\begin{equation}
\hat \phi_1(z,\bar z, U,U_z, U_{\bar z})  = \frac{\sqrt{2}}{2}
\left[\begin{array}{cc}
{\mathrm e}^{\frac{U}{4}} & -\mathit{U_z} \,{\mathrm e}^{-\frac{U}{4}}
\\
 -z {\mathrm e}^{\frac{U}{4}}  &\left(z\mathit{U_z}  +2\right)  {\mathrm e}^{-\frac{U}{4}} 
\end{array}\right]
\label{IMCMC}
\end{equation}

Then the map  $\tilde \phi = \pi_1\circ \Pi_{\Delta_{SU(2)}} \circ \Phi \circ \Sigma =\Pi_{SU(2)}\circ \hat \phi$ 
where $\Pi_{SU(2)} $  from equation \eqref{defpisu2} gives $\tilde \phi: J^1(\R^2,\R)\to  \HT$  as
\begin{equation}
\tilde \phi = \sfj^{-1}\left(\hat \phi_1 \cdot (\hat \phi_1)^\dagger\right).
\label{tilphi}
\end{equation}
Substituting equation \eqref{IMCMC} into equation \eqref{tilphi} gives the right hand side of equation \eqref{LtoCMC}. This implies that if $U(z,\bar z)$ is a solution to the Liouville equation \eqref{LiouEq} (and hence an integral manifold of ${\mathcal M}_{Liou}$) then \eqref{LtoCMC} is a CMC-1 surface.  
 \end{proof}



Using equations \eqref{LtoCMC}, \eqref{SL2H3}  and \eqref{defj} to solve for $x^i$ yields,
\begin{equation}
\begin{aligned}
x^0 & = &&\frac{1}{4} \left(  (1+|z|^2 ) {\rm e}^{\frac{U}{2}}  +(|U_z|^2+|zU_z+2|^2    {\rm e}^{-\frac{U}{2}} \right) \\
x^1&=&& -\frac{1}{4} \left( (z+ \bar z  ) {\rm e}^{\frac{U}{2}}+\left(U_z U_{\bar z}(z+\bar z) +2 ( U_z+U_{\bar z})  \right){\rm e}^{-\frac{U}{2}} \right)
\\
x^2&=&& 
-\frac{{\bf i}}{4}\left( (z-\bar z  ) {\rm e}^{\frac{U}{2}}+
\left(  U_z U_{\bar z}(z-\bar z) + 2 ( U_{\bar z}-U_z)  \right){\rm e}^{-\frac{U}{2}} \right)
\\
x^3 &=&&\frac{1}{4} \left(  (1-|z|^2 ) {\rm e}^{\frac{U}{2}}  +(|U_z|^2-|zU_z+2|^2    {\rm e}^{-\frac{U}{2}} \right) .
\end{aligned}
\label{RIM}
\end{equation}
Expressing these equations in terms of $U=U(x,y)$ leads to Theorem \ref{LCMC} in the Introduction. It is also worth noting that substitution from  equation \eqref{Liousol} for $U$ in terms of $f(z)$ into equation \eqref{RIM} also provides a Weierstass formula for CMC-1 surface in  $\HT$ in terms of the holomorphic function $f(z)$, see also Theorem 1.1 in \cite{LeviCMC}.

%% file: appendix.tex
\appendix
\section{Quotient Space  Lemmas for Section 4}

\newcommand{\cdotn}{\!\cdot\!} 
The quotient representation for ${\sysICMC}$ utilizes the following technical results.

\begin{lem} \label{PreLpsimap}  Let $(A,[p,q]) \in SL(2,\C ) \times \CP^1$ and let $R\in SU(2)$ be a matrix such that $[p,q] R =[0,1]$. 
Let $\Delta_{SU(2)}$ denote the diagonal right action from \eqref{SL2action}
restricted to $SU(2)$
Then map $\gamma :(SL(2,\C ) \times \CP^1)/{\Delta_{SU(2)}} \to \HT\times \CP^1$ defined pointwise by
\begin{equation}
\gamma\left( (A ,[p,q])\cdotn SU(2) \, \right)  = \Pi_{S^1}( AR), 
\label{defgam}
\end{equation}
where $\Pi_{S^1}$ is given by equation \eqref{defpis1}, is a well-defined diffeomorphism.
\end{lem}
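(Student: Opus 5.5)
We need to show that $\gamma$ is well defined, bijective, and smooth with smooth inverse. The plan is to reduce everything to the homogeneous-space description of $\HT\times\CP^1$ from the lemma establishing \eqref{defpis1}.

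\textbf{Well-definedness.} There are two choices to control: the matrix $R$ and the orbit representative.

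First, $R$ is determined up to right multiplication by the stabilizer of $[0,1]$ in $SU(2)$. That stabilizer is exactly $S^1=SU(2)\cap\Borel$, the diagonal unitary matrices. So if $R'$ also satisfies $[p,q]R'=[0,1]$, then $R'=RS$ with $S\in S^1$. Since $\Pi_{S^1}$ is $S^1$-invariant, $\Pi_{S^1}(AR')=\Pi_{S^1}(AR)$.

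Second, replace $(A,[p,q])$ by $(AC,[p,q]C)$ with $C\in SU(2)$. Then $C^{-1}R\in SU(2)$ satisfies $[p,q]C\,C^{-1}R=[0,1]$, so it is an admissible choice. Moreover $(AC)(C^{-1}R)=AR$, so the value of $\gamma$ is unchanged.

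\textbf{Bijectivity.} Write $\Pi_{S^1}(AR)=(\Pi_{SU(2)}(A),\Pi_\Borel(AR))$, using $\Pi_{SU(2)}(AR)=\Pi_{SU(2)}(A)$.

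For surjectivity, take a target point $(x,[r,s])$. Choose $A$ with $\Pi_{SU(2)}(A)=x$, and choose $R\in SU(2)$ with $[1,0]R^{-1}$ adjusted so that the first column of $AR$ gives $[r,s]$. This is possible because $SU(2)$ acts transitively on $\CP^1$, as in the transitivity argument of the earlier lemma. Then set $[p,q]=[0,1]R^{-1}$.

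For injectivity, suppose $\Pi_{S^1}(AR)=\Pi_{S^1}(A'R')$. The earlier lemma gives $A'R'=ARS$ for some $S\in S^1$. Put $C=RSR'^{-1}\in SU(2)$. Then $A'=AC$, and
\[
[p',q']=[0,1]R'^{-1}=[0,1]S R'^{-1}=[0,1]R^{-1}C=[p,q]C,
\]
where we used that $S$ fixes $[0,1]$. Hence the two points lie in the same $SU(2)$-orbit.

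\textbf{Smoothness.} The $SU(2)$ action is free, since it is already free on the first factor, and $SU(2)$ is compact. So the quotient is a smooth manifold of real dimension $6+2-3=5$, matching $\dim\HT\times\CP^1$.

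On the chart $q\neq0$, with $u=pq^{-1}$, there is the explicit smooth choice $R(u)=(1+|u|^2)^{-1/2}\begin{bmatrix}1&\bar u\\-u&1\end{bmatrix}$, which gives a smooth lift $\zeta(A,u)=AR(u)$. A similar formula works on $p\neq0$. Hence $\gamma\circ\pi$ is smooth, and so $\gamma$ is smooth.

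The main obstacle is showing that $\gamma$ is a diffeomorphism rather than just a smooth bijection. I would handle it through equivariance: $\gamma$ intertwines the left $SL(2,\C)$ action on $SL(2,\C)\times\CP^1$, which commutes with the right $SU(2)$ action, with the diagonal action on $\HT\times\CP^1$. Both spaces are therefore $SL(2,\C)$-homogeneous. The isotropy of the orbit of $(I,[0,1])$ is $\{g: g\in SU(2),\ [0,1]g^{-1}\cdot\ \text{fixed}\}$, which is conjugate to $S^1$. An equivariant bijection between homogeneous spaces with matching isotropy is a diffeomorphism. Alternatively, one can write down an explicit smooth inverse, $(x,[r,s])\mapsto$ the orbit of $(A_xR,[0,1]R^{-1})$, using local smooth sections.
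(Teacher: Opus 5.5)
Your proof is correct, and its core matches the paper's. Well-definedness uses the same two checks: independence of $R$ because $R^{-1}R'\in SU(2)\cap\Borel=S^1$, and invariance because $C^{-1}R$ is admissible for $(AC,[p,q]C)$. Injectivity is essentially the same argument; your $C=RSR'^{-1}$ is a more direct form of the paper's $P=RT_\theta R^{-1}U$.

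The differences lie in the remaining steps.
\begin{itemize}
\item \textbf{Surjectivity.} The paper takes $[p,q]=[0,1]$, $R=I$ and invokes surjectivity of $\Pi_{S^1}$. That is shorter than your construction, whose phrase ``$[1,0]R^{-1}$ adjusted'' should just say: take $R\in SU(2)$ with first column proportional to $A^{-1}(r,s)^t$.
\item \textbf{Smoothness of $\gamma$.} Instead of two charts, the paper uses the single global formula $R_{p,q}=(|p|^2+|q|^2)^{-1/2}\begin{pmatrix} q&\bar p\\ -p&\bar q\end{pmatrix}$ on $\C^2\setminus\{0\}$. Rescaling $(p,q)$ changes it only by right multiplication by an element of $S^1$.
\item \textbf{Diffeomorphism.} This is the main difference. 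The paper stops once $\gamma$ is shown to be a smooth bijection. It never gives the orbit space a manifold structure and never shows that $\gamma^{-1}$ is smooth. You supply both, using the free action of the compact group $SU(2)$ and the left $SL(2,\C)$-equivariance of $\gamma$. That addition is sound, and it makes your argument more complete than the paper's.
\end{itemize}

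The cleanest way to phrase your final step is this: equivariance together with transitivity forces $\gamma$ to have constant rank, and a smooth bijection of constant rank is a diffeomorphism by the global rank theorem.

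There are two small slips in that part.
\begin{itemize}
\item The isotropy of $(I,[0,1])\cdot SU(2)$ is exactly $S^1$, not merely conjugate to it. If $(g,[0,1])$ lies in that orbit, then $g\in SU(2)$ and $[0,1]g=[0,1]$.
\item Your explicit inverse should send $(x,[r,s])$ to the orbit of $(A_x,[0,1]R^{-1})$, equivalently of $(A_xR,[0,1])$. It should not send it to the orbit of $(A_xR,[0,1]R^{-1})$.
\end{itemize}
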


\begin{proof} In order to prove this map is well-defined we first show that the function $\alpha:SL(2,\C)\times\CP^1 \to SL(2,\C)/S^1$ defined by
\begin{equation}
\alpha\left( (A ,[p,q]) \right) = \Pi_{S^1}(AR).
\label{defalpha}
\end{equation}
is independent of the choice of $R$. Suppose $R,R'\in SU(2)$ with 
$[p,q]R=[p,q]R' =[0,1]$.   Let $S = R^{-1}R' \in SU(2)$. 
Then $[0,1]=[p,q]R'=[p,q] R S=[0,1]S$ implies that $S\in SU(2)\cap B = S^1$. Therefore $\pi_{S^1}(AR')=\pi_{S^1}(ARS)=\pi_{S^1}(AR)$ since $S\in S^1$. 

We next show that $\alpha$ is invariant under the diagonal $SU(2)$ action
by an element $U\in SU(2)$.  First, note that if $[p',q'] = [p,q] U$ and 
$[p,q] R = [0,1]$ then $[p',q'] U^{-1} R = [0,1]$.
Hence 
$$
\alpha( ( A,[p,q]) \cdotn U ) = \Pi_{S^1}( A U(U^{-1} R)) = \Pi_{S^1}(AR) =\alpha( ( A,[p,q]))
$$
and $\alpha$ is invariant under $\Delta_{SU(2)}$. 
Thus there exists a unique 
$\gamma: (SL(2,\C) \times \CP^1)/{\Delta_{SU(2)}} \to SL(2,\C) /S^1$ satisfying $\gamma\circ\bq_{\Delta_{SU(2)}}= \alpha$, which must be given by \eqref{defgam}.  

Since $\alpha( (A, [0,1])) = \Pi_{S^1}(A)$ for any $A \in SL(2,C)$, $\alpha$ is surjective and hence $\gamma$ is surjective.
It remains to check that $\gamma$ is injective. 
Let $(A,[p,q]), (A',[p',q']) \in SL(2,\C) \times \CP^1$.  Supposing that
\begin{equation}
\gamma(\,(A,[p,q])\cdotn SU(2)\,) = \gamma(\,(A',[p',q'])\cdotn SU(2)\,),
\label{showin}
\end{equation}
we will show that there exists $P\in SU(2)$ such that $(A',[p',q'])= (A,[p,q])\cdotn P$. 

Since $SU(2)$ acts transitively on $\CP^1$, there exists $U\in SU(2)$ such that $[p',q']=[p,q] U$. As before, if $[p,q]R=[0,1]$ then 
\begin{equation}
[p',q'] U^{-1} R = [p,q] R= [0,1] .
\label{derRp}
\end{equation}
Equations \eqref{defalpha} and \eqref{showin} then imply that
$\Pi_{S^1}(A' U^{-1} R) = \Pi_{S^1}(A R)$. 
Thus there exists $T_\theta\in S^1$ such that $A' U^{-1} R = A R T_\theta$
and so
\begin{equation}\label{Aprimeformula}
A' = A R T_\theta R^{-1} U
\end{equation}
Letting $P=R T_\theta R^{-1} U\in SU(2)$, 
from \eqref{derRp} it follows that
\begin{equation}\label{Ponpq} 
[p,q] P = [p,q] R T_\theta R^{-1} U 
= [0,1] T_\theta R^{-1} U = [0,1] R^{-1} U = [p',q'].
\end{equation}
From \eqref{Aprimeformula} and \eqref{Ponpq} we obtain $(A',[p',q'])= (A,[p,q])\cdotn P$, 
so the map $\gamma$ is injective.

To see that $\gamma$ is smooth, note that for $[p,q] \in \CP^1$ the matrix
\begin{equation}\label{defRpq}
R_{p,q} = \dfrac{1}{\sqrt{ |p|^2 + |q|^2}}\begin{pmatrix} q &\overline{p} \\ -p &\overline{q} \end{pmatrix}\in SU(2)
\end{equation}
satisfies $[p,q] R_{p,q} = [0,1]$. Then the map $\widehat \alpha: SL(2,\C) \times (\C^2\backslash{(0,0)} ) \to \HT \times \CP^1$ defined by $\widehat\alpha( A, (p,q)) = \Pi_{S^1}\left( A R_{p,q}\right)$ is clearly smooth, but also invariant under the $\Cm$-action
of scaling $(p,q)$.  Since it induces the map $\alpha$ in \eqref{defalpha}, the latter map is smooth, and hence $\gamma$ is smooth.
\end{proof}

Because $\gamma$ in Lemma \ref{PreLpsimap} is a diffeomophism, it follows that 
$\Pi_{\Delta_{SU(2)}}: SL(2,\C)\times \CP^1\to \HT \times \CP^1$ defined by
\begin{equation}
\Pi_{\Delta_{SU(2)}}( A,[p,q]) =\gamma ( (A,[p,q]) \cdot SU(2)) .
\label{defPi}
\end{equation}
is a representation of the quotient map $\bq_{\Delta_{SU(2)}}$,
where again $\Delta_{SU(2)}$ denotes the right diagonal action from \eqref{SL2action} restricted to $SU(2)$.

\begin{lem}\label{Lpsimap} 
Let $\iota([p,q])=[q,-p]$. 
Then
\begin{equation}
\Pi_{\Delta_{SU(2)}}( A,[p,q]) = \left(\ \Pi_{SU(2)}(A), \iota( [p,q]A^{-1})\ \right).
\label{PiAp}
\end{equation}
\end{lem}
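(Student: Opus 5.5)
We need $\Pi_{\Delta_{SU(2)}}(A,[p,q]) = \gamma((A,[p,q])\cdot SU(2)) = \Pi_{S^1}(AR)$ with $R\in SU(2)$ satisfying $[p,q]R=[0,1]$, and $\Pi_{S^1}(AR) = (\Pi_{SU(2)}(AR), \Pi_\Borel(AR))$ by \eqref{defpis1}. The plan is to compute the two components separately; both are direct.

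\emph{First component.} Since $R\in SU(2)$, $RR^\dagger=I$, so by \eqref{defpisu2}
\[
\Pi_{SU(2)}(AR)=\sfj^{-1}(ARR^\dagger A^\dagger)=\sfj^{-1}(AA^\dagger)=\Pi_{SU(2)}(A).
\]

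\emph{Second component.} By \eqref{defPiB}, $\Pi_\Borel(AR)$ is the first column $[x,y]$ of $AR$, viewed as a point of $\CP^1$. The key observation is that for a $2\times 2$ matrix $M$ with $\det M=1$, the row vector $[y,-x]$ annihilates the first column, and a row vector $[r,s]$ satisfies $[r,s]M\propto[0,1]$ exactly when $[r,s]$ kills the first column of $M$. Apply this to $M=AR$. The condition $[p,q]R=[0,1]$ gives
\[
([p,q]A^{-1})(AR)=[0,1] \quad\text{(projectively)},
\]
so $[p,q]A^{-1}$ annihilates the first column $(x,y)^T$ of $AR$. Hence $[p,q]A^{-1}=[y,-x]$ projectively.

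Now apply $\iota$: $\iota([y,-x])=[-x,-y]=[x,y]=\Pi_\Borel(AR)$. Combining the two components gives \eqref{PiAp}.

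\emph{Checks.} One could instead verify the formula with the explicit choice $R_{p,q}$ from \eqref{defRpq}, whose first column is $(q,-p)^T/\sqrt{|p|^2+|q|^2}$. This matches the result with $A=I$: $\iota([p,q])=[q,-p]$. The only point needing care is the projective bookkeeping of scalars and signs, which is harmless in $\CP^1$. Independence of the choice of $R$ is already guaranteed by Lemma \ref{PreLpsimap}.
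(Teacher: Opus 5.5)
Your argument is correct and has the same structure as the paper's proof: you split $\Pi_{S^1}(AR)$ into its $\Pi_{SU(2)}$ component, which is unchanged because $RR^\dagger=I$, and its $\Pi_\Borel$ component. The only difference is in that second component: the paper fixes $R=R_{p,q}$ and multiplies out to get $[aq-bp,\,cq-dp]$ on both sides, while your annihilator argument (that $[p,q]A^{-1}$ kills the first column of $AR$) gives the same identity for any admissible $R$ without coordinates.
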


\begin{proof} Let $(A,[p,q]) \in SL(2,\C)\times \CP^1$ and let $R_{p,q}$
be as in \eqref{defRpq}.  Then by equations \eqref{defpis1}, \eqref{defgam} and \eqref{defPi} we have
$$
\begin{aligned}
\Pi_{\Delta_{SU(2)}}(A,[p,q]) &=&& \gamma( (A,[p,q])\cdotn SU(2)) \\
&=&& \Pi_{S^1}(A R_{p,q})\\
&=&&(\ \Pi_{SU(2)}(A R_{p,q}) ,\ \Pi_\Borel(A R_{p,q})\ ) \\
&=&& (\ \Pi_{SU(2)}(A) ,\ \Pi_\Borel(A R_{p,q})\ ).
\end{aligned}
$$
Thus we only need to check the second component in equation \eqref{PiAp}.

For the right action of the subgroup $\Borel$ on $SL(2,\C)$  and  $\Pi_\Borel(A) = [a,c]\in \CP^1$ from equation \eqref{defPiB}. The second component in the left side of equation \eqref{PiAp}
 is then computed to be
 $$
\Pi_\Borel(A R_{p,q}) = \Pi_\Borel\left(   \frac{1}{\sqrt{|p|^2+|q|^2}} 
\left[ \begin{matrix} a & b \\ c & d \end{matrix} \right]
 \left[ \begin{matrix} q & \bar p \\ -p & \bar q \end{matrix} \right] \right)=
 \frac{1}{\sqrt{|p|^2+|q|^2}} [ aq-bp, cq-dp] 
 $$
while the second term in the right side of equation \eqref{PiAp} is
$$
\iota([p,q] A^{-1})=\iota( [dp-cq, -bp+aq]) = [aq-bp, cq-dp].
$$
Equation \eqref{PiAp} is now verified.
\end{proof}

\section{The EDS $\sysH$}\label{A2}


\newcommand{\Gr}{\mathrm{Gr}}
\newcommand{\pir}{\mbox{\raisebox{.3ex}{$\pi$}}} 
\newcommand{\Cnon}{\C^3_0}  
\newcommand{\vxi}{\mathbf{\xi}}  
\newcommand{\Np}{\widecheck{N}} 
\newcommand{\sysC}{\mathcal C}   
\newcommand{\sysK}{\mathcal K}

Here we derive the system $\sysH$ from \eqref{Hdef} whose integral surfaces
are equivalent to the holomorphic null curves in $SL(2,\C)$ used in \cite{BryantCMC} to obtain CMC-1 immersions in $\HT$.  (Recall that such curves are related to lifts of integral surfaces for $\sysM$ by modification using the $SU(2)$-valued solution of 
an equation of Lie type.)  

We begin by defining trivializations of the holomorphic
tangent bundle $T_{1,0}SL(2,\C)$ and its associated Grassmann bundle $\Gr_1 (SL(2,\C))$ of 1-dimensional complex subspaces.  Let 
$${\bq}_{\Cm} : T_{1,0}SL(2,\C)\backslash \{ {\bf 0}\} \to \Gr_1 (SL(2,\C)),$$ 
where $\bf 0$ denotes the zero section, be the fiber-wise projectivization map.
The left-invariant vector fields $Z_i$ on $SL(2,\C)$, given in local coordinates in \eqref{defZ}, allow us to define a trivialization $\tau:T_{1,0} SL(2,\C) \to SL(2,\C) \times \C^3$ such that
\begin{equation*}
\tau(  \xi^1Z_1+\xi^2 Z_2 + Z_3 \xi^3 \vert_A) = (A; ( \xi^1,\xi^2,\xi^3 ) ), 
\qquad A \in SL(2,\C)\ .
\end{equation*}
This in turn induces a trivialization $\mu:\Gr_1 (SL(2,\C))  \to  SL(2,\C) \times \CP^2$ such that 
$$
 \mu \circ \bq_{\Cm}(  \xi^1Z_1+\xi^2 Z_2 + Z_3 \xi^3 \vert_A) = (A ; [\xi^1,\xi^2,\xi^3]),  
 $$
where $[\xi^1,\xi^2,\xi^3]$ denote homogeneous coordinates on $\CP^2$.

Let $\Cnon=\C^3\backslash\{(0,0,0)\}$. From here on we use $\tau$ to identify systems and group actions on  $T_{1,0}SL(2,\C)\backslash \{ {\bf 0}\}$ with those on $SL(2,\C)\times \Cnon$, and similarly use $\mu$ to identify
objects on $\Gr_1(SL(2,\C))$ with those on $SL(2,\C) \times \CP^2$.
For example, the action of $SL(2,\C)$ on $T_{1,0}SL(2,\C)$ by push-forward under
right-multiplication induces an action on $SL(2,\C)\times \C^3$ for which 
$\tau$ is equivariant.  Letting $\Lambda(C)$ for $C\in SL(2,\C)$ denote the matrix representing
the $\C$-linear extension of the adjoint representation of $C^{-1}$, that is of $\operatorname{Ad}_{C^{-1}} $, on $T_{1,0}SL(2,\C)$ with respect to the basis $Z_i$ at the identity, the latter action is 
$$
(A;\vxi) \cdot C  = (A C; \vxi \Lambda(C)^t   )\ , \quad \vxi = (\xi^1, \xi^2, \xi^3).
$$
Each action commutes with projectivization, so there are induced (right) actions of $SL(2,\C)$ on $\Gr_1(SL(2,\C))$ and $SL(2,\C) \times \CP^2$ for which $\mu$ is equivariant.

Let $\widetilde N \subset T_{1,0}SL(2,\C)$ be the 5-dimensional complex manifold defined by  
$$
\widetilde N=\{ Z\in T_{1,0}SL(2,\C)  \mid \kappa(Z,Z) = 0,  Z\ne 0\},
$$ 
where $\kappa$ is the Killing form, extended $\C$-linearly and by left-translation to each tangent space on $SL(2,\C)$.   We will in fact work more with its image under
the trivialization,
\begin{equation*}
N = \mu(\widetilde N) = \{ (A; \vxi) 
\mid 
(\xi^1)^2+\xi^2 \xi^3 = 0,  \vxi \neq 0 \}
\subset SL(2,\C) \times \Cnon.
\end{equation*}
It is easy to verify that the holomorphic null curves of \cite{BryantCMC} are exactly 
those regular immersions $s: \C \to SL(2,\C)$ satisfying $s_* T_{(1,0)}\C \subset \widetilde N$
(e.g., see the definition before Theorem A in \cite{BryantCMC}).
We will call these {\defem null curves in $SL(2,\C)$}.

As $N$ is invariant under the scaling action $(A, \vxi) \cdot a = (A, a \vxi)$
for $a\in \Cm$, it has a well-defined projection $\Np = \pir_{\Cm}(N) \subset SL(2,\C) \times \CP^2$ where $\pir_{\Cm}: SL(2,\C) \times \C^3 \to SL(2,\C) \times \CP^2$. This is a 4-dimensional complex manifold which is $SL(2,\C)$- invariant (with respect to the right diagonal action in equation \eqref{SL2action}).  We use the Segre embedding to define a holomorphic diffeomorphism
$\Psi: SL(2,\C) \times \CP^1 \to \Np$ given by
\begin{equation}
\Psi(A; [p,q]) =  (A; [pq  , q^2, -p^2] ) 
\label{deftoR}
\end{equation}
where $[p,q]$ are homogeneous coordinates on $\CP^1$.  This satisfies the equivariance condition
\begin{equation*}
\Psi(A C ; [p,q] C ) = \pir_{\Cm} (A C; (pq  , q^2, -p^2) \Lambda(C)^t  ), \quad C\in SL(2,\C).
\end{equation*}
The action on the left hand side is the diagonal free right action of $SL(2,\C )$ on $SL(2,\C) \times \CP^1$ given in equation \eqref{SL2action}.

Let $\tilde \sysC \subset T^*_{(1,0)} \Gr_1(SL(2,\C))$ be the canonical contact system\footnote{At a point $E \in \Gr_1(SL(2,\C))$ this system is spanned by the pullbacks from $SL(2,\C)$ of $(1,0)$-forms annihilating $E$.}, which is a holomorphic Pfaffian system of rank 2, 
and let $\tilde \sysK =\bq_{\Cm}^* \tilde \sysC$ be the rank 2 Pfaffian system on $T_{1,0}SL(2,\C)$.
The integral curves of the latter system which are transverse to the projection $\pi:T_{1,0}SL(2,\C) \to SL(2,\C)$
 are lifts by differentiation of immersed holomorphic curves into $SL(2,\C)$; in other words, given a holomorphic $SL(2,\C)$-valued function$A(z)\in SL(2,\C) $ for $z\in \C$, then
$dA/dz$ gives an integral curve of $\tilde \sysK$.
Let $\sysC$ and $\sysK$ be the corresponding rank 2 systems on $SL(2,\C) \times \CP^2$ 
and $SL(2,\C) \times \Cnon$ under trivializations $\mu$ and $\tau$ respectively.
Then with coordinates $( \xi^1, \xi^2, \xi^3)$ on $\Cnon$, 
\begin{equation}
\sysK = \{ \xi^1 \alpha^2 - \xi ^2 \alpha^1,\ \xi^1\alpha^3 - \xi^3 \alpha^1,\ \xi^2 \alpha^3  - \xi^3 \alpha^2\ \}_\diff.
\label{EDSK}
\end{equation}
(Of the three generators listed, exactly two are linearly independent at each point.)

Integral curves of $\sysK$ lying in $N$ which are transverse to $\pi:N \to SL(2,\C)$ correspond under $\tau$ to lifts 
of null curves in $SL(2,\C)$. 
Using the diffeomorphism $\Psi:SL(2,\C) \times \CP^1 \to \Np$,
we define $\sysH=\Psi^* \sysC$, a holomorphic rank 2 Pfaffian system on $SL(2,\C) \times \CP^1$.  
Using equations \eqref{deftoR} and \eqref{EDSK} we compute that
\begin{equation}
\sysH = \{  \ p q \alpha^2 - q^2 \alpha^1, \  pq \alpha^3 +p^2 \alpha^1,\  q^2 \alpha^3 +p^2 \alpha^2\ \}_\diff
\label{Idef}
\end{equation}
where again $[p,q]$ are homogeneous coordinates on $\CP^1$.
Since $\sysC$, $\sysK$ and $N$ are $SL(2,\C)$-invariant and $\Psi$ is equivariant,
then $\sysH$ is invariant under the diagonal right action defined in \eqref{SL2action}.  

We now summarize the above discussion for use in Section 4 as follows.

\begin{thm} \label{defsysH}   Let   $\sysH$ be the holomorphic rank 2 Pfaffian system on $SL(2,\C) \times \CP^1$ given by
\begin{equation*}
\sysH = \{  \ p q \alpha^2 - q^2 \alpha^1, \  pq \alpha^3 +p^2 \alpha^1,\  q^2 \alpha^3 +p^2 \alpha^2\ \} _\diff
\end{equation*}
which is $SL(2,\C)$ invariant under the right diagonal action in equation \eqref{SL2action} where $[p,q]$ are homogeneous coordinates on $\CP^1$. If  $\, s:\C \to SL(2,\C)\times \CP^1$ is a holomorphic integral curve of $\sysH$ which is transverse to $\pi: SL(2,\C) \times \CP^1 \to SL(2,\C)$, then $\pi \circ s:\C \to SL(2,
\C)$ is a holomorphic null curve. Conversely every holomorphic null curve $s: \C \to SL(2,\C)$ gives rise to an integral manifold of $\sysH$ transverse to $\pi$ given by $\Psi^{-1}\circ \pir_{\Cm}\circ \tau (\frac{ds}{dz})$. 
\end{thm}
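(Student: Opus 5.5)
The proof will go through the diffeomorphism $\Psi$ in \eqref{deftoR}. Everything reduces to three facts: $\Psi$ is an equivariant biholomorphism onto $\Np$; $\sysC$ is the contact system of $\Gr_1(SL(2,\C))$ written in the trivialization $\mu$; and the null condition in the $Z_i$ basis is the conic $(\xi^1)^2+\xi^2\xi^3=0$.

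\emph{Step 1: the null cone and the Segre map.} Write a tangent vector as $\xi^1 Z_1+\xi^2Z_2+\xi^3Z_3$, which corresponds to $\xi^1E_1+\xi^2E_2+\xi^3E_3$ at the identity. Then
\[
\operatorname{tr}\bigl((\xi^1E_1+\xi^2E_2+\xi^3E_3)^2\bigr)=2\bigl((\xi^1)^2+\xi^2\xi^3\bigr),
\]
so $\kappa(Z,Z)$ is a nonzero multiple of $(\xi^1)^2+\xi^2\xi^3$. This justifies the description of $N$. The map $[p,q]\mapsto[pq,q^2,-p^2]$ is the standard conic (Veronese) parametrization of $\{(\xi^1)^2+\xi^2\xi^3=0\}\subset\CP^2$. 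Indeed $(pq)^2+q^2(-p^2)=0$, and it has the inverse $[p,q]=[\xi^1,\xi^2]=[-\xi^3,\xi^1]$ on the appropriate charts. Hence $\Psi$ is a biholomorphism onto $\Np$.

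\emph{Step 2: invariance and the generators.} The adjoint action preserves $\kappa$, so $N$ and $\Np$ are invariant. The contact system $\tilde\sysC$ is natural under the right-multiplication diffeomorphisms, so $\sysC$ and $\sysK$ are invariant too. Equivariance of $\Psi$ then comes down to checking that $[(pq,q^2,-p^2)\Lambda(C)^t]$ equals the Segre image of $[p,q]C$. It suffices to verify this on the generators $E_i$ infinitesimally, which amounts to checking the equivariance of $\operatorname{Sym}^2$ of the standard representation. Invariance of $\sysH=\Psi^*\sysC$ then follows.

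For the explicit form \eqref{EDSK}: at $(A;\vxi)$ a $(1,0)$-form $c_i\alpha^i$ annihilates the line spanned by $\xi^jZ_j$ exactly when $c_i\xi^i=0$. The space of such forms is spanned by the $2\times2$ minors $\xi^i\alpha^j-\xi^j\alpha^i$, which gives \eqref{EDSK}. Substituting $\vxi=(pq,q^2,-p^2)$ gives \eqref{Idef}.

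\emph{Step 3: integral curves.} Let $s(z)=(A(z),[p(z),q(z)])$ be an integral curve transverse to $\pi$, so $A'(z)\neq0$. Then $\Psi\circ s$ is an integral curve of $\sysC$. Vanishing of the minors on $s'$ says that $A^{-1}A'=\alpha^i(s')E_i$ is proportional to $\vxi=(pq,q^2,-p^2)$. By Step 1 this vector is null, so $A'$ is null and $\pi\circ s$ is a null curve.

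Conversely, let $s$ be a holomorphic null curve. Then $\tau(ds/dz)\in N$, and its projectivization lies in $\Np$ and is the tautological lift of $s$ to $\Gr_1$, hence an integral curve of $\sysC$. Applying $\Psi^{-1}$ gives an integral curve of $\sysH$. Its projection to $SL(2,\C)$ is $s$ itself, so it is transverse to $\pi$.

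The main obstacle I expect is the equivariance check for $\Psi$ against $\Lambda(C)^t$, since sign and ordering conventions of the $E_i$ basis and the adjoint action matter there. I would verify it infinitesimally on the $P_i$ and $Z_i$, as in \eqref{defchi}, rather than for general $C$.
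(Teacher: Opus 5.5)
Your proposal is correct and follows essentially the paper's own route: identify the null cone in the $Z_i$-trivialization with the conic $(\xi^1)^2+\xi^2\xi^3=0$, parametrize it by the Segre map $\Psi$, write the contact system via the $2\times 2$ minors $\xi^i\alpha^j-\xi^j\alpha^i$ and substitute $\xi=(pq,q^2,-p^2)$ to obtain \eqref{Idef}, and then match transverse integral curves with tautological lifts of null curves. The equivariance you flag as the main obstacle, which the paper only asserts, is in fact immediate: $pqE_1+q^2E_2-p^2E_3=(q,-p)^T(p,q)$, and since $C^TJC=J$ for $C\in SL(2,\C)$ with $J=\bigl(\begin{smallmatrix}0&1\\-1&0\end{smallmatrix}\bigr)$, one has $C^{-1}(q,-p)^T=J\,C^T(p,q)^T$, so $C^{-1}XC$ is exactly the Segre image of $[p,q]C$.
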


\section{The Vessiot coframe}\label{A3}
\newcommand{\ditto}{\ldots}

In this section we will show that the Vessiot algebra of the Darboux-integrable system $\sysJ$ is $\fsu(2)$.
We use the methods of \S4 in \cite{FelsIvey}, and in order to do so, we will represent the
quotient  of $\sysJ$  in equation \eqref{MAonSL2J} on $SL(2,\C)\times \C$ by the action of the characteristic symmetry given by the $S^1$-action in equation \eqref{S14A} by
restricting the system $\sysI$ to the  cross-section $SL(2,\C)\times \R \subset SL(2,\C)\times \C$. We will also restrict our attention to an open set of $SL(2,\C)$ where we can use coordinates.  Accordingly, let
$$\scrU = \{ (A,\np) \in SL(2,\C) \times \C \mid a \ne 0, \np \in \R^+ \}.$$
On the 7-dimensional manifold $\scrU$ we use as coordinates $a,b,c$ and their conjugates as well as the
positive real coordinate $\np$.

Let $\iota$ denote the inclusion of $\scrU$ into $SL(2,C) \times \C$, and let
$\sysJU = \iota^* \sysJ$ be the restriction.
By computing the values of the left-invariant forms $\alpha^1, \alpha^2,\alpha^3$ in our coordinates we obtain from equation \eqref{MAonSL2J}
$$\sysJU=\{ \Re(d \rd{a}-b\rd{c}), 
\Re\left( 2(c\rd{a}-a\rd{c}) - \np\left(\dfrac{\rd{b}}{a} +\dfrac{b}{a}(b \rd{c}-d \rd{a}) + \bar{a}\rd{\bar{c}}-\bar{c}\rd{\bar{a}}\right)\right), 
\Im\left({\ditto} \right) \}_\alg,$$
where $d=(bc+1)/a$ and $\ditto$ indicates the same 1-form as in the previous expression. In addition we pull back the singular bundle $\widehat R$ and from \eqref{defhatRinf} compute that
$$
\iota^* \prRinf = \spanC \{ c\rd{a}-a\rd{c}, a\rd{\np}-4\np\rd{a}\},
$$
showing that $c/a$ and $\np/a^4$ are Darboux invariants of $\sysJU$.  It is usually advantageous to
include the invariants as coordinates; in order to solve for $a,c$ in terms of the invariants, let
$\np=k^4$ for $k\in \R^+$, and introduce Darboux invariant coordinates
$$w=c/a, \qquad q=k/a.$$

The Vessiot algebra is determined by constructing an adapted complex coframe on $\scrU$; the coframe is not unique, and possibly only local, but the real Lie algebra determined by the structure equations of the coframe is unique up to isomorphism.  To begin the construction, we first select an adapted coframe using a basis of generators for $\sysJ \otimes \C$ (in fact, just using the above generators expressed in terms of the new coordinates) along with the differentials of the Darboux invariants and their conjugates:
\begin{multline*}
( \theta^1_0=\rd\log(k/q)-bk/q \rd{w} +\rd\log(k/\qbar) -\bbar k/\qbar \rd{\wbar}, \\
\theta^2_0 = k(b\rd{q}-q\rd{b})-bq\rd{k}+(b^2 k^2 -2q^{-2})\rd{w} - k^4 \qbar^{-2}\rd{\wbar},
\theta^3_0=\overline{\theta^2_0}, \pi^1=\rd{w}, \pi^2=\rd{q}, \overline{\pi^1}, \overline{\pi^2}).
\end{multline*}
In Step 1, iterated brackets of are taken of the duals of $\pi^1,\pi^2$ to obtain a new frame satisfying structure equations with fewer nonzero coefficients.   The corresponding coframe has
$$
\theta^1_1 = \tfrac14 q^4 k b \theta^1_0 + \tfrac18 q^3 \theta^2_0 + (\tfrac14 q^3 k^{-4} - \tfrac18 q^5 k^{-2} b^2)\theta^3_0, 
\theta^2_1 = \tfrac18 q^5( -\theta^1_0 + q b k^{-3}\theta^3_0),
\theta^3_1 = -\tfrac1{16}q^7 k^{-4} \theta^3_0,
$$
with $\pi^1, \pi^2, \overline{\pi^1}, \overline{\pi^2}$ unchanged from now on.
In Step 2 we choose an `origin' $o$, in this case defined by $k=2^{1/4}$, 
$q=2$ and $b=w=0$, and make a constant-coefficient change of coframe to ensure
that the $\theta$'s are pure imaginary at $o$:
$$\theta^1_2 = \ri \theta^1_1, \quad \theta^2_2=\ri \theta^2_1, \quad \theta^3_2 = \theta^1_1 + \tfrac12 \theta^3_1.$$
Because the complex span of the $\theta$'s is closed under conjugation, there is a matrix $P$ of functions such that $\theta^i = P^i_j \overline{\theta^j}$ where $1\le i,j \le 3$.  In Step 3, we obtain a matrix $K$ by setting the coordinates
$b,\overline{b}, \overline{q}, \overline{w}$ and $k$ in the entries of $P$ equal to their values at the origin (but with $K$ still depending on $q$ and $w$), and letting $\theta^i_3 = K^i_j \theta^j_2$ .  The resulting coframe satisfies the structure equations
\begin{equation*}
\begin{aligned} 
d\theta^1 &= \tfrac12 \theta^2 \wedge \theta^3  + \dfrac{2\ri}q \pi^2 \wedge \theta^2  + \dfrac{8\ri}{q^3} \pi^1 \wedge \pi^2 -\dfrac{8\ri (b^2 k^2 q^2-2)}{k^4 \qbar^3} \pibars{1} \wedge \pibars{2},\\
d\theta^2 &= \theta^3 \wedge \theta^1 + \dfrac{4\ri}{q^2} \pi^1 \wedge \theta^3
+ \dfrac{32\ri b q}{k^3 \qbar^3} \pibars{1} \wedge \pibars{2}, \\
d\theta^3 &= \tfrac12 \theta^1 \wedge \theta^2 -\dfrac{2\ri}{q^2} \pi^1 \wedge \theta^2 -\dfrac{2\ri}{q} \pi^2 \wedge \theta^1 + \dfrac{8}{q^3} \pi^1 \wedge\pi^2 -8\dfrac{b^2 k^2 q^2 +2}{\qbar^3 k^4}\pibars{1} \wedge \pibars{2},
\end{aligned}
\end{equation*}
where for convenience we have dropped the subscript 3 on the $\theta$'s.
The real constants $C^i_{jk}$ such that $d\theta^i \equiv \tfrac12 C^i_{jk} \theta^j \wedge \theta^k$ modulo $\pi^a$, $\pibars{a}$ are the structure constants of the Vessiot algebra, which in this case is readily identifiable as $\fsu(2)$.

%% file: biblio.tex
